\newtheorem{theorem}{Theorem}[section]
\newtheorem{proposition}[theorem]{Proposition}
\newtheorem{lemma}[theorem]{Lemma}
\newtheorem{corollary}[theorem]{Corollary}
\theoremstyle{definition}
\newtheorem{definition}[theorem]{Definition}
\newtheorem{example}[theorem]{Example}
\theoremstyle{remark}
\newtheorem*{remark}{Remark}
\numberwithin{equation}{section}
\newcommand{\C}{\mbox{$\mathbb{C}$}}
\newcommand{\N}{\mbox{$\mathbb{N}$}}
\newcommand{\E}{\mbox{$\mathcal{E}$}}
\newcommand{\Ep}{\mbox{$\mathcal{E}_{p}$}}
\newcommand{\MM}{\mbox{$\mathcal{M}_{p,m}$}}
\newcommand{\EE}{\operatorname{E}_{w}}
\newcommand{\PP}{\operatorname{P}}
\newcommand{\be}{\beta^{n-m}}
\newcommand{\dd}{{\bf d}}
\newcommand{\HH}{\operatorname{H}_{m}}
\newcommand{\Jp}{\operatorname{J}_p}
\newcommand{\Ip}{\operatorname{I}_p}
\begin{document}

\author{Per \AA hag}
\address{Department of Mathematics and Mathematical Statistics\\ Ume\aa \ University\\ SE-901 87 Ume\aa \\ Sweden}
\email{Per.Ahag@math.umu.se}
\author{Rafa\l\ Czy{\.z}}
\address{Institute of Mathematics \\ Faculty of Mathematics and Computer Science \\ Jagiellonian University\\ \L ojasiewicza 6\\ 30-348 Krak\'ow\\ Poland}
\email{Rafal.Czyz@im.uj.edu.pl}\thanks{The second-named author was supported by the Priority Research Area SciMat under the program Excellence Initiative - Research University at the Jagiellonian University in Krak\'ow.}

\dedicatory{We raise our cups to Urban Cegrell, gone but not forgotten, gone but ever here. \\ Until we meet again in Valhalla!}

\keywords{Caffarelli-Nirenberg-Spruck model, Cegrell class, complex Hessian operator, $m$-subharmonic function, quasimetric space, stability}
\subjclass[2020]{Primary 32U05, 31C45; Secondary 31E05,46E36}

\title[Quasimetric spaces in generalized potential theory]{On a family of quasimetric spaces in generalized potential theory}

\begin{abstract}
We construct a family of quasimetric spaces in generalized potential theory containing $m$-subharmonic functions with finite $ (p,m)$-energy. These quasimetric spaces will be viewed both in $\C^n$ and in compact K\"ahler manifolds, and their convergence will be used to improve known stability results for the complex Hessian equations.
\end{abstract}

\maketitle

\begin{center}\bf
\today
\end{center}

\section{Introduction}

Although the abstract metric spaces introduced by Fr\'{e}chet at the beginning of the last century are of the utmost importance, in some applications, they are too restrictive and need a more general model. To name a few examples: the Minkowski $p$-distance in psychology ($p<1$)~\cite{JakelScholkopfWichmann,shepard}, the Zolotarev distance in spaces of random variables~\cite{Zolotarev} and the $d_{\epsilon}$-distance in machine learning~\cite{CalabuigFalcianiSancez}. The terminology has not yet stabilized within the many generalizations of metric spaces, and therefore let us determine which one we will work with here. Let $X$ be a non-empty set, and let $d:X\times X\to [0,\infty)$ be a function that satisfies:

\medskip

\begin{enumerate}\itemsep2mm
\item $d(x,y)=0$ if, and only if, $x=y$;

\item $d(x,y)=d(y,x)$, for all $x,y\in X$;

\item there exists a constant $C\geq 1$ such that
\[
d(x,y)\leq C(d(x,z)+d(z,y))
\]
for all $x,y,x\in X$.
\end{enumerate}
Following for example Heinonen~\cite{H}, we shall call $d$ for a \emph{quasimetric}, and the pair $(X,d)$ for a \emph{quasimetric space}. Some writers call this instead for a nearmetric
or inframetric. Next, let us define our specific $X$ and then construct $d$.

Let $n\geq 2$ and $1\leq m\leq n$. We say that a $\mathcal C^2$-function $u$ defined in a bounded domain in $\mathbb C^n$ is \emph{$m$-subharmonic} if the elementary symmetric functions are positive $\sigma_l(\lambda(u))\geq 0$  for $l=1,\dots,m$, where $\lambda(u)=(\lambda_1,\ldots,\lambda_n)$ are eigenvalues of the complex Hessian matrix $D_{\mathbb{C}}^2u=[\frac {\partial ^2u}{\partial z_j\partial \bar z_k}]$. The complex $m$-Hessian operator on a $\mathcal C^2$-function $u$ is then defined by
\[
\operatorname H_m(u)=c(n,m)\sigma_m(\lambda(D_{\mathbb{C}}^2u)),
\]
for some constant $c(n,m)$ depending only on $n$ and $m$.

This construction yields that the $1$-Hessian operator is the Laplace operator defined on $1$-subharmonic functions that
are just the subharmonic functions, while the complex $n$-Hessian operator is the complex Monge-Amp\`{e}re operator defined on $n$-subharmonic functions that are the plurisubharmonic functions. Historically this model goes back to Caffarelli, Nirenberg, and Spruck~\cite{CNS85} in 1985, where they did a similar construction for the real Hessian matrix. Vinacua, a student of Nirenberg, was one of those who adapted the idea of the Hessian operator to the complex setting (\cite{vinacua1,vinacua2}) that we shall use here. Later in 2005, B\l ocki~\cite{Blocki} introduced pluripotential methods to the theory of complex Hessian operators, and there he, among other things,  generalized the complex Hessian operator to non-smooth $m$-subharmonic functions. For $p>0$, set
\[
e_{p,m}(u)=\int_{\Omega} (-u)^p \operatorname H_m(u),
\]
and we shall call $e_{p,m}(u)$ for the \emph{$(p,m)$-energy of the function $u$}. Let $\mathcal E_{p,m}(\Omega)$ be the class of $m$-subharmonic functions that, in a general sense, vanishes on the boundary and additionally they should have finite $(p,m)$-energy. The classes $\mathcal E_{p,m}(\Omega)$ are sometime known as the Cegrell's generalized energy classes, after Cegrell's influential work~\cite{cegrell_pc} on $\mathcal E_{p,n}(\Omega)$. For the early work on the theory of variation for the complex $n$-Hessian operator see e.g.~\cite{BedfordTaylor78,BedfordTaylor79,ChernLevineNirenberg,Gaveau1,Gaveau2,Kalina}. On the other hand, if $m=1$, and $p=1$, then $e_{1,1}$ is the Dirichlet energy integral from potential theory connected to a long and fruitful history.

Set $X=\mathcal E_{p,m}(\Omega)$, and let $\Jp:X\times X\to [0,\infty)$ be defined by
\[
\Jp(u,v)=\left(\int|u-v|^p(\HH(u)+\HH(v))\right)^{\frac 1{p+m}}.
\]
In Theorem~\ref{thm_quasimetric}, we prove that $(X,\Jp)$ is a quasimetric space in the above sense, and in Theorem~\ref{comp} we
prove that it is complete. Later in Section~\ref{sec_kah} we shall consider the compact K\"ahler manifold case, and in Theorem~\ref{thm_quasimetric}, and Theorem~\ref{thmKah_comp}, we shall prove that the corresponding construction is a complete quasimetric space. Guedj, Lu, and Zeriahi~\cite[Theorem~1.6]{GLZ} proved the quasi-triangle inequality in the case $m=n$, in the compact K\"ahler manifold setting (see also~\cite[Theorem~1.8]{BBEGZ19}, and~\cite{NessaLu}).

\medskip

In Section 3, we will use the complete quasimetric space $(X,\Jp)$ in $\mathbb C^n$ to prove the following stability results for the complex
Hessian operators. First, let
\[
\begin{aligned}
\MM=\big\{&\mu \; :\;  \mu \text { is a non-negative Radon measure on $\Omega$ such that }\\
 & \HH(u)=\mu \text{ for some } u\in \mathcal E_{p,m}(\Omega)\big \},
\end{aligned}
\]
and then recall the following characterization of $\MM$:

\medskip

\begin{enumerate}\itemsep2mm
\item $\mu\in \MM$;
\item there exists a constant $A\geq 0$ such that
\[
\int_{\Omega}(-u)^p\,d\mu\leq A\, e_{p,m}(u)^{\frac{p}{p+m}}\quad \text{ for all } u\in\mathcal E_{p,m}(\Omega)\, ;
\]
\item $\mathcal E_{p,m}(\Omega)\subset L^p(\mu)$;

\item there exists unique function $U(\mu)\in \mathcal E_{p,m}(\Omega)$ the solution to the Dirichlet problem for the complex Hessian equation $\HH(U(\mu))=\mu$.

\end{enumerate}
(see~\cite{cegrell_pc,L3} for details). Let $\mu\in \MM$, then in Theorem~\ref{thm_stability}, we prove that if
$0\leq f, f_j\leq 1$ are measurable functions such that $f_j\to f$ in $L^1_{loc}(\mu)$, as $j\to \infty$, then $\Jp(U(f_j\mu), U(f\mu))\to 0$, $j\to \infty$. By Proposition~\ref{cap}
we know that convergence in $(X,\Jp)$ implies convergence in capacity, but by Example~\ref{conv_example} we have that the converse statement is false. Hence, Theorem~\ref{thm_stability}
is a generalization of~\cite[Theorem~7.2]{thien2}. Note that this also implies improved results in the pluricomplex case, $m=n$, and therefore Theorem~\ref{thm_stability} also generalizes the stability result by Cegrell and Ko\l odziej~\cite{CK}. For further information about these types of stability results in the case $m=n$ we refer to~\cite[Section~7.2]{C}.

\bigskip

\section{Preliminaries}\label{sec_prelim}

Here, we shall present some crucial and necessary facts about $m$-subharmonic functions that shall be used in this manuscript. For further information, see e.g.~\cite{SA,AS2,L}. First, let $n\geq 2$, $1\leq m\leq n$, and let $\Omega$ be a bounded domain in $\C^n$.  Then define $\mathbb C_{(1,1)}$ to be the set of $(1,1)$-forms with constant coefficients, and set
\[
\Gamma_m=\left\{\alpha\in \mathbb C_{(1,1)}: \alpha\wedge \beta^{n-1}\geq 0, \dots , \alpha^m\wedge \beta ^{n-m}\geq 0   \right\}\, ,
\]
where $\beta=dd^c|z|^2$ is the canonical K\"{a}hler form in $\C^n$. We then say that a  subharmonic function $u$ defined on $\Omega$ is \emph{$m$-subharmonic}, if the following inequality holds
\[
dd^cu\wedge\alpha_1\wedge\dots\wedge\alpha_{m-1}\wedge\beta^{n-m}\geq 0\, ,
\]
in the sense of currents for all $\alpha_1,\ldots,\alpha_{m-1}\in \Gamma_m$. Furthermore, we call $\Omega$ for \emph{$m$-hyperconvex} if it admits an exhaustion function $\varphi$  that is negative and $m$-subharmonic, i.e. the closure of the  set $\{z\in\Omega : \varphi(z)<c\}$ is compact in $\Omega$, for every $c\in (-\infty, 0)$. For further information about $m$-hyperconvex domains we refer to~\cite{ACH}.

Let $p>0$. We say that a $m$-subharmonic function $\varphi$ defined on a $m$-hyperconvex domains $\Omega$ belongs to:

\medskip

\begin{itemize}\itemsep2mm
\item[$(i)$] $\E_{0,m}(\Omega)$ if, $\varphi$ is bounded,
\[
\lim_{z\rightarrow\xi} \varphi (z)=0 \quad \text{ for every } \xi\in\partial\Omega\, ,
\]
and
\[
\int_{\Omega} \HH(\varphi)<\infty\, ,
\]
where $\HH(u)=(dd^cu)^m\wedge \be$ is the complex Hessian operator.
\item[$(ii)$]  $\E_{p,m}(\Omega)$ if, there exists a decreasing sequence, $\{u_{j}\}$, $u_{j}\in\E_{0,m}(\Omega)$,
that converges pointwise to $u$ on $\Omega$, as $j$ tends to $\infty$, and
\[
\sup_{j} e_{p,m}(u_j)=\sup_{j}\int_{\Omega}(-u_{j})^p\HH(u_j)< \infty\, .
\]
\end{itemize}

\begin{theorem}\label{thm_holder} Let $n\geq 2$, $1\leq m\leq n$, and assume that $\Omega\subset \mathbb C^n$ is a $m$-hyperconvex domain. There exists a constant $D(m,p)$ (depending only on $p$ and $m$) such that for any $u_0,u_1,\ldots ,
u_m\in\E_{p,m}(\Omega)$ it holds
\begin{multline*}
\int_\Omega (-u_0)^p dd^c u_1\wedge\cdots\wedge dd^c u_m\wedge \be\\ \leq
\; D(m,p) e_{p,m}(u_0)^{\frac {p}{m+p}}e_{p,m}(u_1)^{\frac {1}{m+p}}\cdots
e_{p,m}(u_m)^{\frac {1}{m+p}}\, .
\end{multline*}
\end{theorem}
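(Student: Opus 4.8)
The plan is to establish the inequality first for bounded functions and then to recover the general case by approximation. Concretely, for each $i$ I would pick a decreasing sequence $\{u_i^k\}_k\subset\mathcal E_{0,m}(\Omega)$ converging to $u_i$ with $\sup_k e_{p,m}(u_i^k)<\infty$, which exists by the very definition of $\mathcal E_{p,m}(\Omega)$. Since the mixed Hessian measures $dd^cu_1^k\wedge\cdots\wedge dd^cu_m^k\wedge\beta^{n-m}$ converge weakly$^\ast$ along decreasing sequences, and the weights $(-u_0^k)^p$ increase to $(-u_0)^p$, both sides pass to the limit (monotone convergence together with the convergence theorems for mixed Hessian measures on the left, and continuity of the energies $e_{p,m}(u_i^k)\to e_{p,m}(u_i)$ on the right). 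This reduces everything to $u_0,\dots,u_m\in\mathcal E_{0,m}(\Omega)$, where all functions are bounded, vanish on $\partial\Omega$, and integration by parts is legitimate with no boundary contribution.

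The engine is a Cauchy--Schwarz inequality for mixed Hessian measures with a \emph{fixed} weight. For $w\ge 0$, $m$-subharmonic $u,v$, and a closed positive current $T=dd^c\varphi_1\wedge\cdots\wedge dd^c\varphi_{m-2}\wedge\beta^{n-m}$ with $\varphi_i$ $m$-subharmonic, the symmetric bilinear form $B(u,v)=\int w\,dd^cu\wedge dd^cv\wedge T$ is positive semidefinite, because $B(u,u)=\int w\,(dd^cu)^2\wedge T\ge 0$ by the positivity properties of $m$-subharmonic functions; hence
\[
\int w\,dd^cu\wedge dd^cv\wedge T\le\Big(\int w\,(dd^cu)^2\wedge T\Big)^{1/2}\Big(\int w\,(dd^cv)^2\wedge T\Big)^{1/2}.
\]
Iterating this with $w=(-u_0)^p$ to diagonalize the mixed product gives
\[
\int(-u_0)^p\,dd^cu_1\wedge\cdots\wedge dd^cu_m\wedge\beta^{n-m}\le\prod_{j=1}^m\Big(\int(-u_0)^p\,(dd^cu_j)^m\wedge\beta^{n-m}\Big)^{1/m},
\]
so the whole problem is reduced to the two-function estimate
\[
\int(-u_0)^p\,(dd^cu_j)^m\wedge\beta^{n-m}\le C\,e_{p,m}(u_0)^{\frac p{p+m}}e_{p,m}(u_j)^{\frac m{p+m}}.
\]

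To obtain the two-function estimate I would prove, by induction on $0\le k\le m$, the more flexible bound
\[
\int(-u_0)^p\,(dd^cu_0)^{m-k}\wedge(dd^cu_j)^{k}\wedge\beta^{n-m}\le C_k\,e_{p,m}(u_0)^{\frac{p+m-k}{p+m}}e_{p,m}(u_j)^{\frac{k}{p+m}},
\]
whose case $k=0$ is the definition of $e_{p,m}(u_0)$ and whose case $k=m$ is exactly what is needed. For the inductive step I would integrate by parts to move one factor $dd^cu_j$ onto the weight, using
\[
dd^c(-u_0)^p=-p(-u_0)^{p-1}dd^cu_0+p(p-1)(-u_0)^{p-2}\,du_0\wedge d^cu_0,
\]
and then regroup the resulting weights by H\"older's inequality so as to reproduce $e_{p,m}(u_0)$ and $e_{p,m}(u_j)$ with the claimed exponents, invoking the Cauchy--Schwarz step above to absorb the leftover mixed factors.

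The main obstacle is precisely this inductive step. The integration by parts produces, besides the desired Hessian term, a gradient term of the form $p(p-1)\int u_j(-u_0)^{p-2}\,du_0\wedge d^cu_0\wedge(dd^cu_j)^{m-1}\wedge\beta^{n-m}$, whose sign is favourable when $p\ge 1$ but not when $0<p<1$; controlling it in the latter range requires an auxiliary Cauchy--Schwarz estimate for the gradient currents $du_0\wedge d^cu_0$ against the Hessian masses, together with careful bookkeeping of the H\"older exponents so that the accumulated powers collapse exactly to $\frac p{p+m}$ and $\frac m{p+m}$. Once the two-function estimate holds with a constant $C=C(m,p)$, combining it with the diagonalization above yields the theorem with a constant $D(m,p)$ of the form $C(m,p)^{1/m}$.
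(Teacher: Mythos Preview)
The paper does not actually prove this statement; it simply cites Persson~\cite{persson} together with~\cite{czyz_energy,cegrell_pc,CP}. So there is no in-paper argument to compare against, but your proposal should be measured against those classical proofs.

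Your outline has a genuine gap at the ``Cauchy--Schwarz'' step. The bilinear form $B(u,v)=\int w\,dd^cu\wedge dd^cv\wedge T$ is \emph{not} positive semidefinite: you only know $B(u,u)\ge 0$ for $m$-subharmonic $u$, and the $m$-subharmonic cone is not a vector space, so the usual Cauchy--Schwarz argument (which requires $B(u-tv,u-tv)\ge 0$ for all real $t$) does not go through. Worse, the inequality you write down is simply false. On the unit ball in $\mathbb C^2$ with $m=n=2$, take $u_\epsilon=|z_1|^2+\epsilon|z_2|^2-1$ and $v_\epsilon=\epsilon|z_1|^2+|z_2|^2-1$, both in $\mathcal E_{0,2}$. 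A direct computation gives $(dd^cu_\epsilon)^2=(dd^cv_\epsilon)^2=2\epsilon\,c\,dV$ while $dd^cu_\epsilon\wedge dd^cv_\epsilon=(1+\epsilon^2)\,c\,dV$ for a positive dimensional constant $c$. With any strictly positive weight $w$ (for instance $w=(-u_0)^p$ with $u_0=|z|^2-1$) the left side equals $(1+\epsilon^2)\int w\,c\,dV$ and the right side equals $2\epsilon\int w\,c\,dV$, so your inequality fails for every $0<\epsilon<1$. The same example kills the iterated ``diagonalization'' bound $\int(-u_0)^p\,dd^cu_1\wedge\cdots\wedge dd^cu_m\wedge\beta^{n-m}\le\prod_j\bigl(\int(-u_0)^p(dd^cu_j)^m\wedge\beta^{n-m}\bigr)^{1/m}$, and with it your reduction to a two-function estimate.

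The genuine Cauchy--Schwarz that drives the cited proofs is for the \emph{gradient} pairing $(f,g)\mapsto\int df\wedge d^cg\wedge T$, which \emph{is} positive semidefinite whenever $T$ is a positive closed current. Combined with the integration by parts $\int(-f)\,dd^cg\wedge T=\int df\wedge d^cg\wedge T$ (valid in $\mathcal E_{0,m}$) it yields
\[
\int(-f)\,dd^cg\wedge T\;\le\;\Bigl(\int(-f)\,dd^cf\wedge T\Bigr)^{1/2}\Bigl(\int(-g)\,dd^cg\wedge T\Bigr)^{1/2},
\]
where the weight on the left is \emph{tied} to one of the functions on the right, not a free parameter. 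The classical argument applies this (and, for general $p$, the integration-by-parts identity for $dd^c(-u_0)^p$ that you already wrote down) directly to the full mixed integral, replacing one $dd^cu_j$ at a time and bookkeeping the exponents; there is no separate diagonalization stage. Your instincts in the ``two-function'' paragraph are the right ones, but that machinery has to be run on the mixed integral from the outset rather than after a reduction that does not hold.
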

\begin{proof}
See Theorem~3.4 in~\cite{persson} (see also~\cite{czyz_energy,cegrell_pc,CP}).
\end{proof}

\begin{theorem}\label{thm_prel} Let $n\geq 2$, $1\leq m\leq n$, and assume that $\Omega\subset \mathbb C^n$ is a $m$-hyperconvex domain.
Furthermore, assume that $u,v\in \mathcal E_{p,m}(\Omega)$, and $T$ be a positive closed current. Then it holds:
\begin{enumerate}\itemsep2mm

\item
\[
\int_{\{u<v\}}\HH(v)\leq \int_{\{u<v\}}\HH(u).
\]
\item If $\HH(v)\leq \HH(u)$, then $u\leq v$.

\item  If $\HH(u)(u<v)=0$, then $u\geq v$.

\item $\chi_{\{u<v\}}(dd^c\max(u,v))\wedge T=\chi_{\{u<v\}}(dd^cv)\wedge T$.

\item $\HH(\max(u,v))\geq \chi_{\{u\geq v\}}\HH(u)+ \chi_{\{u<v\}}\HH(v)$.
\end{enumerate}
\end{theorem}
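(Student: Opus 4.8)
The five assertions are the $m$-subharmonic analogues of classical facts of pluripotential theory, and my plan is to prove them by first reducing to bounded functions in $\mathcal{E}_{0,m}(\Omega)$ and then passing to the limit along the decreasing approximating sequences supplied by the definition of $\mathcal{E}_{p,m}(\Omega)$, the necessary uniform control being provided by Theorem~\ref{thm_holder}. I would take the locality statement $(4)$ as the foundation, deduce $(5)$ and the comparison principle $(1)$ from it, and finally obtain the two domination principles $(2)$ and $(3)$ from $(1)$ by perturbing with an exhaustion function.

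First I would prove $(4)$ for bounded $u,v$. On the set $\{u<v\}$ one has $\max(u,v)=v$, so the desired identity is an instance of the locality of the current $dd^c(\cdot)\wedge T$ for the fixed positive closed current $T$. The obstruction is that $\{u<v\}$, being a difference of upper semicontinuous functions, need not be open; I would handle this by writing $\{u<v\}=\bigcup_{\epsilon>0}\{u<v-\epsilon\}$, approximating $u$ and $v$ from above by decreasing sequences of continuous $m$-subharmonic functions (on which the analogous sets are genuinely open), and then invoking the quasicontinuity of $m$-subharmonic functions together with the weak continuity of the Hessian current under decreasing limits to transfer the identity to $u,v$. Iterating $(4)$ with $T$ replaced by the successive powers $\big(dd^c\max(u,v)\big)^{k}\wedge\be$ then yields $\chi_{\{u<v\}}\HH(\max(u,v))=\chi_{\{u<v\}}\HH(v)$, and symmetrically on $\{v<u\}$; combined with the nonnegativity of $\HH(\max(u,v))$ on $\{u=v\}$ this gives the lower bound $(5)$.

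For the comparison principle $(1)$ I would run the Bedford--Taylor integration-by-parts argument on bounded truncations: setting $w=\max(u,v)$ and applying Stokes' formula together with the inequality $(5)$ on the set $\{u<v\}$ gives $\int_{\{u<v\}}\HH(v)\le\int_{\{u<v\}}\HH(u)$, after which the general case in $\mathcal{E}_{p,m}(\Omega)$ follows by monotone approximation. The domination principles are then deduced from $(1)$ via a single perturbation device. Fix a negative $m$-subharmonic exhaustion $\psi$ with $\HH(\psi)$ bounded below by a positive multiple of Lebesgue measure. For $(3)$, applying $(1)$ to the pair $(u,\,v+\delta\psi)$---whose relevant sublevel set lies in $\{u<v\}$, where $\HH(u)$ carries no mass---and using $\HH(v+\delta\psi)\ge\HH(v)+\delta^{m}\HH(\psi)$ forces $\{u<v+\delta\psi\}$ to be Lebesgue-negligible; letting $\delta\to 0$ gives $u\ge v$. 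For $(2)$, applying $(1)$ to $(v,\,u+\epsilon\psi)$ and using the hypothesis $\HH(v)\le\HH(u)$ together with $\HH(u+\epsilon\psi)\ge\HH(u)+\epsilon^{m}\HH(\psi)$ forces $\{v<u+\epsilon\psi\}$ to be negligible, and $\epsilon\to0$ gives $u\le v$.

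I expect the main obstacle to be the locality step $(4)$: since $\{u<v\}$ is not open, the identity of currents cannot simply be read off from locality, and making the approximation rigorous requires the full strength of quasicontinuity and of the convergence of Hessian measures along decreasing sequences, with care taken that the limiting identities hold $\HH$-almost everywhere and not merely weakly. A secondary point requiring attention is the existence, on a general $m$-hyperconvex domain, of the strictly $m$-subharmonic exhaustion $\psi$ used in the perturbation arguments for $(2)$ and $(3)$, and the verification that the binomial lower bounds for $\HH(v+\delta\psi)$ and $\HH(u+\epsilon\psi)$ remain valid for functions of finite $(p,m)$-energy.
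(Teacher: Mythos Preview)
Your sketch follows the standard pluripotential-theoretic route and is essentially correct, but note that the paper does not prove Theorem~\ref{thm_prel} at all: its proof consists of the single line ``See~\cite{L3}.'' So there is no argument in the paper to compare yours against; the authors simply import these five facts from Lu's paper on the variational approach to complex Hessian equations.

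One small imprecision in your outline is worth flagging. In deducing $(5)$ from the iterated locality $(4)$ you write that the inequality on the contact set $\{u=v\}$ follows from ``the nonnegativity of $\HH(\max(u,v))$ on $\{u=v\}$.'' Nonnegativity alone is not enough: the claim in $(5)$ is that $\chi_{\{u\geq v\}}\HH(\max(u,v))\geq \chi_{\{u\geq v\}}\HH(u)$, so on $\{u=v\}$ you must show $\HH(\max(u,v))\geq \HH(u)$, not merely $\HH(\max(u,v))\geq 0$. The standard fix is to apply the locality identity on $\{v-\epsilon<u\}\supset\{v\leq u\}$ (a constant shift keeps $v-\epsilon$ in the class) to get $\chi_{\{v-\epsilon<u\}}\HH(\max(u,v-\epsilon))=\chi_{\{v-\epsilon<u\}}\HH(u)$, and then let $\epsilon\downarrow 0$ using monotone convergence of the indicator and continuity of the Hessian measure along the increasing sequence $\max(u,v-\epsilon)\nearrow\max(u,v)$. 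With that adjustment your plan goes through.
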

\begin{proof} See~\cite{L3}.
\end{proof}

We shall need a comparison principle with weights. Proposition~\ref{cp with weights} will be used in the proof of Proposition~\ref{basic}.

\begin{proposition}\label{cp with weights} Let $n\geq 2$, $1\leq m\leq n$, and assume that $\Omega\subset \mathbb C^n$ is a $m$-hyperconvex domain. Assume that $u,v,w\in \mathcal E_{p,m}(\Omega)$ are such that $w\geq u\geq v$, then
\[
\int_{\Omega}(w-u)^p\HH(u)\leq (\max(p,1)+1)^m\int_{\Omega}(w-v)^p\HH(v).
\]
\end{proposition}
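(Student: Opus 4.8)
The plan is to reduce to bounded functions, fix the weight as $(w-v)^p$, and then transfer the Hessian mass from $u$ to $v$ one $dd^c$-factor at a time by integration by parts. By the definition of $\E_{p,m}(\Omega)$ there are decreasing sequences in $\E_{0,m}(\Omega)$ converging to $u$, $v$, $w$; replacing the approximants of $u$ and $w$ by suitable maxima with the approximant of $v$ one arranges bounded approximants still satisfying $w_j\ge u_j\ge v_j$, and by the convergence theorems for $\HH$ along decreasing sequences both sides converge to the corresponding integrals. Hence it suffices to treat bounded $m$-subharmonic functions vanishing on $\partial\Omega$, for which all integrations by parts below are licit and the boundary terms vanish. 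Moreover, since $w-u\le w-v$ pointwise, $\int_\Omega(w-u)^p\HH(u)\le\int_\Omega(w-v)^p\HH(u)$, so it is enough to bound $\HH(u)$ against the fixed weight $b^p:=(w-v)^p$.

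Writing $b=w-v$, I would prove by induction on $k$ that
\[
\int_\Omega b^p(dd^cu)^k\wedge(dd^cv)^{m-k}\wedge\be\le(\max(p,1)+1)^k\int_\Omega b^p\,\HH(v),\qquad 0\le k\le m,
\]
the case $k=0$ being an equality and $k=m$ the desired inequality. Extracting one factor $dd^cu$, the step from $k-1$ to $k$ reduces to the one-factor estimate
\[
\int_\Omega b^p\,dd^cu\wedge S\le(\max(p,1)+1)\int_\Omega b^p\,dd^cv\wedge S
\]
for the positive closed current $S=(dd^cu)^{k-1}\wedge(dd^cv)^{m-k}\wedge\be$.

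To prove the one-factor estimate, put $h=u-v$, so that $0\le h\le b$. Two integrations by parts give
\[
\int_\Omega b^p\,dd^c(u-v)\wedge S=\int_\Omega h\,dd^c(b^p)\wedge S,\qquad dd^c(b^p)=p\,b^{p-1}dd^cb+p(p-1)\,b^{p-2}db\wedge d^cb,
\]
with $dd^cb=dd^cw-dd^cv$. The Stokes identity $\int_\Omega b^p\,dd^cb\wedge S=-p\int_\Omega b^{p-1}db\wedge d^cb\wedge S\le0$ then yields $\int_\Omega b^p\,dd^cw\wedge S\le\int_\Omega b^p\,dd^cv\wedge S$. Using $h\le b$, the positivity of $dd^cw\wedge S$ and of $db\wedge d^cb\wedge S$, and the elementary bound $p(p-1)\le p^2$, one checks that for $p\ge1$ the nonnegative gradient contribution is absorbed by the negative slack produced by the Stokes identity, so that the right-hand side is at most $p\int_\Omega b^p\,dd^cv\wedge S$; for $p<1$ the term $p(p-1)\,b^{p-2}db\wedge d^cb\wedge S$ already carries the favorable sign and the right-hand side is at most $\int_\Omega b^p\,dd^cv\wedge S$. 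In both cases the difference $\int_\Omega b^p(dd^cu-dd^cv)\wedge S$ is at most $\max(p,1)\int_\Omega b^p\,dd^cv\wedge S$, which is the claim.

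The main obstacle is precisely the absorption of the Dirichlet-type term $db\wedge d^cb\wedge S$ appearing in the expansion of $dd^c(b^p)$: its coefficient $p(p-1)$ changes sign at $p=1$, and it is the dichotomy $p\ge1$ versus $p<1$, combined with the slack from the Stokes identity for $\int_\Omega b^p\,dd^cb\wedge S$, that forces the constant to be exactly $\max(p,1)+1$. A secondary technical point is the order-preserving approximation and the justification of integration by parts for the non-smooth power weight $b^p$, both handled by the standard approximation theory for $\E_{p,m}(\Omega)$.
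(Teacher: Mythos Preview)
Your argument is correct and follows essentially the same route as the paper's proof: replace the weight $(w-u)^p$ by $(w-v)^p$, then peel off one $dd^c$-factor at a time at a cost of $\max(p,1)+1$, handling the sign of the Dirichlet term $db\wedge d^cb$ by the dichotomy $p\ge 1$ versus $p<1$. The only cosmetic difference is the decomposition you choose: you write $dd^cu=dd^cv+dd^c(u-v)$ and integrate by parts against $h=u-v$, whereas the paper writes $dd^cu=dd^cw+dd^c(u-w)$ and integrates by parts against $w-u$; both yield the identical one-factor estimate with the same constant.
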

\begin{proof} Let  $u_1=u-w$, $v_1=v-w$ and $T=(dd^cw+dd^cu_1)^{m-1}\wedge \be$. Then we have
\begin{multline*}
\int_{\Omega}(-v_1)^p(dd^cw+dd^cu_1)\wedge T=\int_{\Omega}(-v_1)^pdd^cw\wedge T+\\
\int_{\Omega}(-v_1)^pdd^cu_1\wedge T=I_1+I_2.
\end{multline*}
Note that for $p\geq 1$
\begin{multline*}
dd^c(-(-v_1)^p)=p(1-p)(-v_1)^{p-2}dv_1\wedge d^cv_1+p(-v_1)^{p-1}dd^cv_1\leq \\
p(-v_1)^{p-1}(dd^cv_1+dd^c w),
\end{multline*}
and for $p<1$
\begin{multline*}
dd^c(-(-v_1)^p)=p(1-p)(-v_1)^{p-2}dv_1\wedge d^cv_1+p(-v_1)^{p-1}dd^cv_1\leq \\
p(1-p)(-v_1)^{p-2}dv_1\wedge d^cv_1+p(-v_1)^{p-1}(dd^cv_1+dd^c w),
\end{multline*}
Then we get
\begin{multline*}
I_1=\int_{\Omega}(-v_1)^pdd^cw\wedge T\leq \int_{\Omega}(-v_1)^pdd^cw\wedge T+\\
p\int_{\Omega}(-v_1)^{p-1}dv_1\wedge d^cv_1\wedge T=\int_{\Omega}(-v_1)^p(dd^cw+dd^cv_1)\wedge T.
\end{multline*}
For $p\geq 1$
\begin{multline*}
I_2=\int_{\Omega}(-v_1)^pdd^cu_1\wedge T= \int_{\Omega}(-u_1)dd^c(-(-v_1)^p)\wedge T\leq\\
p\int_{\Omega}(-u_1)(-v_1)^{p-1}(dd^cv_1+dd^c w)\wedge T\leq p \int_{\Omega}(-v_1)^{p}(dd^cv_1+dd^c w)\wedge T,
\end{multline*}
and for $p<1$
\begin{multline*}
I_2=\int_{\Omega}(-v_1)^pdd^cu_1\wedge T= \int_{\Omega}(-u_1)dd^c(-(-v_1)^p)\wedge T\leq\\
\int_{\Omega}(-u_1)\left(p(1-p)(-v_1)^{p-2}dv_1\wedge d^cv_1+p(-v_1)^{p-1}(dd^cv_1+dd^c w)\right)\wedge T\leq \\
\int_{\Omega}p(1-p)(-v_1)^{p-1}dv_1\wedge d^cv_1\wedge T+p(-v_1)^{p}(dd^cv_1+dd^c w)\wedge T\leq\\
(1-p)\int_{\Omega}(-v_1)^pdd^cv_1\wedge T+p\int_{\Omega}(-v_1)^p(dd^cv_1+dd^c w)\wedge T\leq \\
\int_{\Omega}(-v_1)^{p}(dd^cv_1+dd^c w)\wedge T.
\end{multline*}

Finally, for any $p>0$
\begin{multline*}
\int_{\Omega}(-u_1)^p(dd^cw+dd^cu_1)\wedge T\leq \int_{\Omega}(-v_1)^p(dd^cw+dd^cu_1)\wedge T\leq \\
(\max(p,1)+1)\int_{\Omega}(-v_1)^p(dd^cw+dd^cv_1)\wedge T\leq \dots\leq \\ (\max(p,1)+1)^m\int_{\Omega}(w-v)^p(dd^cv)^m\wedge \be.
\end{multline*}
\end{proof}

We end this section with a Xing type inequality that shall be used in Proposition~\ref{basic}. In Proposition~\ref{cp xing_m}, we shall as well prove the correspondent result in the case of compact K\"ahler manifolds.

\begin{proposition}\label{cp xing} Let $n\geq 2$, $1\leq m\leq n$, and assume that $\Omega\subset \mathbb C^n$ is a $m$-hyperconvex domain, and $u,v\in \mathcal E_{p,m}(\Omega)$.
\begin{enumerate}
\item If $u\leq v$, then
\[
\int_{\Omega}(v-u)^p\HH(v)\leq \int_{\Omega}(v-u)^p\HH(u).
\]
\item Without any additional assumption on $u$, and $v$,  it holds
\[
\int_{\{u<v\}}(v-u)^p\HH(v)\leq \int_{\{u<v\}}(v-u)^p\HH(u).
\]
\end{enumerate}
\end{proposition}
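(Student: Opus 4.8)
The plan is to prove part (1) first and then to obtain part (2) from it by inserting $\max(u,v)$. For part (1), set $w=v-u\geq 0$ and telescope the difference of the two Hessian measures via $dd^cw=dd^cv-dd^cu$:
\[
\HH(v)-\HH(u)=\sum_{k=0}^{m-1}dd^cw\wedge (dd^cv)^k\wedge(dd^cu)^{m-1-k}\wedge\be .
\]
Each form $T_k=(dd^cv)^k\wedge(dd^cu)^{m-1-k}\wedge\be$ is positive and closed of bidegree $(n-1,n-1)$, so it suffices to establish the single building block
\[
\int_\Omega w^p\,dd^cw\wedge T\leq 0
\]
for every $w=v-u\geq 0$ and every positive closed current $T$ of bidegree $(n-1,n-1)$. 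Summing over $k$ then gives $\int_\Omega w^p\HH(v)\leq\int_\Omega w^p\HH(u)$, all integrals being finite since $0\le v-u\le -u$ forces $\int_\Omega(v-u)^p\HH(v)\le\int_\Omega(-u)^p\HH(v)<\infty$ by Theorem~\ref{thm_holder}.

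To prove the building block I would integrate by parts twice, transferring $dd^c$ from $w$ onto $w^p$; the boundary terms vanish because the functions lie in a Cegrell-type class. Using, exactly as in the proof of Proposition~\ref{cp with weights},
\[
dd^c(w^p)=p(p-1)w^{p-2}\,dw\wedge d^cw+pw^{p-1}\,dd^cw ,
\]
one obtains, for $T$ closed,
\[
\int_\Omega w^p\,dd^cw\wedge T=p(p-1)\int_\Omega w^{p-1}\,dw\wedge d^cw\wedge T+p\int_\Omega w^p\,dd^cw\wedge T .
\]
Hence for $p\neq 1$
\[
\int_\Omega w^p\,dd^cw\wedge T=-p\int_\Omega w^{p-1}\,dw\wedge d^cw\wedge T\leq 0 ,
\]
since $w^{p-1}\geq 0$, $dw\wedge d^cw\geq 0$ and $T\geq 0$; the case $p=1$ follows from a single integration by parts giving $-\int_\Omega dw\wedge d^cw\wedge T\le 0$.

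For part (2) I would apply part (1) to the pair $u\leq\max(u,v)$, both of which belong to $\mathcal E_{p,m}(\Omega)$, obtaining
\[
\int_\Omega(\max(u,v)-u)^p\HH(\max(u,v))\leq\int_\Omega(\max(u,v)-u)^p\HH(u).
\]
Since $\max(u,v)-u=v-u$ on $\{u<v\}$ and vanishes on $\{u\geq v\}$, the right-hand side equals $\int_{\{u<v\}}(v-u)^p\HH(u)$. On the left-hand side one invokes Theorem~\ref{thm_prel}(4), iterated $m$ times, to replace the measure of $\max(u,v)$ by that of $v$ on the set in question, i.e. $\chi_{\{u<v\}}\HH(\max(u,v))=\chi_{\{u<v\}}\HH(v)$. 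This turns the left-hand side into $\int_{\{u<v\}}(v-u)^p\HH(v)$ and yields exactly the desired inequality.

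The main obstacle is the rigorous justification of the integration by parts underlying the building block: for general $u,v\in\mathcal E_{p,m}(\Omega)$ the function $w^p$ is neither smooth nor bounded, the factor $w^{p-2}$ is singular on $\{w=0\}$, and the functions only vanish on $\partial\Omega$ in the generalized Cegrell sense. I would first prove the inequality for bounded $u\leq v$, where regularization and the vanishing boundary values make both integrations by parts legitimate and all intermediate integrals finite (finiteness of the mixed gradient terms being controlled through Theorem~\ref{thm_holder}), and then pass to the limit along the decreasing truncations $u_j=\max(u,-j)$ and $v_j=\max(v,-j)$, which remain bounded, preserve the ordering $u_j\le v_j$, decrease to $u,v$, and for which the relevant Hessian integrals converge.
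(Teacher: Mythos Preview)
Your strategy coincides with the paper's: telescope $\HH(v)-\HH(u)$ into a sum of terms $dd^c(v-u)\wedge T_k$, handle each by integration by parts, and deduce (2) from (1) via $\max(u,v)$ together with Theorem~\ref{thm_prel}(4). Two points of execution differ. First, a single integration by parts already suffices: with $\phi>0$ one has
\[
\int_\Omega\phi^p\,dd^c\phi\wedge T=-p\int_\Omega\phi^{p-1}\,d\phi\wedge d^c\phi\wedge T\le0,
\]
so your detour through the double integration by parts, which drags in the more singular factor $w^{p-2}$, is unnecessary. Second, and more to the point, instead of truncating and regularizing, the paper replaces $u$ by $\epsilon u$ with $\epsilon>1$, so that $v-\epsilon u\ge(1-\epsilon)u>0$ is \emph{strictly} positive on $\Omega$; this kills the singularity at $\{v=u\}$ outright, the integration by parts is then unproblematic, one obtains $\int_\Omega(v-\epsilon u)^p\HH(v)\le\epsilon^m\int_\Omega(v-\epsilon u)^p\HH(u)$, and monotone convergence as $\epsilon\to1^+$ finishes. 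Your truncations $u_j=\max(u,-j)$, $v_j=\max(v,-j)$ address unboundedness but leave the set $\{w=0\}$ untouched, which is a genuine issue for $0<p<1$ (where $w^{p-1}$ blows up); augmenting your scheme with the $\epsilon$-perturbation, or an equivalent shift $w\mapsto w+\delta$, closes that gap.
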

\begin{proof}
(1) Let $\epsilon >1$, then $\epsilon u<u\leq v$. We obtain
\begin{multline*}
\int_{\Omega}(v-\epsilon u)^p(\HH(\epsilon u)-\HH(v))=\\
\sum_{k+l=m-1}\int_{\Omega}(v-\epsilon u)^pdd^c(\epsilon u-v)\wedge(dd^c\epsilon u)^k\wedge(dd^cv)^l\wedge \be=\\
p\sum_{k+l=m-1}\int_{\Omega}(v-\epsilon u)^{p-1}d(v-\epsilon u)\wedge d^c(v-\epsilon u)\wedge(dd^c\epsilon u)^k\wedge(dd^cv)^l\wedge \be \\ \geq 0.
\end{multline*}
From this it follows
\[
\int_{\Omega}(v-\epsilon u)^p\HH(v)\leq \epsilon ^m\int_{\Omega}(v-\epsilon u)^p\HH(u),
\]
and then by using the monotone convergence theorem, and finally passing to the limit, $\epsilon \to 1^+$, we arrive at the desired conclusion.

\medskip

\noindent  (2) From (1), and Theorem~\ref{thm_prel}, we obtain
\begin{multline*}
\int_{\{u<v\}}(v-u)^p\HH(v)=\int_{\{u<v\}}(\max(u,v)-u)^p\HH(\max(u,v))=\\
\int_{\Omega}(\max(u,v)-u)^p\HH(\max(u,v))\leq \int_{\Omega}(\max(u,v)-u)^p\HH(u)=\\
\int_{\{u<v\}}(v-u)^p\HH(u).
\end{multline*}
\end{proof}

\section{Quasimetric spaces}\label{sec_quasimetric}

Let $X$ be a non-empty set, and let $d:X\times X\to [0,\infty)$ be a function that satisfies:

\begin{enumerate}\itemsep2mm
\item $d(x,y)=0$ if, and only if, $x=y$;

\item $d(x,y)=d(y,x)$, for all $x,y\in X$;

\item there exists a constant $C\geq 1$ such that
\[
d(x,y)\leq C(d(x,z)+d(z,y))
\]
for all $x,y,x\in X$.
\end{enumerate}

\noindent In this paper we shall call $d$ for a \emph{quasimetric}, and the pair $(X,d)$ for a \emph{quasimetric space}. Recall that every metric is a quasimetric. Furthermore, in every quasimetric space $(X,d)$ there exists a metric $\rho$ with the property that there is an $\epsilon>0$, and a constant $A>0$ such that
\[
A^{-1}d^{\epsilon}\leq \rho\leq Ad^{\epsilon}.
\]

In the next definition we shall define a functional, $\Jp$, in $\mathcal E_{p,m}(\Omega)\times\mathcal E_{p,m}(\Omega)$. After proving some elementary properties of $\Jp$ in Proporition~\ref{basic}, and that $\Jp$ satisfies the quasi-triangle inequality (Lemma~\ref{qm}), we can in Theorem~\ref{thm_quasimetric} conclude that
we have a family of quasimetric spaces. These spaces are complete as shall be shown in Theorem~\ref{comp}.

\begin{definition}\label{def_Jp} Let $n\geq 2$, $1\leq m\leq n$, and assume that $\Omega\subset \mathbb C^n$ is a $m$-hyperconvex domain. For $u,v\in \mathcal E_{p,m}(\Omega)$ and $p>0$ let us define
\[
\Jp(u,v)=\left(\int_{\Omega}|u-v|^p(\HH(u)+\HH(v))\right)^{\frac 1{p+m}}.
\]
\end{definition}

In the next definition, let us recall the rooftop envelope.

\begin{definition} Let $n\geq 2$, $1\leq m\leq n$, and assume that $\Omega\subset \mathbb C^n$ is a $m$-hyperconvex domain. For $u_1,\dots,u_k\in \mathcal E_{p,m}(\Omega)$ define
\[
\PP(u_1,\dots,u_k)=\Big(\sup\{\varphi\in \mathcal E_{p,m}(\Omega): \varphi\leq \min(u_1,\dots,u_k)\}\Big)^*,
\]
where  $(\,)^*$ is the upper semicontinuous regularization.
\end{definition}
\begin{remark}
If $u,v\in \mathcal E_{p,m}(\Omega)$, then $u+v\leq \PP(u,v)$, and therefore we have  $P(u,v)\in \mathcal E_{p,m}(\Omega)$.
\end{remark}

We shall need the following minimum principle. In~\cite[Theorem~4.3]{ACmetric} Theorem~\ref{mp} was proved for the class $\mathcal E_{1,m}(\Omega)$,
but the proof without any change goes over to $\mathcal E_{p,m}(\Omega)$. Therefore we omit the proof here.

\begin{theorem}\label{mp} Let $n\geq 2$, $1\leq m\leq n$, and assume that $\Omega\subset \mathbb C^n$ is a $m$-hyperconvex domain. Let $u,v\in \mathcal E_{p,m}(\Omega)$.
Then the following holds
\begin{equation}\label{min}
\HH(\PP(u,v))\leq \chi_{\{\PP(u,v)=u\}}\HH(u)+\chi_{\{\PP(u,v)=v\}}\HH(v).
\end{equation}
\end{theorem}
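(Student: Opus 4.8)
Write $w=\PP(u,v)$; by the preceding remark $w\in\mathcal E_{p,m}(\Omega)$ and $w\le\min(u,v)$. The plan is to split $\Omega$ into the contact set $\{w=u\}\cup\{w=v\}$ and the non-contact set $N=\{w<u\}\cap\{w<v\}$, to bound $\HH(w)$ on each contact piece by the corresponding Hessian measure, and to show that $\HH(w)$ puts no mass on $N$.

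The contact bound is immediate from Theorem~\ref{thm_prel}(5). Since $w\le u$ we have $\max(w,u)=u$, so applying that inequality to the pair $(w,u)$ gives
\[
\HH(u)=\HH(\max(w,u))\ge \chi_{\{w\ge u\}}\HH(w)+\chi_{\{w<u\}}\HH(u),
\]
and because $\{w\ge u\}=\{w=u\}$, subtracting $\chi_{\{w<u\}}\HH(u)$ yields $\chi_{\{w=u\}}\HH(w)\le\chi_{\{w=u\}}\HH(u)$. The same argument applied to $(w,v)$ gives $\chi_{\{w=v\}}\HH(w)\le\chi_{\{w=v\}}\HH(v)$.

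It remains to prove the orthogonality relation $\HH(w)(N)=0$, and this is where the real content lies. I first observe that $N$ is open: since $u,v,w$ are upper semicontinuous, both $\{w<u\}$ and $\{w<v\}$ are open, hence so is their intersection. On $N$ the envelope $w$ lies strictly below its obstacle, and the idea is that $w$ must then be maximal there, forcing $\HH(w)=0$. Concretely, if $\HH(w)(N)>0$ one finds a ball $B\Subset N$ with $\HH(w)(B)>0$ on which $w<\min(u,v)$ with room to spare; solving the homogeneous Dirichlet problem $\HH(h)=0$ in $B$ with $h=w$ on $\partial B$ and gluing $h$ into $w$ produces an $m$-subharmonic competitor $\tilde w\ge w$ that is still $\le\min(u,v)$ and strictly larger than $w$ on a set of positive measure, contradicting the maximality of $w=\PP(u,v)$.

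Finally, decomposing $\Omega=\{w=u\}\sqcup(\{w=v\}\setminus\{w=u\})\sqcup N$ and using the two contact bounds on the first two pieces together with $\HH(w)(N)=0$ on the last gives inequality~\eqref{min}. The main obstacle is the orthogonality step: the balayage competitor $\tilde w$ must be kept below the obstacle $\min(u,v)$, which is not itself $m$-subharmonic, so one must control the gap $\min(u,v)-w$ on $\bar B$. The clean way around this is to first establish the estimate for bounded data $u,v\in\mathcal E_{0,m}(\Omega)$, where the obstacle problem is classical, and then pass to the limit along decreasing sequences $u_j\downarrow u$, $v_j\downarrow v$ in $\mathcal E_{0,m}(\Omega)$ with $\PP(u_j,v_j)\downarrow \PP(u,v)$.
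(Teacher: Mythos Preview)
The paper does not prove this statement itself; it refers to \cite[Theorem~4.3]{ACmetric} for the case $p=1$ and remarks that the same argument works for general $p$. Your outline is the standard route for such envelope inequalities (and almost certainly the one in \cite{ACmetric}): show that $\HH(w)$ vanishes on the non-contact set $N=\{w<u\}\cap\{w<v\}$ by a balayage/maximality argument, and bound $\HH(w)$ on the contact pieces via Theorem~\ref{thm_prel}(5). The contact-bound step as you wrote it is correct.

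There is, however, a real slip in the orthogonality step. You claim that $\{w<u\}$ and $\{w<v\}$ are open because $u,v,w$ are upper semicontinuous; this is false --- upper semicontinuity of both $w$ and $u$ gives no control on $\{w<u\}$, only on sets of the form $\{w<c\}$ for constants $c$. The balayage then breaks down: keeping the competitor $\tilde w=h$ below $\min(u,v)$ on $\bar B$ requires a \emph{lower} bound on $u$ and $v$ near the chosen point, which upper semicontinuity does not provide. You sense this at the end and propose reducing to $\mathcal E_{0,m}(\Omega)$, but boundedness is not enough either, since functions in $\mathcal E_{0,m}(\Omega)$ need not be continuous. The correct reduction is to \emph{continuous} approximants $u_j,v_j\in\mathcal E_{0,m}(\Omega)\cap\mathcal C(\bar\Omega)$ decreasing to $u,v$ (such sequences exist by \cite[Theorem~5.2]{ACH}, as used in the proof of Theorem~\ref{comp}); then $\min(u_j,v_j)$ is continuous, the non-contact set is genuinely open, and the balayage goes through. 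One then checks $\PP(u_j,v_j)\searrow\PP(u,v)$ and passes the measure inequality to the limit, which needs some care with the characteristic functions but is routine.
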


\begin{proposition}\label{basic} Let $n\geq 2$, $1\leq m\leq n$, and assume that $\Omega\subset \mathbb C^n$ is a $m$-hyperconvex domain. Furthermore assume that  $u,v,w\in \mathcal E_{p,m}(\Omega)$.
\begin{enumerate}\itemsep2mm
\item $\Jp(u,v)<\infty$;

\item $\Jp(u,v)=0$ if, and only if, $u=v$;

\item $\Jp(u,v)=\Jp(v,u)$;

\item $\Jp(u,v)^{p+m}=\Jp(u,\max (u,v))^{p+m}+\Jp(v,\max(u,v))^{p+m}$;

\item \begin{multline*}
\max(\Jp(u,\max (u,v)),\Jp(v,\max(u,v)))\leq \Jp(u,v)\leq \\ \Jp(u,\max (u,v))+\Jp(v,\max(u,v));
\end{multline*}

\item If $u\leq v$, then
\[
2\int_{\Omega}(v-u)^p\HH(v)\leq \Jp(u,v)^{p+m}\leq 2\int_{\Omega}(v-u)^p\HH(u);
\]
\item If $u\leq v\leq w$, then $\Jp(u,v)\leq 2^{\frac {p+2}{p+m}}\Jp(u,w)$;

\item If $u\leq v\leq w$, then $\Jp(v,w)^{p+m}\leq (\max(p,1)+1)^m\Jp(u,w)^{p+m}$;

\item $\Jp(v,\PP(u,v))\leq \Jp(u,\max (u,v))\leq \Jp(u,v)$;

\item $\Jp(v,\PP(u,v))^{p+m}+\Jp(u,\PP(u,v))^{p+m}\leq \Jp(v,u)^{p+m}$;

\item If $u\leq v$, then $\Jp(\PP(u,w),\PP(v,w))\leq 2^{\frac {p+2}{p+m}}\Jp(u,v)$.

\end{enumerate}
\end{proposition}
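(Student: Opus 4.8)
The plan is to prove the eleven statements in roughly the order given, since many later parts rely on earlier ones. The backbone of the whole proposition is part (6), the two-sided estimate for ordered functions, so I would establish that first together with the symmetry (3) and the additivity identity (4), and then derive the remaining inequalities from these.

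I would begin with the easy structural facts. Symmetry (3) is immediate from the symmetric definition of \Jp. For part (4), the key observation is that on the set $\{u\geq v\}$ we have $\max(u,v)=u$, while on $\{u<v\}$ we have $\max(u,v)=v$; writing $w=\max(u,v)$ and splitting $\Omega=\{u\geq v\}\cup\{u<v\}$, the integrals defining $\Jp(u,w)^{p+m}$ and $\Jp(v,w)^{p+m}$ each pick up exactly one piece of $\Jp(u,v)^{p+m}=\int|u-v|^p(\HH(u)+\HH(v))$. The cleanest route uses Theorem~\ref{thm_prel}(4), which says $\chi_{\{u<v\}}(dd^c\max(u,v))\wedge T=\chi_{\{u<v\}}(dd^cv)\wedge T$, to identify $\HH(\max(u,v))$ with the correct one of $\HH(u),\HH(v)$ on each set. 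Part (5) is then a direct consequence of (4): since $a+b\geq\max(a,b)$ and $(a+b)^{1/(p+m)}\leq a^{1/(p+m)}+b^{1/(p+m)}$ for the exponent $1/(p+m)\le 1$, both inequalities follow by taking $(p+m)$-th roots of (4).

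For part (6), the ordered case $u\leq v$, I would apply Proposition~\ref{cp xing}(1) directly: it gives $\int(v-u)^p\HH(v)\leq\int(v-u)^p\HH(u)$, so $\Jp(u,v)^{p+m}=\int(v-u)^p(\HH(u)+\HH(v))$ is squeezed between $2\int(v-u)^p\HH(v)$ and $2\int(v-u)^p\HH(u)$. With (6) in hand, part (8) follows from the weighted comparison principle Proposition~\ref{cp with weights}: for $u\leq v\leq w$, set the roles $w\geq v\geq u$ in that proposition to bound $\int(w-v)^p\HH(v)$ by $(\max(p,1)+1)^m\int(w-u)^p\HH(u)$, and then sandwich both $\Jp(v,w)^{p+m}$ and $\Jp(u,w)^{p+m}$ using (6). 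Part (7) is the coarser consequence: combining the upper bound of (6) for $\Jp(u,v)$ with the lower bound of (6) for $\Jp(u,w)$ (both legitimate since $u\le v$ and $u\le w$), and using monotonicity $\int(v-u)^p\HH(u)\le\int(w-u)^p\HH(u)$, yields the factor $2^{(p+2)/(p+m)}$ after keeping careful track of the two factors of $2$. Part (1) finiteness and part (2) the identity of indiscernibles should be handled early: finiteness follows from Theorem~\ref{thm_holder} applied to each of the $m+1$ mixed terms obtained by expanding $|u-v|^p(\HH(u)+\HH(v))$, and (2) follows because $\Jp(u,v)=0$ forces $\int(v-u)^p\HH(u)=0$ on $\{u<v\}$ and symmetrically, which by Theorem~\ref{thm_prel}(3) gives $u\ge v$ and $v\ge u$.

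The genuinely delicate parts are the rooftop-envelope statements (9), (10), and (11), and I expect (10) to be the main obstacle. For (9) and (10) the principal tool is the minimum principle Theorem~\ref{mp}, which controls $\HH(\PP(u,v))$ by $\chi_{\{\PP(u,v)=u\}}\HH(u)+\chi_{\{\PP(u,v)=v\}}\HH(v)$. Since $\PP(u,v)\leq\min(u,v)\leq\max(u,v)$, I would use the ordered estimate (6) to write $\Jp(v,\PP(u,v))^{p+m}$ and $\Jp(u,\PP(u,v))^{p+m}$ as integrals against $\HH(\PP(u,v))$, then invoke Theorem~\ref{mp} to replace that Hessian by the pieces of $\HH(u)$ and $\HH(v)$ supported on the contact sets $\{\PP(u,v)=u\}$ and $\{\PP(u,v)=v\}$; the main difficulty is that these contact sets need not be complementary and one must argue that the resulting two integrals sum to at most $\Jp(u,v)^{p+m}=\int|u-v|^p(\HH(u)+\HH(v))$, which is exactly the content of (10) and subsumes (9). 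Finally, (11) follows by combining the contraction-type bound (10) with the ordered comparison (7): when $u\le v$ one has $\PP(u,w)\le\PP(v,w)$, so $\Jp(\PP(u,w),\PP(v,w))$ is an ordered distance, and I would estimate it through the envelope inequality together with (7) applied to the chain $\PP(u,w)\le\PP(v,w)\le w$, producing the stated constant $2^{(p+2)/(p+m)}$.
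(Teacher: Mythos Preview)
Your treatment of parts (1)--(8) is essentially the paper's, and the tools you name (Theorem~\ref{thm_holder}, Theorem~\ref{thm_prel}, Proposition~\ref{cp xing}, Proposition~\ref{cp with weights}) are exactly the right ones. The difficulty is in your plan for (9)--(11), where the logical dependencies are inverted and the proposed routes do not reach the stated constants.

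First, (10) does \emph{not} subsume (9). From (10) you only obtain $\Jp(v,\PP(u,v))^{p+m}\le\Jp(u,v)^{p+m}$, which is strictly weaker than the first inequality in (9), namely $\Jp(v,\PP(u,v))\le\Jp(u,\max(u,v))$. In the paper the order is the opposite: (9) is proved directly, and then (10) is a one-line consequence of (4) together with (9) applied twice (once as stated, once with the roles of $u$ and $v$ swapped). Moreover, your proposed route to (10) via the upper bound in (6), i.e.\ $\Jp(v,\PP(u,v))^{p+m}\le 2\int(v-\PP(u,v))^p\HH(\PP(u,v))$, followed by Theorem~\ref{mp}, picks up an unavoidable factor of $2$ and only yields $\Jp(v,\PP(u,v))^{p+m}+\Jp(u,\PP(u,v))^{p+m}\le 2\,\Jp(u,v)^{p+m}$. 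The paper avoids this loss by \emph{not} passing through (6): it expands $\Jp(v,\PP(u,v))^{p+m}=\int(v-\PP(u,v))^p(\HH(v)+\HH(\PP(u,v)))$ directly, applies Theorem~\ref{mp} only to the $\HH(\PP(u,v))$ summand, and compares the result with the explicit formula $\Jp(u,\max(u,v))^{p+m}=\int_{\{u<v\}}(v-u)^p(\HH(u)+\HH(v))$ obtained from Theorem~\ref{thm_prel}(4).

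Second, your sketch for (11) does not close. Applying (7) to the chain $\PP(u,w)\le\PP(v,w)\le w$ produces a bound in terms of $\Jp(\PP(u,w),w)$, which has no reason to be controlled by $\Jp(u,v)$. The paper's device is different: when $u\le v$ one has the identity $\PP(u,w)=\PP(u,\PP(v,w))$, so that $\Jp(\PP(u,w),\PP(v,w))=\Jp\big(\PP(v,w),\PP(u,\PP(v,w))\big)$, and now (9) (applied to the pair $u,\PP(v,w)$) gives $\le\Jp(u,\max(u,\PP(v,w)))$; since $u\le\max(u,\PP(v,w))\le v$, a final application of (7) yields the constant $2^{(p+2)/(p+m)}$. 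Thus (11) rests on (9), not on (10), and on the associativity-type identity for the rooftop envelope, which your plan does not mention.
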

\begin{proof}

\smallskip

\noindent (1). By Theorem~\ref{thm_holder} we have
\begin{multline*}
\Jp(u,v)^{p+m}\leq \int_{\Omega}(-u-v)^p (\HH(u)+\HH(v))\leq \\ D(m,p)e_{p,m}(u+v)^{\frac {p}{p+m}}(e_{p}(u)^{\frac {m}{p+m}}+e_{p}(v)^{\frac {m}{p+m}})<\infty.
\end{multline*}

\medskip

\noindent (2). It is obvious that $\Jp(u,u)=0$. Next, assume that $\Jp(u,v)=0$. Then $\HH(u)(\{u<v\})=0$, so by Theorem~\ref{thm_prel} we obtain $u\geq v$. In a similar manner, we have $\HH(v)(\{v<u\})=0$. Hence, $v\leq u$, and therefore it follows $u=v$.

\medskip

\noindent (3). This property is an immediate consequence of the definition of $\Jp$.

\medskip

\noindent (4). Thanks to Theorem~\ref{thm_prel} it follows that $\HH(\max(u,v))=\HH(u)$ on the set $\{u>v\}$, and similarly $\HH(\max(u,v)))=\HH(v)$ on the set $\{v>u\}$. Thus,
\begin{multline*}
\Jp(u,v)^{p+m}=\int_{\Omega}|u-v|^p(\HH(u)+\HH(v))=\\
\int_{\{u<v\}}(\max(u,v)-u)^p(\HH(u)+\HH(\max(u,v)))+\\
\int_{\{v<u\}}(\max(u,v)-v)^p(\HH(v)+\HH(\max(u,v)))=\\
\Jp(u,\max(u,v))^{p+m}+\Jp(v,\max(u,v))^{p+m}.
\end{multline*}

\medskip

\noindent (5). This is an immediate consequence of (4).

\medskip

\noindent (6). Proposition~\ref{cp xing} yields this result.

\medskip

\noindent (7). Note that $0\leq w-v\leq w-u$. Then by using (6) we get
\begin{multline*}
\Jp(u,v)^{p+m}\leq 2\left(\int_{\Omega}(v-u)^p\HH(u)\right)\leq\\
2^{1+p}\left(\int_{\Omega}(w-v)^p\HH(u)+\int_{\Omega}(w-u)^p\HH(u)\right)\leq\\
2^{2+p}\Jp(u,w)^{p+m}.
\end{multline*}

\medskip

\noindent (8). By Proposition~\ref{cp with weights} we get
\begin{multline*}
\Jp(v,w)^{p+m}=\int_{\Omega}(w-v)^p(\HH(v)+\HH(w))=\\
\int_{\Omega}(w-v)^p\HH(v)+\int_{\Omega}(w-v)^p\HH(w)\leq\\
(\max(p,1)+1)^m\int_{\Omega}(w-u)^p\HH(u)+\int_{\Omega}(w-u)^p\HH(w)=\\ (\max(p,1)+1)^m\Jp(u,w)^{p+m}.
\end{multline*}

\medskip

\noindent (9). By Theorem~\ref{thm_prel} we get
\begin{multline*}
\Jp(u,\max(u,v))^{p+m}=\int_{\Omega}(\max(u,v)-u)^p(\HH(u)+\HH(\max(u,v)))=\\
\int_{\{u<v\}}(v-u)^p(\HH(u)+\HH(\max(u,v))))=\int_{\{u<v\}}(v-u)^p(\HH(u)+\HH(v)).
\end{multline*}
On the other hand, using Theorem~\ref{mp} we arrive at
\begin{multline*}
\Jp(v,\PP(u,v))^{p+m}=\int_{\Omega}(v-\PP(u,v))^p(\HH(v)+\HH(\PP(u,v)))\leq\\
\int_{\{\PP(u,v)<v\}}(v-\PP(u,v))^p\Big(\HH(v)+ \chi_{\{\PP(u,v)=v\}}\HH(v)+\chi_{\{\PP(u,v)=u\}}\HH(u))\Big)=\\
\int_{\{\PP(u,v)<v\}\cap \{\PP(u,v)=u\}}(v-\PP(u,v))^p(\HH(u)+\HH(v))\leq\\
\int_{\{u<v\}}(v-u)^p(\HH(u)+\HH(v))=\Jp(u,\max (u,v))^{p+m}.
\end{multline*}
The last inequality follows from (5).

\medskip

\noindent (10). This follows from (4) together with (9).

\medskip

\noindent (11). Note that $u\leq \max(u,\PP(v,w))\leq v$, and then by (7) and (9), we have
\begin{multline*}
\Jp(\PP(v,w),\PP(u,w))=\Jp(\PP(v,w),\PP(u,\PP(v,w)))\leq \Jp(u,\max(u,\PP(v,w)))\leq\\
2^{\frac {p+2}{p+m}}\Jp(u,v).
\end{multline*}
\end{proof}

By letting us be inspired by~\cite{GLZ}, we can in the next lemma prove that $\Jp$ enjoys the quasi-triangle inequality.

\begin{lemma}\label{qm} Let $n\geq 2$, $1\leq m\leq n$, and assume that $\Omega\subset \mathbb C^n$ is a $m$-hyperconvex domain. Then there exists $C>0$ such that for any $u,v,w\in \mathcal E_{p,m}(\Omega)$ it holds
\[
\Jp(u,v)\leq C(\Jp(u,w)+\Jp(w,v)).
\]
Furthermore, the constant $C$ can be taken as $C=\left(2^{2p+1}(2^p+1)3^{m}\right)^{\frac 1{p+m}}$.
\end{lemma}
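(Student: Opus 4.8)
\textbf{Proof proposal for Lemma~\ref{qm}.}

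The plan is to reduce the general quasi-triangle inequality to the special monotone situation handled in Proposition~\ref{basic}, using the rooftop envelope $\PP$ as the bridge between $u$, $v$, and $w$. The key observation is that $\PP(u,w)$ and $\PP(w,v)$ both lie below $w$, and that $\Jp$ behaves well under the operations of taking maxima (part (4)--(5)) and of passing to envelopes (parts (9)--(11)). So the strategy is: first estimate $\Jp(u,v)$ by inserting $w$ through the envelope, then repeatedly apply the monotone estimates (7)--(8) together with the envelope estimates (9)--(11) to reassemble everything in terms of $\Jp(u,w)$ and $\Jp(w,v)$, keeping careful track of the multiplicative constants so that the final constant comes out as the claimed $\left(2^{2p+1}(2^p+1)3^m\right)^{1/(p+m)}$.

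Concretely, I would start from part (5), which gives
\[
\Jp(u,v)\leq \Jp(u,\max(u,v))+\Jp(v,\max(u,v)),
\]
and then analyze each term separately. For the first term I would compare $u$ with $\PP(u,w)$ (which satisfies $\PP(u,w)\leq u$ and $\PP(u,w)\leq w$), writing $\Jp(u,\max(u,v))$ in terms of $\Jp(u,\PP(u,w))$ and of a piece governed by $w$; here part (9) controls $\Jp(u,\PP(u,w))$ by $\Jp(u,w)$, and the symmetric analysis with $\PP(w,v)$ produces a term controlled by $\Jp(w,v)$. The passage between the two anchor points $w$ is the role of the envelope: by part (11), for $u\le v$ one has $\Jp(\PP(u,w),\PP(v,w))\le 2^{(p+2)/(p+m)}\Jp(u,v)$, and parts (7)--(8) convert the monotone comparisons $\PP(u,w)\le u\le w$ and $\PP(w,v)\le v$ into the needed bounds. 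The arithmetic uses the elementary inequality $(a+b)^p\le 2^{p}(a^p+b^p)$ (or $a^p+b^p\le(a+b)^p$ for $p\ge 1$) on the integrands to split $|u-v|^p$ across the intermediate level $w$, which is where the factors $2^{2p+1}$, $(2^p+1)$, and $3^m$ enter — the $3^m$ coming from expanding a Hessian of a sum of three $dd^c$ terms.

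I expect the main obstacle to be the bookkeeping of constants rather than any conceptual difficulty: each application of (7), (8), (9), or (11) contributes its own factor, and these must be combined — across the $p$-th power inside the integral and the $1/(p+m)$ root outside — so that nothing is double-counted and the product collapses to exactly $\left(2^{2p+1}(2^p+1)3^m\right)^{1/(p+m)}$. A secondary subtlety is ensuring that the sets on which the various Hessian measures are compared (e.g.\ $\{\PP=u\}$ versus $\{u<v\}$) are handled consistently, so that the minimum principle (Theorem~\ref{mp}) is invoked only where its hypothesis $\HH(\PP(u,v))\le\chi_{\{\PP=u\}}\HH(u)+\chi_{\{\PP=v\}}\HH(v)$ genuinely applies; the splitting of the integral into the regions where each of $u$, $v$, $w$ is the relevant minimizer is the delicate part. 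Once the envelope is used to route both legs through $w$ and the monotone estimates are applied on each leg, taking $(p+m)$-th roots and absorbing the constants yields the stated inequality.
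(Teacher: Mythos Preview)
Your plan has a genuine gap. The monotone estimates (7), (8), (11) of Proposition~\ref{basic} all require one function to lie below another, and (9)--(10) compare $u,v$ with $\PP(u,v)$ or $\max(u,v)$ built from \emph{the same pair}. None of these tools produces, for unordered $u,v,w$, a chain of ordered functions from $u$ to $v$ that passes through $w$ (or through $\PP(u,w)$, $\PP(w,v)$) without already invoking a triangle-type inequality. Concretely: starting from $\Jp(u,\max(u,v))$ you need to make $w$ appear, but $\max(u,v)$ has no relation to $w$, and inserting $\PP(u,w)$ gives $\PP(u,w)\le u\le\max(u,v)$, so (7)--(8) only yield $\Jp(\PP(u,w),\max(u,v))\lesssim\Jp(\PP(u,w),u)\le\Jp(u,w)$ --- the wrong direction. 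Your remark that the $3^m$ comes from ``expanding a Hessian of a sum of three $dd^c$ terms'' is not what happens; there is no such expansion in the envelope route, which is a sign that the bookkeeping you anticipate will not close up.

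The paper's argument is entirely different and does not use $\PP$ or Proposition~\ref{basic} at all. It writes $\Jp(u,v)^{p+m}$ via the layer-cake formula $p\int_0^\infty s^{p-1}(\HH(u)+\HH(v))(\{|u-v|>s\})\,ds$, then uses the set inclusion
\[
\{u<v-2s\}\subset\{u<w-s\}\cup\Big\{w<\tfrac{u+2v}{3}-\tfrac{s}{3}\Big\}
\]
to insert $w$. On the second set one applies the comparison principle twice, first replacing $\HH(u)$ by $3^m\HH\!\big(\tfrac{u+2v}{3}\big)$ (this is the true source of $3^m$: $dd^cu\le 3\,dd^c\tfrac{u+2v}{3}$), then by $3^m\HH(w)$. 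Integrating back and using $|w-\tfrac{u+2v}{3}|^p\le(\tfrac{2}{3})^p|w-u|^p+(\tfrac{4}{3})^p|w-v|^p$ gives the factors $2^{2p+1}(2^p+1)$. If you want to salvage an envelope-based proof you would need a new idea linking $\PP(u,w)$ and $\PP(w,v)$ directly; as written, the proposal does not supply one.
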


\begin{proof}
By using the comparison principle (see e.g. Theorem~\ref{thm_prel}) it follows
\[
\begin{aligned}
\HH(u)(\{v<u-2s\})&\leq \HH(v)(\{v<u-2s\})\\
\HH(v)(\{u<v-2s\})&\leq \HH(u)(\{u<v-2s\}),
\end{aligned}
\]
and therefore it holds
\begin{multline}\label{2}
\Jp(u,v)^{p+m}=\int_{\Omega}|u-v|^p(\HH(u)+\HH(v))=\\
p\int_0^{\infty}s^{p-1}(\HH(u)+\HH(v))(\{|u-v|>s\})ds=\\
p2^p\int_0^{\infty}s^{p-1}(\HH(u)+\HH(v))(\{|u-v|>2s\})ds\leq \\
p2^{p+1}\int_0^{\infty}s^{p-1}(\HH(u)(\{u<v-2s\})+\HH(v)(\{v<u-2s\}))ds.
\end{multline}

Next we shall estimate the measure $\HH(u)(\{u<v-2s\})$. Since
\[
\{u<v-2s\}\subset \{u<w-s\}\cup \{w-s\leq u<v-2s\}\subset \{u<w-s\}\cup \left\{w<\frac {u+2v}{3}-\frac s3\right\},
\]
we can using again the comparison principle (see e.g. Theorem~\ref{thm_prel}) and arrive at
\begin{multline*}
\HH(u)(\{w-s\leq u<v-2s\})\leq \HH(u)\left(\left\{w<\frac {u+2v}{3}-\frac s3\right\}\right)\leq \\
3^m\HH\left(\frac {u+2v}{3}\right)\left(\left\{w<\frac {u+2v}{3}-\frac s3\right\}\right)\leq 3^m\HH(w)\left(\left\{w<\frac {u+2v}{3}-\frac s3\right\}\right).
\end{multline*}
Note that also holds
\begin{multline*}
\left|w-\frac {u+2v}{3}\right|^p=\frac {1}{3^p}|3w-u-2v|^p\leq\\
\left(\frac {2}{3}\right)^p(|w-u|^p+|2w-2v|^p)\leq \left(\frac 23\right)^p|w-u|^p+\left(\frac 43\right)^p|w-v|^p.
\end{multline*}
Finally, by using the above estimates
\begin{multline*}
p\int_0^{\infty}s^{p-1}\HH(u)(\{u<v-2s\})ds\leq p\int_0^{\infty}s^{p-1}\HH(u)(\{u<w-s\})ds+\\
p3^m\int_0^{\infty}s^{p-1}\HH(w)\left(\left\{w<\frac {u+2v}{3}-\frac s3\right\}\right)ds\leq\\
\int_{\Omega}|w-u|^p\HH(u)+3^{m+p}\int_{\Omega}\left|w-\frac {u+2v}{3}\right|^p\HH(w)\leq\\
\int_{\Omega}|w-u|^p\HH(u)+3^{m}2^p\int_{\Omega}|w-u|^p\HH(w)+3^m4^p\int_{\Omega}|w-v|^p\HH(w).
\end{multline*}
A similar estimate can be obtain for $\HH(v)(\{v<u-2s\})$, and therefore by (\ref{2}) we get
\begin{multline*}
\Jp(u,v)^{p+m}\leq p2^{p+1}\int_0^{\infty}s^{p-1}(\HH(u)(\{u<v-2s\})+\HH(v)(\{v<u-2s\}))ds\leq\\
2^{p+1}\left(\int_{\Omega}|w-u|^p\HH(u)+3^{m}2^p\int_{\Omega}|w-u|^p\HH(w)+3^m4^p\int_{\Omega}|w-v|^p\HH(w)\right)+\\
2^{p+1}\left(\int_{\Omega}|w-v|^p\HH(v)+3^{m}2^p\int_{\Omega}|w-v|^p\HH(w)+3^m4^p\int_{\Omega}|w-u|^p\HH(w)\right)\leq \\
2^{2p+1}(2^p+1)3^{m}(\Jp(u,w)^{p+m}+\Jp(v,w)^{p+m}).
\end{multline*}
To finish the proof it enough to observe that
\[
\Jp(u,v)\leq \left(2^{2p+1}(2^p+1)3^{m})\right)^{\frac 1{p+m}}(\Jp(u,w)+\Jp(v,w)).
\]
\end{proof}

Thanks to Proposition~\ref{basic} and Lemma~\ref{qm} we can now conclude that we have a family of quasimetric spaces. The aim of the rest of this
section is to prove that they are complete, which we will do in Theorem~\ref{comp}.

\begin{theorem}\label{thm_quasimetric} Let $n\geq 2$, $1\leq m\leq n$, and assume that $\Omega\subset \mathbb C^n$ is a $m$-hyperconvex domain. The pair $(\mathcal E_{p,m}(\Omega),\Jp)$ is a quasimetric space.
\end{theorem}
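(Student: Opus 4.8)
The plan is to verify directly the three defining properties of a quasimetric listed at the start of Section~\ref{sec_quasimetric}, drawing entirely on the work already assembled in Proposition~\ref{basic} and Lemma~\ref{qm}. Since all the analytic content has been front-loaded into those two results, the proof of the theorem is a short assembly argument rather than a fresh computation.

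First I would record that $\Jp$ genuinely takes values in $[0,\infty)$. For every $u,v\in\mathcal E_{p,m}(\Omega)$ the quantity $\Jp(u,v)$ is finite by Proposition~\ref{basic}(1), and it is manifestly non-negative, being a $(p+m)$-th root of an integral of $|u-v|^p$ against the positive measures $\HH(u)+\HH(v)$. Thus $\Jp$ is a well-defined map $\mathcal E_{p,m}(\Omega)\times\mathcal E_{p,m}(\Omega)\to[0,\infty)$, as the definition of a quasimetric requires before the axioms can even be stated.

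Next I would check the three axioms in turn. Property (1), that $\Jp(u,v)=0$ if and only if $u=v$, is exactly Proposition~\ref{basic}(2); the nontrivial direction there uses the comparison principle of Theorem~\ref{thm_prel} to force $u\geq v$ and $v\geq u$ from the vanishing of $\HH(u)$ on $\{u<v\}$ and of $\HH(v)$ on $\{v<u\}$. Property (2), symmetry $\Jp(u,v)=\Jp(v,u)$, is Proposition~\ref{basic}(3) and is immediate from the symmetric form of the integrand $|u-v|^p(\HH(u)+\HH(v))$. Property (3), the quasi-triangle inequality with some constant $C\geq 1$, is precisely Lemma~\ref{qm}, which furnishes the explicit admissible value $C=\left(2^{2p+1}(2^p+1)3^{m}\right)^{\frac{1}{p+m}}$; here I would only remark that this $C$ is indeed $\geq 1$, so it qualifies under condition (3).

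The step carrying all the weight is property (3), and that weight has already been borne by Lemma~\ref{qm}. The genuine difficulty there lies in controlling the sublevel-set measure $\HH(u)(\{u<v-2s\})$ via a geometric decomposition through the intermediate function $w$, combined with the convexity estimate on $\left|w-\frac{u+2v}{3}\right|^p$ and a further application of Theorem~\ref{thm_prel}. Once that lemma is granted, the theorem itself presents no further obstacle, and I would simply conclude that $(\mathcal E_{p,m}(\Omega),\Jp)$ satisfies all three conditions and is therefore a quasimetric space.
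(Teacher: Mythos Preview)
Your proposal is correct and follows essentially the same approach as the paper: the paper does not give a formal proof of Theorem~\ref{thm_quasimetric} at all, but simply remarks just before its statement that the result follows from Proposition~\ref{basic} and Lemma~\ref{qm}. Your write-up is a faithful (and more detailed) unpacking of precisely that remark, with the helpful additional observation that the explicit constant $C$ from Lemma~\ref{qm} is indeed at least $1$.
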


To be able to prove that the quasimetric spaces are complete we need information on how $\Jp$ behaves under monotone sequences. In the case $p=1$, $\operatorname{J}_1$ is continuous both for increasing and decreasing sequences, but only for decreasing sequences when $p\neq 1$.

\begin{proposition}\label{monotone} Let $n\geq 2$, $1\leq m\leq n$, and assume that $\Omega\subset \mathbb C^n$ is a $m$-hyperconvex domain. Let $u_j\in \mathcal E_{p,m}(\Omega)$ be a decreasing sequence that converging to $u \in\mathcal E_{p,m}(\Omega)$. Then $\Jp(u_j,u)\to 0$, as $j\to\infty$. If $p=1$, then the same statement is true for increasing sequences.
\end{proposition}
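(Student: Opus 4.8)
The plan is to treat the two monotonicity directions separately, in each case reducing $\Jp$ to a single-measure integral that can be controlled by dominated convergence.

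For a decreasing sequence $u_j\downarrow u$ (any $p>0$) we have $u\le u_j$, so the upper bound in Proposition~\ref{basic}(6) gives directly
\[
\Jp(u_j,u)^{p+m}=\Jp(u,u_j)^{p+m}\le 2\int_\Omega(u_j-u)^p\HH(u).
\]
Here the integrating measure $\HH(u)$ is \emph{fixed}. Since every function in $\mathcal E_{p,m}(\Omega)$ is nonpositive, $0\le (u_j-u)^p\le(-u)^p$, and $(-u)^p\in L^1(\HH(u))$ because $\int_\Omega(-u)^p\HH(u)=e_{p,m}(u)<\infty$. As $(u_j-u)^p\downarrow 0$ pointwise, dominated convergence yields $\int_\Omega(u_j-u)^p\HH(u)\to 0$, hence $\Jp(u_j,u)\to 0$. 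This uses only (6) and works for every $p>0$.

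For the increasing case I restrict to $p=1$ and write, using $u_j\le u$,
\[
\operatorname{J}_1(u_j,u)^{1+m}=\int_\Omega(u-u_j)\HH(u)+\int_\Omega(u-u_j)\HH(u_j)=:A_j+B_j.
\]
The term $A_j$ is handled exactly as before: $u-u_j\downarrow 0$ is dominated by $-u_1$, which lies in $L^1(\HH(u))$ by Theorem~\ref{thm_holder} (take $u_0=u_1$, $u_1=\cdots=u_m=u$), so $A_j\to 0$. The difficulty is $B_j$, where the measure $\HH(u_j)$ itself varies with $j$; no fixed dominating function is available, and the Xing inequality of Proposition~\ref{cp xing} only bounds $B_j$ \emph{from below} (it gives $A_j\le B_j$).

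To control $B_j$ I would expand $B_j=\int_\Omega u\,\HH(u_j)+e_{1,m}(u_j)$ and invoke two standard facts about increasing sequences of $m$-subharmonic functions: the weak convergence $\HH(u_j)\to\HH(u)$, and the continuity of the $(1,m)$-energy, $e_{1,m}(u_j)\to e_{1,m}(u)$. Since $-u$ is lower semicontinuous and nonnegative, weak convergence gives $\limsup_j\int_\Omega u\,\HH(u_j)\le\int_\Omega u\,\HH(u)=-e_{1,m}(u)$, whence $\limsup_j B_j\le -e_{1,m}(u)+e_{1,m}(u)=0$; as $B_j\ge 0$ we conclude $B_j\to 0$, and therefore $\operatorname{J}_1(u_j,u)\to 0$. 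The main obstacle is the energy continuity $e_{1,m}(u_j)\to e_{1,m}(u)$: its proof rests on repeated integration by parts, which turns $e_{1,m}$ and its polarized multilinear form $\int_\Omega(-u_0)\,dd^cu_1\wedge\cdots\wedge dd^cu_m\wedge\be$ into a \emph{symmetric} functional of its arguments, and this symmetry is available only for the linear weight $p=1$. For $p\neq 1$ the factor $(-u_0)^p$ cannot be moved across the integration by parts, the energy need not be continuous along increasing sequences, and the conclusion genuinely fails — which is precisely why the increasing case is stated only for $p=1$.
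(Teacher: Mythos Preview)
Your decreasing case is identical to the paper's: apply Proposition~\ref{basic}(6) and pass to the limit in $\int_\Omega(u_j-u)^p\HH(u)$ with the fixed measure $\HH(u)$ (the paper phrases it as monotone convergence, you as dominated convergence; either works).

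For the increasing case with $p=1$ the approaches diverge slightly. The paper again uses Proposition~\ref{basic}(6), which here gives $\operatorname{J}_1(u_j,u)^{1+m}\le 2\int_\Omega(u-u_j)\HH(u_j)$ (your $2B_j$), and then simply cites~\cite{ACmetric} for the convergence of that integral to~$0$. You instead keep both terms $A_j+B_j$ and supply an argument for $B_j\to 0$: split $B_j=\int_\Omega u\,\HH(u_j)+e_{1,m}(u_j)$, use lower semicontinuity of $-u$ together with weak convergence $\HH(u_j)\to\HH(u)$, and invoke continuity of $e_{1,m}$ along increasing sequences. This is correct and is essentially what lies behind the reference the paper cites; your version has the advantage of being self-contained and of making explicit \emph{why} $p=1$ is needed (the symmetry of the polarized energy under integration by parts), whereas the paper's version is shorter but opaque on this point. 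Note that you could have saved the separate treatment of $A_j$ by using the same (6)-bound the paper uses, since it already gives $\operatorname{J}_1(u_j,u)^{1+m}\le 2B_j$.
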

\begin{proof}
First assume that the sequence $u_j$ is decreasing. Then by Proposition~\ref{basic}, and the monotone convergence theorem we get
\[
\Jp(u_j,u)^{p+m}\leq 2\int_{\Omega}(u_j-u)^p\HH(u)\to 0,
\]
as $j\to \infty$. Now assume that $p=1$, and  the sequence $u_j$ is increasing. From~\cite{ACmetric} and Proposition~\ref{basic} we get
\[
\Jp(u_j,u)^{p+m}\leq 2\int_{\Omega}(u-u_j)\HH(u_j)\to 0,
\]
as $j\to \infty$.
\end{proof}

To proof that the space $(\mathcal E_{p,m}(\Omega),\Jp)$ is complete we shall need the following elementary fact.

\begin{proposition}\label{weakconvergence} If $f_j$ is an increasing sequence of continuous functions defined on $X$ such that $f_j\nearrow f$, and  $\mu_j\to \mu$ weakly, as $j\to \infty$, then
\[
\liminf_{j\to \infty}\int_{X}f_jd\mu_j\geq \int_{X}fd\mu.
\]
\end{proposition}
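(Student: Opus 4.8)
The plan is to exploit the monotonicity in $j$ to replace the moving integrand $f_j$ by a fixed test function $f_k$, apply weak convergence to that single function, and only at the very end let $k\to\infty$ via monotone convergence. First I would fix an index $k\in\N$ and observe that, since the sequence $\{f_j\}$ is increasing, $f_j\geq f_k$ for every $j\geq k$; because each $\mu_j$ is a non-negative measure this gives
\[
\int_X f_j\,d\mu_j\;\geq\;\int_X f_k\,d\mu_j\qquad\text{for all } j\geq k.
\]

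Taking the lower limit in $j$ (the finitely many terms with $j<k$ do not affect the $\liminf$) yields
\[
\liminf_{j\to\infty}\int_X f_j\,d\mu_j\;\geq\;\liminf_{j\to\infty}\int_X f_k\,d\mu_j.
\]
Now $f_k$ is a single fixed continuous function, so the weak convergence $\mu_j\to\mu$ applies directly to it: the limit on the right exists and equals $\int_X f_k\,d\mu$. Hence $\liminf_{j\to\infty}\int_X f_j\,d\mu_j\geq\int_X f_k\,d\mu$ for every $k$. Finally I would let $k\to\infty$: since $f_k\nearrow f$, the monotone convergence theorem gives $\int_X f_k\,d\mu\to\int_X f\,d\mu$, and combining this with the previous display completes the argument.

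The step requiring the most care is the application of weak convergence to $f_k$. Weak convergence of (Radon) measures is tested against continuous functions from an admissible class — bounded continuous, or continuous with compact support, depending on the convention — so one must check that $f_k$ belongs to this class; in the intended application the relevant $f_k$ are bounded continuous functions, which is exactly what is needed. A secondary point is the monotone convergence step, which needs the increasing sequence $\{f_k\}$ to be bounded below by a $\mu$-integrable function (for instance $f_1\geq 0$, or $f_1\in L^1(\mu)$); under such a normalization the increasing version of the monotone convergence theorem applies verbatim and the passage $\int_X f_k\,d\mu\to\int_X f\,d\mu$ is justified.
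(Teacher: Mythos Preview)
Your argument is correct and follows essentially the same route as the paper: fix $k$, use monotonicity to bound $\int f_j\,d\mu_j$ below by $\int f_k\,d\mu_j$, pass to the $\liminf$, and then let $k\to\infty$ via monotone convergence. The only difference is that the paper, before running this scheme, isolates the auxiliary inequality $\int_X \alpha\,d\mu\leq \liminf_j\int_X \alpha\,d\mu_j$ for lower semicontinuous $\alpha$ (approximating $\alpha$ from below by functions in $\mathcal C_0(X)$); this is precisely what justifies the step you flagged as delicate when weak convergence is tested against $\mathcal C_0(X)$ rather than against all bounded continuous functions, so your caveat and the paper's extra lemma address the same point.
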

\begin{proof}
First we shall prove that if $\alpha$ is a lower continuous function then
\begin{equation}\label{9}
\int_{X}\alpha d\mu\leq \liminf_{j\to \infty}\int_{X}\alpha d\mu_j.
\end{equation}
Let $\mathcal C_0(X)\ni g_k\nearrow \alpha$, then $\int_{X}g_kd\mu_j\leq \int_{X}\alpha d\mu_j$. Now by the weak convergence we get
\[
\int_{X}g_kd\mu=\lim_{j\to \infty}\int_{X}g_kd\mu_j\leq \liminf_{j\to \infty}\int_{X}\alpha d\mu_j\,
\]
and by monotone convergence theorem we obtain (\ref{9}).

Fix $k$, and let $j\geq k$, then $\int_{X}f_jd\mu_j\geq \int_{X}f_kd\mu_j$. Therefore by (\ref{9})
\[
\liminf_{j\to \infty}\int_{X}f_jd\mu_j\geq \liminf_{j\to \infty}\int_{X}f_kd\mu_j\geq \int_{X}f_kd\mu.
\]
From the monotone convergence theorem it now follows
\[
\liminf_{j\to \infty}\int_{X}f_jd\mu_j\geq \lim_{k\to \infty}\int_{X}f_kd\mu=\int_{X}fd\mu.
\]
\end{proof}

The completeness of $(\mathcal E_{p,m}(\Omega),\Jp)$ is next on our agenda.

\begin{theorem}\label{comp} Let $n\geq 2$, $1\leq m\leq n$, and assume that $\Omega\subset \mathbb C^n$ is a $m$-hyperconvex domain. The quasimetric space $(\mathcal E_{p,m}(\Omega),\Jp)$ is complete.
\end{theorem}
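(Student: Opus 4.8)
The plan is to show that every $\Jp$-Cauchy sequence converges in $\mathcal E_{p,m}(\Omega)$, the candidate limit being produced from tail rooftop envelopes. Let $\{u_j\}$ be Cauchy. First I would record a uniform energy bound: since $0\in\mathcal E_{p,m}(\Omega)$ and $\HH(0)=0$, one has $\Jp(u_j,0)^{p+m}=e_{p,m}(u_j)$, so Lemma~\ref{qm} together with the Cauchy property gives $\sup_j e_{p,m}(u_j)=:M<\infty$. Next I invoke the equivalent metric recalled at the start of this section: there are $\epsilon>0,\ A>0$ and a metric $\rho$ with $A^{-1}\Jp^{\epsilon}\le\rho\le A\Jp^{\epsilon}$. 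Because a Cauchy sequence converges as soon as one subsequence does, I may pass to a subsequence (not relabelled) with $\sum_j\Jp(u_j,u_{j+1})^{(p+m)\epsilon}<\infty$.

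For $j\le k$ set $w_j^{(k)}=\PP(u_j,\dots,u_k)$, so $w_j^{(j)}=u_j$ and $w_j^{(k)}$ decreases in $k$ to $w_j:=\PP(u_j,u_{j+1},\dots)$, while $\{w_j\}$ increases in $j$. The decisive ingredient is a constant-free telescoping estimate from Proposition~\ref{basic}(6) and the minimum principle (Theorem~\ref{mp}). Writing $w_j^{(k+1)}=\PP(w_j^{(k)},u_{k+1})$ and using $w_j^{(k+1)}\le w_j^{(k)}$, Proposition~\ref{basic}(6) gives $\Jp(w_j^{(k)},w_j^{(k+1)})^{p+m}\le 2\int_\Omega(w_j^{(k)}-w_j^{(k+1)})^p\HH(w_j^{(k+1)})$; the integrand vanishes on $\{w_j^{(k+1)}=w_j^{(k)}\}$, while on $\{w_j^{(k+1)}=u_{k+1}\}$ the inclusion $w_j^{(k)}\le u_k$ forces $0\le w_j^{(k)}-u_{k+1}\le u_k-u_{k+1}$, so Theorem~\ref{mp} yields
\[
\Jp(w_j^{(k)},w_j^{(k+1)})^{p+m}\le 2\int_\Omega|u_k-u_{k+1}|^p\HH(u_{k+1})\le 2\,\Jp(u_k,u_{k+1})^{p+m}.
\]
The same computation applied to $w_j=\PP(u_j,w_{j+1})$ gives $\Jp(w_j,w_{j+1})^{p+m}\le 2\,\Jp(u_j,u_{j+1})^{p+m}$, and an analogous argument bounds $e_{p,m}(w_j^{(k)})$ uniformly, so $w_j$ and $u:=(\lim_j w_j)^*$ belong to $\mathcal E_{p,m}(\Omega)$.

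Now I pass to $\rho$, whose genuine triangle inequality makes the telescoping additive and so defeats the otherwise fatal compounding of the quasimetric constant. Since $w_j^{(k)}\searrow w_j$, Proposition~\ref{monotone} gives $\rho(w_j^{(k)},w_j)\to0$, whence
\[
\rho(u_j,w_j)\le\sum_{k\ge j}\rho(w_j^{(k)},w_j^{(k+1)})\le A\,2^{\epsilon}\sum_{i\ge j}\Jp(u_i,u_{i+1})^{(p+m)\epsilon},
\]
which tends to $0$ as $j\to\infty$; hence $\Jp(u_j,w_j)\to0$. The bound on $\Jp(w_j,w_{j+1})$ shows likewise that $\{w_j\}$ is $\rho$-Cauchy, with $\rho(w_j,w_k)\le\tau_j:=A2^{\epsilon}\sum_{i\ge j}\Jp(u_i,u_{i+1})^{(p+m)\epsilon}$ for every $k\ge j$.

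It remains to identify the increasing limit $u$ with the $\Jp$-limit of $\{w_j\}$, and this is the step I expect to be the main obstacle, precisely because Proposition~\ref{monotone} provides continuity along increasing sequences only when $p=1$. I would replace continuity by lower semicontinuity: for fixed $j$ and $k\ge j$ one has $w_j\le w_k$, so $(w_k-w_j)^p$ increases to $(u-w_j)^p$; applying the monotone convergence theorem to the term carried by the fixed measure $\HH(w_j)$, and Proposition~\ref{weakconvergence} (after approximating $(u-w_j)^p$ from below by continuous functions) to the term carried by $\HH(w_k)$, which converges weakly to $\HH(u)$ along the increasing sequence, should give $\Jp(w_j,u)^{p+m}\le\liminf_{k\to\infty}\Jp(w_j,w_k)^{p+m}$. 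Combined with the uniform Cauchy bound $\rho(w_j,w_k)\le\tau_j$ this forces $\Jp(w_j,u)\to0$. Finally Lemma~\ref{qm} gives $\Jp(u_j,u)\le C(\Jp(u_j,w_j)+\Jp(w_j,u))\to0$, so the subsequence converges to $u\in\mathcal E_{p,m}(\Omega)$, and since the original sequence is Cauchy it converges to the same limit; thus $(\mathcal E_{p,m}(\Omega),\Jp)$ is complete.
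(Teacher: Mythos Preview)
Your overall strategy is sound and genuinely different from the paper's: the paper replaces the Cauchy sequence by a nearby sequence of \emph{continuous} functions $u_j\in\mathcal E_{0,m}\cap\mathcal C(\bar\Omega)$, works with the maxima $v_{j,k}=\max(u_j,\dots,u_k)$ (increasing in $k$, decreasing in $j$), and controls the compounding of the quasi-triangle constant by choosing a very fast rate $\Jp(\varphi_j,\varphi_{j+1})\le (2C)^{-(j+3)}/3$. You instead use rooftop envelopes $w_j^{(k)}=\PP(u_j,\dots,u_k)$ (decreasing in $k$, increasing in $j$) together with the equivalent metric $\rho$, which is a clean way to telescope without tracking powers of $C$. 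The constant-free estimate $\Jp(w_j^{(k)},w_j^{(k+1)})^{p+m}\le 2\Jp(u_k,u_{k+1})^{p+m}$ via Theorem~\ref{mp} is correct and elegant.

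There is, however, a real gap in the final step. Proposition~\ref{weakconvergence} requires the increasing sequence $f_k$ to consist of \emph{continuous} functions; this is exactly why the paper first passes to continuous approximants, so that $(v_{j,k}-u_j)^p$ is continuous. In your setting $(w_k-w_j)^p$ is not continuous (both $w_k$ and $w_j$ are only upper semicontinuous), and ``approximating $(u-w_j)^p$ from below by continuous functions'' does not help: such an approximant $g_l$ satisfies $g_l\le(u-w_j)^p$, not $g_l\le(w_k-w_j)^p$, so you cannot bound $\int g_l\,\HH(w_k)$ by $\int(w_k-w_j)^p\HH(w_k)$. The inequality you want, $\int(u-w_j)^p\HH(u)\le\liminf_k\int(w_k-w_j)^p\HH(w_k)$, does not follow from weak convergence alone.

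The fix is simple and in fact bypasses Proposition~\ref{weakconvergence} entirely: since $w_j\le u$, Proposition~\ref{basic}(6) gives
\[
\Jp(w_j,u)^{p+m}\le 2\int_\Omega(u-w_j)^p\HH(w_j)=2\lim_{k\to\infty}\int_\Omega(w_k-w_j)^p\HH(w_j)\le 2\limsup_{k\to\infty}\Jp(w_j,w_k)^{p+m},
\]
the middle equality by monotone convergence against the fixed measure $\HH(w_j)$. This, together with your Cauchy bound on $\Jp(w_j,w_k)$, forces $\Jp(w_j,u)\to0$. (A minor point: since $\rho\le A\Jp^{\epsilon}$, your telescoping yields $\sum_i\Jp(u_i,u_{i+1})^{\epsilon}$, not $\sum_i\Jp(u_i,u_{i+1})^{(p+m)\epsilon}$; choose the subsequence accordingly.)
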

\begin{proof}
Let $\{\varphi_j\}\subset \mathcal E_{p,m}(\Omega)$ be a Cauchy sequence. After choosing a subsequence we may assume that $\Jp(\varphi_j,\varphi_{j+1})\leq \frac {1}{3(2C)^{j+3}}$ for $j\in \mathbb N$, where $C$ is the constant from quasi-triangle inequality. From~\cite[Theorem~5.2]{ACH} it follows that for each $\varphi_j$ there exists decreasing sequence of continuous functions $\mathcal E_{0,m}(\Omega)\cap \mathcal C(\overline \Omega)\ni\psi_j^k\searrow \varphi_j$, $k\to \infty$. Then we can choose $u_j=\psi_j^{k(j)}$ such that
\begin{equation}\label{8}
\Jp(u_j,\varphi_j)\leq  \frac {1}{3(2C)^{j+3}},
\end{equation}
see Proposition~\ref{monotone}. Observe that $\{u_j\}\subset \mathcal E_{p,m}(\Omega)$ is also a Cauchy sequence. For each $j\in \mathbb N$ we have
\begin{multline}\label{3}
\Jp(u_j,u_{j+1})\leq C^2(\Jp(u_j,\varphi_j)+\Jp(\varphi_j,\varphi_{j+1})+\Jp(\varphi_{j+1},u_{j+1}))\leq \\ \frac {3C^2}{3(2C)^{j+3}}\leq \frac {1}{(2C)^{j+1}}.
\end{multline}
From the quasi-triangle inequality together with (\ref{3}) we get
\begin{multline}\label{4}
\Jp(0,u_j)\leq C\Jp(0,u_1)+C^2\Jp(u_1,u_2)+\dots+C^j\Jp(u_{j-1},u_j)\leq \\
C\Jp(0,u_1)+\frac {C^2}{(2C)^2}+\dots+\frac {C^j}{(2C)^{j}}\leq C\Jp(0,u_1)+1.
\end{multline}
Set $v_{j,k}=\max(u_j,\dots,u_{k})$, for $k\geq j$. Since $v_{j,k}\in \mathcal E_{p,m}(\Omega)$, we can use Proposition~\ref{basic} and (\ref{3}) to arrive at
\begin{multline}\label{5}
\Jp(u_j,v_{j,k})=\Jp(u_j,\max(u_j,v_{j+1,k}))\leq \Jp(u_j,v_{j+1,k})\leq \\
C(\Jp(u_j,u_{j+1})+\Jp(u_{j+1},v_{j+1,k}))= \\ C(\Jp(u_j,u_{j+1})+\Jp(u_{j+1},\max(u_{j+1},v_{j+2,k})))\leq \\
C\Jp(u_j,u_{j+1})+C\Jp(u_{j+1},v_{j+2,k})\leq \\
C\Jp(u_j,u_{j+1})+C^2\Jp(u_{j+1},u_{j+2})+C^2\Jp(u_{j+2},v_{j+2,k})\leq \dots \leq \\
\sum_{l=0}^{k-j-1}C^{l+1}\Jp(u_{j+l},u_{j+l+1})\leq \sum_{l=0}^{k-j-1}C^{l+1}\frac {1}{(2C)^{j+l+1}}\leq \frac {1}{2^j}.
\end{multline}
From (\ref{4}), and (\ref{5}), it now follows
\begin{multline}\label{6}
\Jp(0,v_{j,k})\leq C(\Jp(0,u_j)+\Jp(u_j,u_{v_{j,k}}))\leq \\ C\left(C\Jp(0,u_1)+1+\frac {1}{2^j}\right)\leq
C^2(\Jp(u_1,0)+2).
\end{multline}
The sequence $v_{j,k}$ is increasing in $k$, and therefore it follows from (\ref{6}) that \newline $\sup_{k}e_{p,m}(v_{j,k})<\infty$. Hence, $v_j=\left(\lim_{k\to \infty}v_{j,k}\right)^*\in \mathcal E_{p,m}(\Omega)$. Furthermore, $v_j$ is a decreasing sequence, $v_j\searrow u=(\limsup_{j\to \infty}u_j)^*$. Again using (\ref{6}) we can conclude $\sup_{j}e_{p,m}(v_j)<\infty$. Thus, $u\in \mathcal E_{p,m}(\Omega)$.
We do not know if the quasimetric $\Jp$ is continuous under increasing sequences, but instead we can obtain some estimates. The monotone convergence theorem, and Proposition~\ref{weakconvergence} implies
\begin{multline}\label{7}
\liminf_{k\to \infty}\Jp(u_j,v_{j,k})^{p+m}=\liminf_{k\to \infty}\int_{\Omega}(v_{j,k}-u_j)^p\HH(v_{j,k})+ \\ \lim_{k\to \infty}\int_{\Omega}(v_{j,k}-u_j)^p\HH(u_j)\geq
\int_{\Omega}(v_j-u_j)^p\HH(v_j)+\int_{\Omega}(v_j-u_j)^p\HH(u_j)= \\ \Jp(u_j,v_j)^{p+m}.
\end{multline}
Hence, $\Jp(u_j,v_j)\to 0$, as $j\to \infty$, by using (\ref{5}) and (\ref{7}). Finally, $\varphi_j$ tends to $u$ in the quasimetric $\Jp$, since Proposition~\ref{monotone}, and (\ref{8}) yields
\[
\Jp(\varphi_j, u)\leq C^2(\Jp(u_j,v_j)+\Jp(v_j,u)+\Jp(u_j,\varphi_j))\to 0,
\]
as $j\to \infty$.
\end{proof}

\section{Convergence in the space $(\mathcal E_{p,m}(\Omega),\Jp)$}

 In this section we shall continue to study the convergence in $(\mathcal E_{p,m}(\Omega),\Jp)$. From Proposition~\ref{monotone} we know that that quasimetric $\Jp$ is continuous under decreasing sequences, and if $p=1$, then we also know continuity under increasing sequences. A summary of this section is as follows:

\smallskip

\begin{enumerate}\itemsep2mm

\item If $\Jp(u_j,u)\to 0$, as $j\to \infty$, then $u_j\to u$ in $L^{p+m}(\Omega)$ (Proposition~\ref{weak}).

\item If $\Jp(u_j,u)\to 0$, as $j\to \infty$,  then $u_j\to u$ in capacity $\operatorname {cap}_m$ (Proposition~\ref{cap}).

\item The inverse implications of the above results are not, in general valid (Example~\ref{conv_example}).

\item If $\Jp(u_j,u)\to 0$, as $j\to \infty$, then $\HH(u_j)\to \HH(u)$ weakly (Proposition~\ref{prop_weak}).
\end{enumerate}

\begin{proposition}\label{weak} Let $n\geq 2$, $1\leq m\leq n$, and assume that $\Omega\subset \mathbb C^n$ is a $m$-hyperconvex domain and $u_j,u\in \mathcal E_{p,m}(\Omega)$. If $\Jp(u_j,u)\to 0$, $j\to \infty$, then $u_j\to u$, $j\to \infty$, in  $L^{p+m}(\Omega)$.
\end{proposition}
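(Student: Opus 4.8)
The plan is to compare the $L^{p+m}$-norm with the quasimetric $\Jp$ through a Sobolev-type integration-by-parts inequality. Since $\beta^n$ is a fixed constant multiple of the Lebesgue measure on the bounded domain $\Omega$, it is enough to show that $\int_\Omega|u_j-u|^{p+m}\beta^n\to 0$ as $j\to\infty$. Writing $w_j=\max(u_j,u)\in\mathcal E_{p,m}(\Omega)$, a pointwise check gives $|u_j-u|=(w_j-u)+(w_j-u_j)$ with both summands nonnegative, so by convexity of $t\mapsto t^{p+m}$ (recall $p+m\ge 1$)
\[
\int_\Omega|u_j-u|^{p+m}\beta^n\le 2^{p+m-1}\left(\int_\Omega(w_j-u)^{p+m}\beta^n+\int_\Omega(w_j-u_j)^{p+m}\beta^n\right).
\]
Moreover, Proposition~\ref{basic}(4) gives $\Jp(u_j,u)^{p+m}=\Jp(u,w_j)^{p+m}+\Jp(u_j,w_j)^{p+m}$, so the hypothesis forces both $\Jp(u,w_j)\to 0$ and $\Jp(u_j,w_j)\to 0$. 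Thus the proposition will follow once the two integrals above are dominated by these quantities.

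The heart of the matter is the following estimate: there is a constant $C=C(p,m,\Omega)$ such that for all $a,b\in\mathcal E_{p,m}(\Omega)$ with $a\le b\le 0$ one has
\[
\int_\Omega(b-a)^{p+m}\beta^n\le C\int_\Omega(b-a)^p\HH(a)\le C\,\Jp(a,b)^{p+m},
\]
the last inequality holding since $\HH(a)\le\HH(a)+\HH(b)$. Setting $g=b-a\ge 0$ and $R_0^2=\sup_\Omega|z|^2$, I would establish it first for bounded $a,b\in\mathcal E_{0,m}(\Omega)$, where $g$ vanishes on $\partial\Omega$ so that all boundary terms disappear, and then pass to general $\mathcal E_{p,m}(\Omega)$ by approximating $a,b$ from above by decreasing sequences in $\mathcal E_{0,m}(\Omega)$ and invoking monotone convergence together with the continuity of $\HH$ along decreasing sequences. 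The estimate is obtained by successively trading a factor $\beta=dd^c|z|^2$ for a factor $dd^ca$. For a closed positive current $T$ of the appropriate bidegree and $s>0$, one integration by parts gives
\[
\int_\Omega g^{s+1}\,\beta\wedge T=\int_\Omega|z|^2\,dd^c(g^{s+1})\wedge T,
\]
and expanding $dd^c(g^{s+1})=(s+1)g^s(dd^cb-dd^ca)+(s+1)s\,g^{s-1}dg\wedge d^cg$ produces three terms. The decisive point is that the gradient term is resolved by the Xing-type identity
\[
\int_\Omega g^{s-1}dg\wedge d^cg\wedge T=\tfrac1s\int_\Omega g^s(dd^ca-dd^cb)\wedge T,
\]
which is precisely the mechanism behind Proposition~\ref{cp xing}. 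Bounding $|z|^2\le R_0^2$, discarding the sign-definite term $-(s+1)\int_\Omega|z|^2g^s\,dd^ca\wedge T\le 0$, and combining yields the clean one-step bound $\int_\Omega g^{s+1}\,\beta\wedge T\le(s+1)R_0^2\int_\Omega g^s\,dd^ca\wedge T$. Iterating this $m$ times with $T=(dd^ca)^k\wedge\beta^{n-k-1}$ and $s=p+m-1-k$ for $k=0,\dots,m-1$ — the positivity of $dd^cb\wedge(dd^ca)^k\wedge\beta^{n-k-1}$ and of $(dd^ca)^{k+1}\wedge\beta^{n-k-1}$ being guaranteed by $m$-subharmonicity whenever $k+1\le m$ — gives the displayed inequality with $C=R_0^{2m}\prod_{i=0}^{m-1}(p+m-i)$.

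With this inequality in hand the conclusion is immediate: applying it to the pairs $a=u\le b=w_j$ and $a=u_j\le b=w_j$ yields
\[
\int_\Omega(w_j-u)^{p+m}\beta^n\le C\,\Jp(u,w_j)^{p+m}\quad\text{and}\quad\int_\Omega(w_j-u_j)^{p+m}\beta^n\le C\,\Jp(u_j,w_j)^{p+m},
\]
and both right-hand sides tend to $0$ by the first paragraph. Substituting into the convexity bound shows $\int_\Omega|u_j-u|^{p+m}\beta^n\to 0$, that is, $u_j\to u$ in $L^{p+m}(\Omega)$. I expect the entire difficulty to sit inside the Sobolev-type inequality: justifying the repeated integration by parts (the vanishing of the boundary terms, for which the reduction to bounded functions in $\mathcal E_{0,m}(\Omega)$ and a careful approximation are needed) and checking at each step the $m$-positivity that makes the discarded terms sign-definite and keeps every intermediate integral finite.
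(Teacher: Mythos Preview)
Your proposal is correct and follows essentially the same architecture as the paper's proof: both split $|u_j-u|$ via $w_j=\max(u_j,u)$, observe from Proposition~\ref{basic} that $\Jp(u,w_j)$ and $\Jp(u_j,w_j)$ both tend to $0$, and then control $\int_\Omega(w_j-u)^{p+m}\,dV$ and $\int_\Omega(w_j-u_j)^{p+m}\,dV$ by the corresponding $\Jp$-quantities. The one substantive difference is that the paper simply \emph{cites} the key Sobolev-type estimate as ``B{\l}ocki's inequality'' (with a test function $\varphi\in\mathcal E_{0,m}$ solving $\HH(\varphi)=dV_{2n}$, giving the constant $(p+m)\cdots(p+1)\|\varphi\|_\infty^m$), whereas you derive the analogous bound $\int_\Omega(b-a)^{p+m}\beta^n\le R_0^{2m}(p+m)\cdots(p+1)\int_\Omega(b-a)^p\HH(a)$ from scratch by iterated integration by parts against $|z|^2$; the cancellation of the $dd^cb$-terms after invoking the Xing identity is exactly the mechanism behind B{\l}ocki's inequality, so your ``Sobolev-type inequality'' is that result with an explicit proof and constant.
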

\begin{proof} Recall that $\mathcal E_{p,m}(\Omega)\subset L^{p+m}(\Omega)$ (see e.g.~\cite{AC2020}). Let $\varphi\in \mathcal E_{0,m}(\Omega)$ be such that $\HH(\varphi)=dV_{2n}$. Assume that $\Jp(u_j,u)\to 0$, as $j\to \infty$. By Proposition~\ref{basic} we get
\[
\Jp(\max (u_j,u),u_j)\leq \Jp(u_j,u), \ \ \text {and} \ \ \Jp(\max(u_j,u),u)\leq \Jp(u_j,u),
\]
which implies that
\begin{equation}\label{10}
\Jp(\max (u_j,u),u_j)\to 0, \ \ \Jp(\max (u_j,u),u)\to 0, \ \ j\to \infty.
\end{equation}
Thanks to B\l ocki's inequality (see e.g.~\cite{L3, thien2, WanWang2}), together with (\ref{10}), we obtain
\begin{multline*}
\int_{\Omega}|u_j-u|^{p+m}dV_{2n}=\int_{\Omega}|u_j-u|^{p+m}\HH(\varphi)=\\
\int_{\{u_j<u\}}(u-u_j)^{p+m}\HH(\varphi)+\int_{\{u<u_j\}}(u_j-u)^{p+m}\HH(\varphi)=\\
\int_{\{u_j<u\}}(\max(u,u_j)-u_j)^{p+m}\HH(\varphi)+\int_{\{u<u_j\}}(\max(u_j,u)-u)^{p+m}\HH(\varphi)\leq \\
\int_{\Omega}(\max(u,u_j)-u_j)^{p+m}\HH(\varphi)+\int_{\Omega}(\max(u_j,u)-u)^{p+m}\HH(\varphi)\leq \\
(p+m)\dots(p+1)\|\varphi\|_{\infty}^m\Bigg(\int_{\Omega}(\max(u,u_j)-u_j)^{p}\HH(u_j)+\\
\int_{\Omega}(\max(u_j,u)-u)^{p}\HH(u)\Bigg)\leq \\
(p+m)\dots(p+1)\|\varphi\|_{\infty}^m\left(\Jp(\max (u_j,u),u_j)^{p+m}+\Jp(\max (u_j,u),u)^{p+m}\right)\to 0,
\end{multline*}
as $j\to\infty$.
\end{proof}

 With $u_j\to u$ in capacity $\operatorname{cap}_m$  we mean  that for any $K\Subset \Omega$, and any $\epsilon>0$, it holds
\[
\lim_{j\to \infty}\operatorname{cap}_m\left(K\cap\{z\in \Omega: |u_j(z)-u(z)|>\epsilon\}\right)=0,
\]
where the capacity of a Borel set $B\Subset \Omega$ is defined by
\[
\operatorname{cap}_m(B)=\sup\left\{\int_{B}\HH(\varphi): \varphi\in \mathcal {SH}_m(\Omega); \ -1\leq \varphi\leq 0\right\}.
\]

\begin{proposition}\label{cap} Let $n\geq 2$, $1\leq m\leq n$, and assume that $\Omega\subset \mathbb C^n$ is a $m$-hyperconvex domain, and  $u_j,u\in \mathcal E_{p,m}(\Omega)$. If $\Jp(u_j,u)\to 0$, as $j\to\infty$, then $u_j\to u$ in capacity $\operatorname {cap}_m$.
\end{proposition}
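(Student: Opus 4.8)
The plan is to reduce the claim to a one-sided capacity estimate and then combine it with a Chebyshev inequality. Fix $K\Subset\Omega$ and $\epsilon>0$. Since $\{|u_j-u|>\epsilon\}=\{u_j<u-\epsilon\}\cup\{u<u_j-\epsilon\}$ and $\operatorname{cap}_m$ is monotone and subadditive, it suffices to prove that $\operatorname{cap}_m(K\cap\{u_j<u-\epsilon\})\to0$; the set $\{u<u_j-\epsilon\}$ is handled symmetrically by interchanging the roles of $u$ and $u_j$. The two ingredients are a comparison-principle capacity estimate and the elementary fact, immediate from Definition~\ref{def_Jp}, that $\int_{\Omega}|u_j-u|^p\HH(u_j)\le\Jp(u_j,u)^{p+m}$ and $\int_{\Omega}|u_j-u|^p\HH(u)\le\Jp(u_j,u)^{p+m}$.

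The core step is the estimate, valid for every $s>0$,
\[
s^m\operatorname{cap}_m(K\cap\{u_j<u-2s\})\le\int_{\{u_j<u-s\}}\HH(u_j).
\]
To obtain it, let $\varphi\in\mathcal{SH}_m(\Omega)$ with $-1\le\varphi\le0$ be a competitor and put $g=\max(u_j,u+s\varphi-s)$. Since $u_j\le g\le0$, we have $g\in\mathcal E_{p,m}(\Omega)$. The constant shift by $-s$ is the decisive device: on the set $\{u_j<g\}=\{u_j<u+s\varphi-s\}$ we have, because $\varphi\le0$, that $u_j<u+s\varphi-s\le u-s$, hence $\{u_j<g\}\subseteq\{u_j<u-s\}$; while because $\varphi\ge-1$ we have $u+s\varphi-s\ge u-2s$, hence $\{u_j<u-2s\}\subseteq\{u_j<g\}$. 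By Theorem~\ref{thm_prel}(5), the invariance of $\HH$ under additive constants, and the positivity of the mixed terms in $(dd^c u+s\,dd^c\varphi)^m\wedge\be$, one gets $\HH(g)\ge s^m\HH(\varphi)$ on $\{u_j<g\}$. Finally the comparison principle (Theorem~\ref{thm_prel}(1)), applied to $u_j,g\in\mathcal E_{p,m}(\Omega)$, gives $\int_{\{u_j<g\}}\HH(g)\le\int_{\{u_j<g\}}\HH(u_j)$. Chaining these with the two inclusions yields $s^m\int_{K\cap\{u_j<u-2s\}}\HH(\varphi)\le\int_{\{u_j<u-s\}}\HH(u_j)$, and taking the supremum over $\varphi$ proves the estimate.

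Now I make the right-hand side small. On $\{u_j<u-s\}$ one has $u-u_j>s$, so $\int_{\{u_j<u-s\}}\HH(u_j)\le s^{-p}\int_{\Omega}|u_j-u|^p\HH(u_j)\le s^{-p}\Jp(u_j,u)^{p+m}$. Choosing $s=\epsilon/2$ gives $\operatorname{cap}_m(K\cap\{u_j<u-\epsilon\})\le 2^{m+p}\epsilon^{-(m+p)}\Jp(u_j,u)^{p+m}\to0$. The symmetric argument, with $g=\max(u,u_j+s\varphi-s)$ and $\HH(u)$ in place of $\HH(u_j)$, bounds $\operatorname{cap}_m(K\cap\{u<u_j-\epsilon\})$; adding the two bounds and letting $j\to\infty$ completes the proof.

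I expect the main difficulty to be producing the capacity estimate with \emph{matched levels} — with the right-hand integral over $\{u_j<u-s\}$ rather than over the larger set $\{u_j<u\}$ that the comparison principle produces directly. This matching is essential, since $\HH(u_j)$ may carry substantial mass on the region where $u_j$ is only slightly below $u$, so without it Chebyshev extracts no smallness. The shift by the constant $-s$ is exactly what enforces the inclusion $\{u_j<g\}\subseteq\{u_j<u-s\}$, while the use of $\max$ keeps $g$ inside $\mathcal E_{p,m}(\Omega)$ so that Theorem~\ref{thm_prel} applies verbatim without any boundary subtlety.
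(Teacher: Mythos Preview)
Your proof is correct and the capacity estimate you derive is a standard and efficient device. One small caveat: Theorem~\ref{thm_prel}(5) is stated in the paper only for $u,v\in\mathcal E_{p,m}(\Omega)$, whereas your second argument $u+s\varphi-s$ is never in $\mathcal E_{p,m}(\Omega)$ (it is $\le -s$ everywhere). This is not a mathematical obstacle --- the plurifine locality and Demailly-type inequality hold in the broader class where $\HH$ is defined (see the reference~\cite{L3}), and alternatively you could restrict the competitor $\varphi$ to $\mathcal E_{0,m}(\Omega)$ and invoke the locality result directly --- but you should flag that you are using the more general statement rather than Theorem~\ref{thm_prel} verbatim.

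Your route is genuinely different from the paper's. The paper argues indirectly: it forms the upper envelope $\varphi_j=(\sup_{k\ge j}u_k)^*$, uses Proposition~\ref{weak} to identify its decreasing limit with $u$, gets $\varphi_j\to u$ in capacity from monotonicity, and then handles the remaining piece $\{|\varphi_j-u_j|>\epsilon/2\}$ via B\l ocki's inequality together with $\Jp(u_j,\varphi_j)\to0$. Your argument bypasses the envelope, the $L^{p+m}$ result, and B\l ocki's inequality entirely, trading them for a single comparison-principle step with the shifted competitor. What you gain is a clean quantitative bound $\operatorname{cap}_m(K\cap\{|u_j-u|>\epsilon\})\le 2^{m+p+1}\epsilon^{-(m+p)}\Jp(u_j,u)^{p+m}$, which is strictly more than the qualitative convergence the paper proves; the paper's route, on the other hand, stays entirely inside results already recorded in the text.
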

\begin{proof} Set $\varphi_j=(\sup_{k\geq j}u_j)^*$. Then it follows
$\varphi_j\searrow v$, and $\varphi_j\geq u_j$. Since $\varphi_j$ is decreasing sequence, we get from~\cite{L3} that $\varphi_j\to v$ in capacity $\operatorname{cap}_m$. Theorem~\ref{thm_holder} yields
\begin{multline*}
e_{p,m}(\varphi_j)^{\frac 1{p+m}}\leq D(p,m)^{\frac {1}{p}}e_{p,m}(u_j)^{\frac {1}{p+m}}=D(p,m)^{\frac {1}{p}}\Jp(u_j,0)\leq \\
D(p,m)^{\frac {1}{p}}C(\Jp(u_j,u)+\Jp(u,0)),
\end{multline*}
which means that
\[
\sup_{j}e_{p,m}(\varphi_j)^{\frac {1}{p+m}}\leq D(p,m)^{\frac {1}{p}}\sup_{j}e_{p,m}(u_j)^{\frac {1}{p+m}}<\infty.
\]
Hence, $v\in \mathcal E_{p,m}(\Omega)$. Moreover by Proposition~\ref{weak} we know that $u_j\to u$ in $L^{p+m}(\Omega)$, $j\to \infty$, and therefore also $\varphi_j\to u$ in $L^{p+m}(\Omega)$. This implies that $u=v$. We have by Proposition~\ref{monotone}
\begin{equation}\label{12}
\Jp(u_j,\varphi_j)\leq C(\Jp(u_j,u)+\Jp(\varphi_j,u))\to 0,
\end{equation}
Now observe that
\begin{multline*}
\{z\in \Omega: |u_j(z)-u(z)|>\epsilon\}\subset \\ \left\{z\in \Omega: |\varphi_j(z)-u_j(z)|>\frac {\epsilon}{2}\right\}\cup \left\{z\in \Omega: |\varphi_j(z)-u(z)|>\frac {\epsilon}{2}\right\}.
\end{multline*}
Therefore it is sufficient to prove
\[
\lim_{j\to \infty}\operatorname{cap}_m\left(K\cap\left\{z\in \Omega: |\varphi_j(z)-u_j(z)|>\frac {\epsilon}{2}\right\}\right)=0.
\]
Let  $\psi\in \mathcal E_{0,m}(\Omega)$ be such that $-1\leq \psi\leq 0$, and $K\Subset \Omega$. Then by B\l ocki's inequality (see e.g. ~\cite{L3, thien2, WanWang2}) and by (\ref{12}) we get
\begin{multline*}
\int_{K\cap\{z\in \Omega: |\varphi_j(z)-u_j(z)|>\frac {\epsilon}{2}\}}\HH(\psi)\leq \\ \frac{2^{m+p}}{\epsilon ^{m+p}}\int_{K\cap\{z\in \Omega: |\varphi_j(z)-u_j(z)|>\frac {\epsilon}{2}\}}(\varphi_j-u_j)^{m+p}\HH(\psi)\leq\\
\frac{2^{m+p}}{\epsilon ^{m+p}}\int_{\Omega}(\varphi_j-u_j)^{m+p}\HH(\psi)\leq \\ \frac{2^{m+p}}{\epsilon ^{m+p}}(p+m)\dots(p+1)\|\psi\|^m_{\infty}\int_{\Omega}(\varphi_j-u_j)^p\HH(u_j)\leq \\
\frac{2^{m+p}(p+m)\dots(p+1)\|\psi\|^m_{\infty}}{\epsilon ^{m+p}}\Jp(u_j,\varphi_j)^{p+m}\to 0,
\end{multline*}
as $j\to \infty$.
\end{proof}

Note that the reverse implications in Proposition~\ref{weak} and Proposition~\ref{cap} are not, in general true. The following example is taken from~\cite{CAV}.

\begin{example}\label{conv_example} Let
\[
u_j(z)=\max\left(j^{\frac pn}\ln |z|,-\frac 1j\right)
\]
be a plurisubharmonic function defined in the unit ball $\mathbb B$ in $\mathbb C^n$, $n\geq 2$. Then $u_j\in \mathcal E_{0,n}(\mathbb B)$,
\[
\Jp(u_j,0)^{p+n}=e_{p,n}(u_j)=(2\pi)^n,
\]
but $u_j\to 0$ in capacity, and $u_j\to 0$ in $L^{p+n}(\mathbb B)$.\hfill{$\Box$}
\end{example}

At the end of this section, we shall prove that convergence in $\Jp$ implies weak convergence of the complex Hessian measures. We shall need the following well-known lemma.

\begin{lemma}\label{lem1} Let $n\geq 2$, $1\leq m\leq n$, and assume that $\Omega\subset \mathbb C^n$ is a $m$-hyperconvex domain and  $u_j,u\in \mathcal E_{p,m}(\Omega)$ be such that $\sup_je_{p,m}(u_j)<\infty$. Then, if $u_j\to u$ in capacity $\operatorname {cap}_m$, then $\HH(u_j)\to \HH(u)$ weakly, as $j\to \infty$.
\end{lemma}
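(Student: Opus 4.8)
The plan is to establish weak convergence of the Hessian measures by combining a uniform energy bound with convergence in capacity, using the standard quasicontinuity machinery for $m$-subharmonic functions. Since weak convergence of measures is tested against a fixed continuous function $g$ with compact support in $\Omega$, I would reduce the problem to showing that $\int_\Omega g\,\HH(u_j)\to\int_\Omega g\,\HH(u)$, and it suffices to treat $g\geq 0$ by splitting into positive and negative parts.

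First I would localize: multilinearity of the Hessian operator lets me write the difference $\HH(u_j)-\HH(u)$ as a telescoping sum
\[
\HH(u_j)-\HH(u)=\sum_{k=0}^{m-1} dd^c(u_j-u)\wedge (dd^c u_j)^k\wedge (dd^c u)^{m-1-k}\wedge\be,
\]
so that, after integrating against $g$ and moving $dd^c$ onto $g$ via integration by parts, each term becomes $\int_\Omega (u_j-u)\,dd^c g\wedge (dd^c u_j)^k\wedge (dd^c u)^{m-1-k}\wedge\be$. The boundary terms vanish because the functions lie in $\E_{p,m}(\Omega)$ and hence vanish in the generalized sense on $\partial\Omega$.

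The key step is then to control each of these mixed integrals. Here I would use the uniform energy bound $\sup_j e_{p,m}(u_j)<\infty$ together with the Hölder-type estimate of Theorem~\ref{thm_holder} to get a uniform bound on the masses of the mixed currents $(dd^c u_j)^k\wedge (dd^c u)^{m-1-k}\wedge\be$ on compact subsets, so that these positive currents have locally uniformly bounded mass. Convergence in capacity $\operatorname{cap}_m$ then upgrades to the statement that $u_j\to u$ quasi-uniformly off an arbitrarily small capacity set; since the test current $dd^c g\wedge (dd^c u_j)^k\wedge(dd^c u)^{m-1-k}\wedge\be$ puts no mass on sets of small capacity (these currents do not charge $m$-polar or small-capacity sets), the factor $u_j-u$ can be made uniformly small on the bulk of the mass while the small exceptional set contributes negligibly by the uniform mass bound.

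I expect the main obstacle to be the last point made rigorous: showing that convergence in capacity, combined only with uniform energy bounds, forces each mixed integral $\int_\Omega (u_j-u)\,dd^c g\wedge (dd^c u_j)^k\wedge(dd^c u)^{m-1-k}\wedge\be\to 0$. The difficulty is that the integrating current itself varies with $j$ through the factors $(dd^c u_j)^k$, so one cannot simply invoke a single dominating measure. The standard resolution is a Cegrell-type argument: decompose $\Omega$ into the region where $|u_j-u|\leq\delta$ and its complement, bound the first region by $\delta$ times the uniform mass, and bound the second by splitting off a small-capacity set and using that the currents do not charge it; letting $j\to\infty$ and then $\delta\to 0$ closes the estimate. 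This is precisely the classical weak-continuity argument of Bedford–Taylor adapted to the $m$-Hessian setting, and the excerpt's citations to~\cite{L3} and~\cite{thien2} indicate the expected reference.
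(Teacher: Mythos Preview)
Your telescoping-and-integration-by-parts approach is different from the paper's, and as written it has a real gap at the step you yourself flag as the main obstacle.

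The paper does \emph{not} telescope $\HH(u_j)-\HH(u)$ directly. Instead it truncates: writing $u_j^k=\max(u_j,-k)$ and $u^k=\max(u,-k)$, it splits
\[
\HH(u_j)-\HH(u)=\bigl(\HH(u_j)-\HH(u_j^k)\bigr)+\bigl(\HH(u_j^k)-\HH(u^k)\bigr)+\bigl(\HH(u^k)-\HH(u)\bigr).
\]
The middle term is handled by the known fact (cited from~\cite{L3}) that for \emph{uniformly bounded} $m$-subharmonic functions, convergence in capacity implies weak convergence of the Hessian measures. The third term goes to zero because $u^k\searrow u$. The first term is telescoped and then controlled, uniformly in $j$, by $k^{-p}$ times a constant depending only on $\sup_j e_{p,m}(u_j)$, via the H\"older-type energy inequality (Theorem~\ref{thm_holder}). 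Letting $k\to\infty$ after $j\to\infty$ finishes the proof.

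Your argument tries to bypass the truncation, and the step that fails is the assertion that the mixed currents $dd^c g\wedge(dd^c u_j)^k\wedge(dd^c u)^{m-1-k}\wedge\be$ ``put no mass on sets of small capacity.'' For bounded $m$-subharmonic functions this is true and quantitative (mass is dominated by capacity), but for unbounded $u_j,u\in\mathcal{E}_{p,m}(\Omega)$ it is not: there is no inequality of the form $T_j(E)\leq C\,\operatorname{cap}_m(E)$ available, and the currents can concentrate heavily on sets of small capacity (think of the examples in Example~\ref{conv_example}, where $\HH(u_j)$ has full mass on shrinking balls). Not charging $m$-polar sets is a much weaker statement and does not give the quantitative control you need. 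Worse, even if you had such a mass bound, the integrand $|u_j-u|$ is itself unbounded on the exceptional set, so ``small mass times bounded integrand'' does not apply either.

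The entire difficulty of the lemma is precisely the passage from bounded to unbounded functions; for bounded functions the result is already in~\cite{L3}. The paper's truncation isolates this difficulty and dispatches it with the uniform energy bound, which is exactly the hypothesis $\sup_j e_{p,m}(u_j)<\infty$. Your outline uses that hypothesis only to get uniform total mass, which is not enough. To repair your approach you would in effect have to reinvent the truncation estimate: control $\int_{\{u_j<-k\}}(-u_j)\,T_j$ uniformly in $j$ via the energy bound, which is the paper's computation for $\mu_{j,k}^1$.
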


\begin{proof} Set
\[
A=e_{p,m}(u)+\sup_je_{p,m}(u_j)<\infty.
\]
Then for all $k,j$, we have
\[
e_{p,m}(\max(u_j,-k))\leq D(p,m)^{\frac {m+p}{p}} e_{p,m}(u_j)\leq D(m,p)^{\frac {m+p}{p}} A.
\]
Consider the following decomposition
\begin{multline*}
\HH(u_j)-\HH(u)=\left(\HH(u_j)-\HH(\max(u_j,-k))\right)+\\
\left(\HH(\max(u_j,-k))-\HH(\max(u,-k))\right)+\left(\HH(\max(u,-k))-\HH(u)\right)=\\
=\mu_{j,k}^1+\mu_{j,k}^2+\mu_{k}^3.
\end{multline*}
Furthermore,  since $u_j\to u$ in capacity $\operatorname {cap}_m$, then for all $k$ we get $\max(u_j,-k)\to \max(u,-k)$, in capacity $\operatorname {cap}_m$. All function are uniformly bounded, so by~\cite{L3}, $\mu_{j,k}^2\to 0$ weakly as $j\to \infty$. Since, $\max(u,-k)$ is decreasing to $u$, as $k\to \infty$, and therefore $\mu_k^3\to 0$ weakly, as $k\to \infty$.

To finish the proof we have to show that $\mu_{j,k}^1\to 0$, as $k\to \infty$ and uniformly on $j$. Let $\alpha\in \mathcal C_0^{\infty}(\Omega)$, and we
shall use the temporary notation $T_s=(dd^c\max(u_j,-k))^{m-s-1}\wedge\be$. Then
\begin{multline*}
\left|\int_{\Omega}\alpha \mu^1_{j,k}\right|=
\left|\sum_{s=0}^{m-1}\int_{\Omega}\alpha (dd^cu_j)^s\wedge(dd^cu_j-dd^c\max(u_j,-k))\wedge T_s\right|\leq\\
\|\alpha\|_{\infty}k^{-p}\sum_{s=0}^{m-1}\int_{\{u_j\leq -k\}}(-u_j)^p(dd^cu_j)^{s+1}\wedge T_s +\\
\|\alpha\|_{\infty}k^{-p}\sum_{s=0}^{m-1}\int_{\{u_j\leq -k\}}(-u_j)^p(dd^cu_j)^s\wedge (dd^c\max(u_j,-k))^{m-s}\wedge\be \leq \\
\|\alpha\|_{\infty}k^{-p}\sum_{s=0}^{m-1}D(m,p)e_{p,m}(u_j)^{\frac {p+s+1}{p+m}}e_{p,m}(\max(u_j,-k))^{\frac {m-s-1}{p+m}}+ \\
\|\alpha\|_{\infty}k^{-p}\sum_{s=0}^{m-1}D(m,p)e_{p,m}(u_j)^{\frac {p+s}{p+m}}e_{p,m}(\max(u_j,-k))^{\frac {m-s}{p+m}}\leq \\
\|\alpha\|_{\infty}k^{-p}2mD(m,p)^{\frac {m+p}{p}}A\to 0,
\end{multline*}
as $k\to \infty$. The convergence above is uniform in $j$.
\end{proof}

\begin{proposition}\label{prop_weak} Let $n\geq 2$, $1\leq m\leq n$, and assume that $\Omega\subset \mathbb C^n$ is a $m$-hyperconvex domain. If $\Jp(u_j,u)\to 0$, as $j\to \infty$, then $\HH(u_j)\to \HH(u)$ weakly.
\end{proposition}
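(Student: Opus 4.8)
The plan is to combine the two results already established in this section, namely Proposition~\ref{cap} and Lemma~\ref{lem1}. Lemma~\ref{lem1} delivers weak convergence of the Hessian measures as soon as we know two things: that $u_j\to u$ in capacity $\operatorname{cap}_m$, and that the energies $e_{p,m}(u_j)$ are uniformly bounded. The first is precisely the content of Proposition~\ref{cap}, so the only genuine task is to verify the uniform energy bound and then chain the two statements together.

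First I would record the identity $\Jp(u_j,0)^{p+m}=\int_{\Omega}(-u_j)^p\HH(u_j)=e_{p,m}(u_j)$, which holds because the zero function has vanishing Hessian and $u_j\leq 0$, so $|u_j-0|=-u_j$ and $\HH(0)=0$. Applying the quasi-triangle inequality of Lemma~\ref{qm} gives $\Jp(u_j,0)\leq C(\Jp(u_j,u)+\Jp(u,0))$. Since $\Jp(u_j,u)\to 0$ by hypothesis, the right-hand side is bounded uniformly in $j$, and raising to the power $p+m$ yields $\sup_j e_{p,m}(u_j)<\infty$.

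With the uniform energy bound in hand, Proposition~\ref{cap} shows that $u_j\to u$ in capacity $\operatorname{cap}_m$, and Lemma~\ref{lem1} then produces $\HH(u_j)\to \HH(u)$ weakly, which is the assertion. I do not expect a genuine obstacle here: all of the analytic difficulty has already been absorbed into Proposition~\ref{cap} (which rests on B\l ocki's inequality) and into the uniform, $k$-controlled estimate inside Lemma~\ref{lem1}. The single point requiring care is the bookkeeping above, confirming that $\Jp(u_j,u)\to 0$ forces exactly the energy bound needed to legitimately invoke Lemma~\ref{lem1}.
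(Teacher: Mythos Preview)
Your proposal is correct and follows essentially the same route as the paper: establish the uniform energy bound via $e_{p,m}(u_j)^{1/(p+m)}=\Jp(u_j,0)\leq C(\Jp(u_j,u)+\Jp(u,0))$, then invoke Proposition~\ref{cap} and Lemma~\ref{lem1}. The paper's proof is no more than these three lines, so there is nothing to add.
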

\begin{proof} Since
\[
e_{p,m}(u_j)^{\frac 1{p+m}}=\Jp(u_j,0)\leq C(\Jp(u_j,u)+\Jp(u,0)),
\]
it follows
\[
e_{p,m}(u)+\sup_{j}e_{p,m}(u_j)<\infty.
\]
This proof is then concluded by Proposition~\ref{cap}, and Lemma~\ref{lem1}.
\end{proof}

\section{A comparison of different topologies}

In this section, we begin by comparing the quasimetric space $(\mathcal E_{1,m}(\Omega), \operatorname {J}_1)$ with the metric space $(\mathcal E_{1,m}(\Omega),\dd)$ studied in~\cite{ACmetric}. In the second part of this section, we show that the topology generated by $\Jp$ is not comparable with the topology generated by the subspace $\mathcal{E}_{p,m}(\Omega)$ of quasi-normed space $(\delta\mathcal{E}_{p,m}(\Omega), \|\cdot\|_p)$ studied in~\cite{thien}. The presentation of the latter part follows closely Section~7 of~\cite{ACmetric}.

Let us first introduce the necessary definitions and notations and formulate the results that are relevant here. For further information see~\cite{ACmetric}.

\begin{definition}\label{defef} Let $1\leq m\leq n$, and let $\Omega$ be a bounded $m$-hyperconvex domain in $\mathbb C^n$, $n>1$. Fix $w\in \mathcal E_{1,m}(\Omega)$, known as the weight, and define the weighted energy functional $\EE$ by
\begin{equation}\label{energy}
\mathcal E_{1,m}(\Omega)\ni u\mapsto \EE(u)=\frac {1}{m+1}\sum_{j=0}^m\int_{\Omega}(u-w)(dd^cu)^j\wedge(dd^cw)^{m-j}\wedge\be \in \mathbb R.
\end{equation}
\end{definition}

\begin{definition}\label{metric} Let $n\geq 2$, $1\leq m\leq n$, and assume that $\Omega\subset \mathbb C^n$ is a $m$-hyperconvex domain. Fix $w,u,v\in \mathcal E_{1,m}(\Omega)$. Let us define
\[
\dd (u,v)=\EE(u)+\EE(v)-2\EE(\PP(u,v)).
\]
\end{definition}

\begin{theorem}\label{thm_compmetric}
Let $n\geq 2$, $1\leq m\leq n$, and let $\Omega$ be a bounded $m$-hyperconvex domain in $\mathbb C^n$. Then the tuple $(\mathcal E_{1,m}(\Omega),\dd)$ is a complete metric space.
\end{theorem}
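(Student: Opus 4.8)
The plan is to verify the three metric axioms for $\dd$ and then deduce completeness from the already-established completeness of $(\mathcal E_{1,m}(\Omega),\operatorname{J}_1)$ in Theorem~\ref{comp}. The starting point is the \emph{energy-difference formula}: integrating by parts and telescoping the sum in \eqref{energy} gives, for $u,v\in\mathcal E_{1,m}(\Omega)$,
\[
\EE(u)-\EE(v)=\frac{1}{m+1}\sum_{j=0}^{m}\int_\Omega(u-v)(dd^cu)^j\wedge(dd^cv)^{m-j}\wedge\be,
\]
which in particular is independent of the weight $w$. Writing $\dd(u,v)=\big(\EE(u)-\EE(\PP(u,v))\big)+\big(\EE(v)-\EE(\PP(u,v))\big)$ and using $\PP(u,v)\le u$, $\PP(u,v)\le v$ together with the monotonicity of $\EE$ (itself a consequence of the formula above) shows $\dd\ge 0$; symmetry is immediate from $\PP(u,v)=\PP(v,u)$.

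For definiteness, and later for completeness, I would record the two-sided comparison $\dd(u,v)\asymp\operatorname{J}_1(u,v)^{1+m}$. Applying the energy-difference formula to the comparable pair $\PP(u,v)\le u$ and invoking the Xing inequalities of Proposition~\ref{cp xing}, each mixed term is squeezed between the extreme terms $\int(u-\PP(u,v))\HH(u)$ and $\int(u-\PP(u,v))\HH(\PP(u,v))$, so $\EE(u)-\EE(\PP(u,v))\asymp\operatorname{J}_1(u,\PP(u,v))^{1+m}$, and likewise with $v$. Hence $\dd(u,v)\asymp\operatorname{J}_1(u,\PP(u,v))^{1+m}+\operatorname{J}_1(v,\PP(u,v))^{1+m}$; Proposition~\ref{basic}(10) bounds this above by $\operatorname{J}_1(u,v)^{1+m}$, while the quasi-triangle inequality of Lemma~\ref{qm} applied through $\PP(u,v)$ bounds it below. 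Definiteness then follows: $\dd(u,v)=0$ forces $\operatorname{J}_1(u,v)=0$, hence $u=v$ by Proposition~\ref{basic}(2).

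The main obstacle is the triangle inequality, which I would reduce to a submodularity estimate for $\EE$. Setting $a=\PP(u,v)$, $b=\PP(v,w)$, $c=\PP(u,w)$, the inequality $\dd(u,w)\le\dd(u,v)+\dd(v,w)$ is equivalent to $\EE(a)+\EE(b)\le\EE(v)+\EE(c)$. Since $\PP(a,b)\le\min(a,b)\le\min(u,w)$ we have $\PP(a,b)\le c$, so by monotonicity it suffices to prove $\EE(a)+\EE(b)\le\EE(v)+\EE(\PP(a,b))$; and as $a,b\le v$ gives $\EE(\max(a,b))\le\EE(v)$, this in turn follows from the submodularity inequality
\[
\EE(a)+\EE(b)\le\EE(\max(a,b))+\EE(\PP(a,b)).
\]
This is the technical heart. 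I would prove it by expanding both sides with the energy-difference formula and exploiting the minimum principle (Theorem~\ref{mp}), which confines $\HH(\PP(a,b))$ to $\{\PP(a,b)=a\}\cup\{\PP(a,b)=b\}$, so that the contributions reorganize into mixed integrals over $\{a<b\}$ and $\{b<a\}$ that can be compared term by term via the monotonicity of mixed Hessian forms. Controlling these mixed terms is exactly where the difficulty lies.

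Finally, completeness transfers from Theorem~\ref{comp} via the comparison $\dd\asymp\operatorname{J}_1^{1+m}$. A $\dd$-Cauchy sequence is $\operatorname{J}_1$-Cauchy, hence converges in $\operatorname{J}_1$ to some $u\in\mathcal E_{1,m}(\Omega)$ by Theorem~\ref{comp}, and the comparison returns $\dd$-convergence to the same limit. Thus $(\mathcal E_{1,m}(\Omega),\dd)$ is a complete metric space.
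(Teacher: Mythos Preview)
The paper does not supply a proof of this theorem at all: Theorem~\ref{thm_compmetric} is quoted from~\cite{ACmetric} as background for the comparison with $\operatorname{J}_1$. So there is no ``paper's own proof'' to compare against, and your proposal is really an attempt at an independent argument. Your route to completeness is sound and is in fact the natural one from the paper's perspective: the two-sided comparison $\dd\asymp\operatorname{J}_1^{\,1+m}$ is exactly the unlabeled theorem following Theorem~\ref{thm_compmetric}, and once it is in hand, completeness transfers from Theorem~\ref{comp} just as you say. Your derivation of the comparison via the energy--difference formula and the monotonicity of the mixed terms (integration by parts, as in Proposition~\ref{cp xing}) is also correct in outline, and definiteness follows cleanly from it.

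The genuine gap is the triangle inequality. Your reduction to the submodularity inequality
\[
\EE(a)+\EE(b)\le\EE(\max(a,b))+\EE(\PP(a,b))
\]
is correct, but you do not prove this inequality; you only sketch an approach (``reorganize into mixed integrals over $\{a<b\}$ and $\{b<a\}$ \ldots\ compared term by term via the monotonicity of mixed Hessian forms'') and concede that ``controlling these mixed terms is exactly where the difficulty lies.'' That concession is accurate: the minimum principle (Theorem~\ref{mp}) localizes $\HH(\PP(a,b))$, but the corresponding mixed forms $(dd^c\PP(a,b))^j\wedge(dd^c b)^{m-j}$ for $0<j<m$ in the energy--difference formula are \emph{not} localized to the contact sets, and comparing them with the mixed forms for the pair $(a,\max(a,b))$ is not a term-by-term monotonicity. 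In~\cite{ACmetric} (and in Darvas's work for $m=n$) the triangle inequality is obtained by a different mechanism, built on sharp two-sided bounds for $\EE(u)-\EE(v)$ when $u\ge v$ together with contractivity-type estimates for the rooftop operator, rather than a bare submodularity of $\EE$. As written, your proof is incomplete precisely at this step.
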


Now we are in position to compare the metric $\dd$ with the quasimetric $\operatorname{J}_1$.

\begin{theorem} Let $n\geq 2$, $1\leq m\leq n$, and assume that $\Omega\subset \mathbb C^n$ is a $m$-hyperconvex domain. Then for $u,v\in \mathcal E_{1,m}(\Omega)$ it holds
\[
\frac {1}{2^{m+2}C^{m+1}(m+1)}\operatorname{J}_1(u,v)^{m+1}\leq \dd(u,v)\leq \operatorname{J}_1(u,v)^{m+1},
\]
where $C$ is a constant from quasi-triangle inequality.
\end{theorem}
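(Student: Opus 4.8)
The plan is to route both inequalities through the single quantity
\[
\Sigma:=\operatorname{J}_1(\PP(u,v),u)^{m+1}+\operatorname{J}_1(\PP(u,v),v)^{m+1},
\]
which I will show is comparable from both sides to $\dd(u,v)$ and to $\operatorname{J}_1(u,v)^{m+1}$.

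\emph{Key estimate.} The heart of the matter is a comparison between a weighted energy difference and $\operatorname{J}_1$: for $\varphi,u\in\mathcal E_{1,m}(\Omega)$ with $\varphi\leq u$,
\[
\tfrac{1}{2(m+1)}\operatorname{J}_1(\varphi,u)^{m+1}\leq \EE(u)-\EE(\varphi)\leq \operatorname{J}_1(\varphi,u)^{m+1}.
\]
To obtain this I would recall from~\cite{ACmetric} the first-variation identity for $\EE$: along the affine path $u_t=(1-t)\varphi+tu$ one has $\frac{d}{dt}\EE(u_t)=\int_\Omega(u-\varphi)\HH(u_t)$, and expanding $\HH(u_t)$ binomially while evaluating the Beta-integrals $\binom{m}{j}\int_0^1 t^j(1-t)^{m-j}\,dt=\frac{1}{m+1}$ gives
\[
\EE(u)-\EE(\varphi)=\frac{1}{m+1}\sum_{j=0}^{m}T_j,\qquad T_j:=\int_\Omega(u-\varphi)(dd^cu)^j\wedge(dd^c\varphi)^{m-j}\wedge\be .
\]
Integration by parts, with boundary terms vanishing thanks to the boundary behaviour in $\mathcal E_{1,m}(\Omega)$, yields $T_j-T_{j+1}=\int_\Omega d(u-\varphi)\wedge d^c(u-\varphi)\wedge(dd^cu)^j\wedge(dd^c\varphi)^{m-j-1}\wedge\be\geq 0$, so $T_m\leq\cdots\leq T_0$. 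Since $\varphi\leq u$ gives $\operatorname{J}_1(\varphi,u)^{m+1}=T_0+T_m$, we have $\tfrac12\operatorname{J}_1(\varphi,u)^{m+1}\leq T_0\leq\operatorname{J}_1(\varphi,u)^{m+1}$, while $\frac{T_0}{m+1}\leq\frac{1}{m+1}\sum_j T_j\leq T_0$; combining these two chains is exactly the displayed estimate.

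\emph{Assembling the bounds.} As $\PP(u,v)\in\mathcal E_{1,m}(\Omega)$ lies below both $u$ and $v$, I would apply the key estimate to the pairs $(\PP(u,v),u)$ and $(\PP(u,v),v)$ and add, obtaining
\[
\tfrac{1}{2(m+1)}\,\Sigma\leq\dd(u,v)=\big(\EE(u)-\EE(\PP(u,v))\big)+\big(\EE(v)-\EE(\PP(u,v))\big)\leq\Sigma .
\]
For the upper bound in the theorem I would invoke Proposition~\ref{basic}(10), which is precisely $\Sigma\leq\operatorname{J}_1(u,v)^{m+1}$, giving $\dd(u,v)\leq\operatorname{J}_1(u,v)^{m+1}$. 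For the lower bound I would pass through $\PP(u,v)$ via the quasi-triangle inequality (Lemma~\ref{qm}) together with the convexity bound $(a+b)^{m+1}\leq 2^m(a^{m+1}+b^{m+1})$:
\[
\operatorname{J}_1(u,v)^{m+1}\leq C^{m+1}\big(\operatorname{J}_1(u,\PP(u,v))+\operatorname{J}_1(\PP(u,v),v)\big)^{m+1}\leq 2^mC^{m+1}\,\Sigma .
\]
Feeding in $\Sigma\leq 2(m+1)\dd(u,v)$ from the left-hand sandwich yields $\operatorname{J}_1(u,v)^{m+1}\leq 2^{m+1}(m+1)C^{m+1}\dd(u,v)$, which is even slightly stronger than the stated constant $2^{m+2}(m+1)C^{m+1}$.

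\emph{Main obstacle.} The only genuine work lies in the key estimate, specifically the first-variation identity for $\EE$ and the legitimacy of the integration by parts for functions in $\mathcal E_{1,m}(\Omega)$ (vanishing of boundary terms, approximation by bounded functions). Both are inherited from the construction of $\EE$ and $\dd$ in~\cite{ACmetric}, so I would cite them rather than reprove them; the remainder is bookkeeping with Proposition~\ref{basic}(10) and Lemma~\ref{qm}.
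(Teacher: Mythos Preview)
Your proof is correct and follows essentially the same route as the paper: both pass through $\PP(u,v)$, use the sandwich $\frac{1}{m+1}\int(u-\varphi)\HH(\varphi)\leq\EE(u)-\EE(\varphi)\leq\int(u-\varphi)\HH(\varphi)$ for $\varphi\leq u$ (which the paper cites as~\cite[Proposition~3.3]{ACmetric} and you reprove via the cocycle identity and monotonicity of the $T_j$), and combine with Proposition~\ref{basic}(6),(10) and the quasi-triangle inequality. Your packaging via the intermediate quantity $\Sigma$ is tidy and, as you note, gives the slightly sharper constant $2^{m+1}(m+1)C^{m+1}$ because you use the optimal convexity bound $(a+b)^{m+1}\leq 2^m(a^{m+1}+b^{m+1})$.
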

\begin{proof}
Let $u,v\in \mathcal E_{1,m}(\Omega)$. Note that $\{\PP(u,v)=u\}\subset \{u\leq v\}$ and $\{\PP(u,v)=v\}\subset \{v\leq u\}$. Then we obtain by Theorem~\ref{mp} and~\cite[Proposition 3.3]{ACmetric}
\begin{multline*}
\dd(u,v)=\EE(u)-\EE(\PP(u,v))+\EE(v)-\EE(\PP(u,v))\leq \\
\int_{\Omega}(u-\PP(u,v))\HH(\PP(u,v))+\int_{\Omega}(v-\PP(u,v))\HH(\PP(u,v))=\\
\int_{\{\PP(u,v)<u\}}(u-\PP(u,v))(\chi_{\{\PP(u,v)=v\}}\HH(v)+\chi_{\{\PP(u,v)=u\}}\HH(u))+\\
\int_{\{\PP(u,v)<v\}}(v-\PP(u,v))(\chi_{\{\PP(u,v)=v\}}\HH(v)+\chi_{\{\PP(u,v)=u\}}\HH(u))\leq\\
\int_{\{\PP(u,v)<u\}\cap \{\PP(u,v)=v\}}(u-\PP(u,v))\HH(v)+\\
\int_{\{\PP(u,v)<v\}\cap \{\PP(u,v)=u\}}(v-\PP(u,v))\HH(u)\leq\\
\int_{\{v<u\}}(u-v)\HH(v)+\int_{\{u<v\}}(v-u)\HH(u)\leq \\
\int_{\{v<u\}}(u-v)(\HH(v)+\HH(u))+\int_{\{u<v\}}(v-u)(\HH(u)+\HH(v))=\operatorname{J}_1(u,v)^{m+1}.
\end{multline*}
Thanks to Proposition~\ref{basic}, the quasi-triangle inequality, and~\cite[Proposition 3.3]{ACmetric} we get
\begin{multline*}
\dd(u,v)=\EE(u)-\EE(\PP(u,v))+\EE(v)-\EE(\PP(u,v))\geq \\
\frac {1}{m+1}\int_{\Omega}(u-\PP(u,v))\HH(\PP(u,v))+\frac {1}{m+1}\int_{\Omega}(v-\PP(u,v))\HH(\PP(u,v))\geq\\
\frac {1}{2(m+1)}\operatorname{J}_1(u,\PP(u,v))^{m+1}+\frac {1}{2(m+1)}\operatorname{J}_1(v,\PP(u,v))^{m+1}\geq \\
\frac {1}{2^{m+2}(m+1)}\left(\operatorname{J}_1(u,\PP(u,v))+\operatorname{J}_1(v,\PP(u,v))\right)^{m+1}\geq \frac {1}{2^{m+2}C^{m+1}(m+1)}\operatorname{J}_1(u,v)^{m+1}.
\end{multline*}
\end{proof}

Now to the second part of this section. Let us start here with a brief introduction. We start by defining
\[
\delta\mathcal E_{p,m}(\Omega)=\mathcal E_{p,m}(\Omega)-\mathcal E_{p,m}(\Omega),
\]
since $\mathcal E_{p,m}(\Omega)$ is only a convex cone. Then for any $u\in \delta\mathcal E_{p,m}(\Omega)$ we define
\[
\|u\|_p=\inf_{u_1-u_2=u \atop u_1,u_2\in \mathcal{E}_{p,m}(\Omega)}\left(\int_{\Omega} (-(u_1+u_2))^p\operatorname{H}_m(u_1+u_2)
\right)^{\frac {1}{m+p}}\, .
\]
It was proved in~\cite{thien} that $(\delta\mathcal{E}_{p,m}(\Omega), \|\cdot\|_p)$ is a quasi-Banach space, i.e. it is complete quasi-normed vector space (for the case $m=n$ see~\cite{mod}). Recall that $\|\cdot\|_p$ is a quasinorm if the following holds:

\medskip

\noindent\begin{enumerate}\itemsep2mm
\item $\|u\|_p=0$ if, and only if, $u=0$;

\item $\|tu\|_p=|t| \|u\|_p$;

\item it satisfies quasi-triangle inequality
\[
\|u+v\|_p\leq C(\|u\|_p+\|v\|_p),
\]
for some constant $C\geq 1$.
\end{enumerate}
\noindent Furthermore, if $u\in \mathcal E_{p,m}(\Omega)$, then $\|u\|_p=e_{p,m}(u)^{\frac {1}{p+m}}$.

\begin{example}
There is no constant $C>0$ such that $\Jp(u,v)\leq C\|u-v\|_p$. To see this let $w\in \mathcal E_{p,m}(\Omega)$, and $t>0$. Then
\begin{multline*}
\Jp(w+tw,tw)^{p+m}=\int_{\Omega}|w+tw-tw|^p(\HH((t+1)w)+\HH(tw))=\\
((t+1)^m+t^m)e_{p,m}(w)\to \infty,
\end{multline*}
as $t\to \infty$, but
\[
\|w+tw-tw\|_p=\|w\|_p=e_{p,m}(w)^{\frac {1}{p+m}}<\infty.
\]
\hfill{$\Box$}
\end{example}

\begin{example}
There is no constant $C>0$ such that $\|u-v\|_p\leq C\Jp(u,v)$. To see this define the following  plurisubharmonic functions in unit ball $\mathbb B$ in $\mathbb C^n$:
\[
u_j(z)=j\max (\log |z|, a_j), \ \ v_j(z)=j\max (\log |z|, c),
\]
where $a_j\searrow c$, as $j\to \infty$, where $a_j,c<0$ and the sequence $a_j=c+j^{-\frac {n+p+1}{p}}$. By~\cite{mod} we have
\begin{multline*}
\|u_j-v_j\|^{n+p}=\|u_j+v_j\|^{n+p}=e_{p,m}(u_j+v_j)\geq \int_{\Omega}(-u_j-v_j)^p(dd^cv_j)^n=\\
(2\pi)^nj^{n+p}(-a_j-c)^p\to \infty,
\end{multline*}
as $j\to \infty$. Since, $u_j\geq v_j$ it follows
\[
\Jp(u_j,v_j)^{p+n}=\int_{\mathbb B}|u_j-v_j|^p((dd^cu_j)^n+(dd^cv_j)^{n})=
(2\pi)^nj^{n+p}(a_j-c)^p,
\]
then it holds that $\Jp(u_j,v_j)\to 0$, as $j\to \infty$.\hfill{$\Box$}
\end{example}

\section{Stability of the complex Hessian operator}\label{sec_stability}

Let
\[
\begin{aligned}
\MM=\big\{&\mu \; :\;  \mu \text{ is a non-negative Radon measure on $\Omega$ such that }\\
 & \HH(u)=\mu \text{ for some } u\in \mathcal E_{p,m}\big \},
\end{aligned}
\]
and recall the following characterization of $\MM$:

\medskip

\begin{enumerate}\itemsep2mm
\item $\mu\in \MM$;
\item there exists a constant $A\geq 0$ such that
\[
\int_{\Omega}(-u)^p\,d\mu\leq A\, e_{p,m}(u)^{\frac{p}{p+m}}\quad \text{ for all } u\in\mathcal E_{p,m}(\Omega)\, ;
\]
\item $\mathcal E_{p,m}(\Omega)\subset L^p(\mu)$;

\item there exists unique function $U(\mu)\in \mathcal E_{p,m}(\Omega)$ the solution to the Dirichlet problem for the complex Hessian equation $\HH(U(\mu))=\mu$,

\end{enumerate}
(see~\cite{cegrell_pc,L3} for details).

In this section we shall prove some new stability results for the complex Hessian operator. In previous results concerning stability of the complex Monge-Amp\`ere equation or the complex Hessian equation see e.g.~\cite{CK,C,thien2} In those papers the authors proved that under some assumption if $\mu_j$ converges to $\mu$, then the corresponding solutions $U(\mu_j)$ converges to $U(\mu)$ in capacity. Our goal is to prove that the convergence is stronger in the sense that $\Jp(U(\mu_j),U(\mu))\to 0$.

\begin{lemma}\label{lem2} Let $n\geq 2$, $1\leq m\leq n$, and assume that $\Omega\subset \mathbb C^n$ is a $m$-hyperconvex domain. Let $\mu\in \MM$. Then for any $\nu\leq \mu$ it holds:
\[
e_{p,m}(U(\nu))\leq D(p,m)^{\frac {m+p}{p}}e_{p,m}(U(\mu)).
\]
\end{lemma}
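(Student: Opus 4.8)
The plan is to reduce the whole statement to a single application of the H\"older-type inequality in Theorem~\ref{thm_holder}, after first using the comparison principle to rewrite the energy in a convenient form. Before anything else I would check that $U(\nu)$ is actually well defined. Since $\nu\leq\mu$ and $\mu\in\MM$, characterization~(2) gives, for every $u\in\mathcal E_{p,m}(\Omega)$,
\[
\int_{\Omega}(-u)^p\,d\nu\leq \int_{\Omega}(-u)^p\,d\mu\leq A\,e_{p,m}(u)^{\frac{p}{p+m}},
\]
so $\nu$ satisfies~(2) with the same constant $A$; hence $\nu\in\MM$ and the unique solution $U(\nu)\in\mathcal E_{p,m}(\Omega)$ of $\HH(U(\nu))=\nu$ exists. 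Next, from $\HH(U(\nu))=\nu\leq\mu=\HH(U(\mu))$ and the comparison principle Theorem~\ref{thm_prel}(2) I obtain $U(\mu)\leq U(\nu)\leq 0$, and therefore $0\leq -U(\nu)\leq -U(\mu)$ pointwise.

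The key step is a weight swap. Writing the energy as an integral against $\nu$ and then enlarging the weight,
\begin{multline*}
e_{p,m}(U(\nu))=\int_{\Omega}(-U(\nu))^p\,\HH(U(\nu))=\int_{\Omega}(-U(\nu))^p\,d\nu\leq \\
\int_{\Omega}(-U(\mu))^p\,d\nu=\int_{\Omega}(-U(\mu))^p\,\HH(U(\nu)).
\end{multline*}
Now I would apply Theorem~\ref{thm_holder} with $u_0=U(\mu)$ and $u_1=\cdots=u_m=U(\nu)$, which bounds the last integral by $D(m,p)\,e_{p,m}(U(\mu))^{\frac{p}{m+p}}\,e_{p,m}(U(\nu))^{\frac{m}{m+p}}$. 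Dividing both sides by $e_{p,m}(U(\nu))^{m/(m+p)}$ (the claim being trivial when this factor vanishes) leaves
\[
e_{p,m}(U(\nu))^{\frac{p}{m+p}}\leq D(m,p)\,e_{p,m}(U(\mu))^{\frac{p}{m+p}},
\]
and raising to the power $(m+p)/p$ yields exactly $e_{p,m}(U(\nu))\leq D(m,p)^{\frac{m+p}{p}}e_{p,m}(U(\mu))$, as stated.

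I do not expect a serious obstacle here; the only points needing care are getting the direction of the comparison principle right, so that the weight may be \emph{enlarged} rather than shrunk, and choosing the grouping $u_0=U(\mu)$ in Theorem~\ref{thm_holder} so that the self-referential factor $e_{p,m}(U(\nu))$ carries the exponent $m/(m+p)$ and can be absorbed to the left. I would also note in passing that Theorem~\ref{thm_holder} can be bypassed entirely: since $\nu\leq\mu$, the chain $\int_{\Omega}(-U(\mu))^p\,d\nu\leq\int_{\Omega}(-U(\mu))^p\,d\mu=e_{p,m}(U(\mu))$ already gives the sharper estimate $e_{p,m}(U(\nu))\leq e_{p,m}(U(\mu))$, which implies the stated inequality because $D(m,p)\geq 1$.
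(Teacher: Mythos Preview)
Your argument is correct, and its skeleton---rewrite the energy as an integral against~$\nu$, enlarge one factor, apply Theorem~\ref{thm_holder}, absorb the self-referential power---is essentially the same as the paper's. The one substantive difference is the choice of which factor to enlarge: the paper keeps the weight $(-U(\nu))^p$ and enlarges the \emph{measure} $\nu\leq\mu$, obtaining
\[
e_{p,m}(U(\nu))\leq\int_{\Omega}(-U(\nu))^p\HH(U(\mu))\leq D(m,p)\,e_{p,m}(U(\nu))^{\frac{p}{p+m}}e_{p,m}(U(\mu))^{\frac{m}{m+p}},
\]
whereas you first invoke the comparison principle to get $-U(\nu)\leq -U(\mu)$ and then enlarge the \emph{weight}, landing on the dual H\"older application with $u_0=U(\mu)$. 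Your route has the minor advantage that the exponent you obtain, $(m+p)/p$, matches the statement exactly; the paper's swap yields $(m+p)/m$ instead. On the other hand the paper avoids the extra appeal to Theorem~\ref{thm_prel}(2).

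Your closing remark is worth highlighting: once you have both $-U(\nu)\leq -U(\mu)$ and $\nu\leq\mu$, the two-step estimate
\[
e_{p,m}(U(\nu))=\int_{\Omega}(-U(\nu))^p\,d\nu\leq\int_{\Omega}(-U(\mu))^p\,d\nu\leq\int_{\Omega}(-U(\mu))^p\,d\mu=e_{p,m}(U(\mu))
\]
delivers the sharp constant~$1$ without Theorem~\ref{thm_holder} at all. This is a genuine simplification over both the paper's proof and your main argument.
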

\begin{proof}
The desired inequality follows from the following estimation
\begin{multline*}
e_{p,m}(U(\nu))=\int_{\Omega}(-U(\nu))^p\HH(U(\nu))=\int_{\Omega}(-U(\nu))^pd\nu\leq \int_{\Omega}(-U(\nu))^pd\mu=\\
\int_{\Omega}(-U(\nu))^p\HH(U(\mu))\leq D(m,p)e_{p,m}(U(\nu))^{\frac {p}{p+m}}e_{p,m}(U(\mu))^{\frac {m}{m+p}}.
\end{multline*}
\end{proof}

\begin{definition}\label{fund seq}
A fundamental sequence $\Omega_{j}$, $j\in \mathbb N$, is an increasing sequence of $m$-hyperconvex subsets of
$\Omega\subset\C^n$, $n\geq 2$, such that for every $j\in\N$ we have that, $\Omega_{j}\Subset\Omega_{j+1}$, and
$\bigcup_{j=1}^{\infty}\Omega_{j}=\Omega$.
\end{definition}

The main result in this section is the following stability theorem.

\begin{theorem}\label{thm_stability} Let $n\geq 2$, $1\leq m\leq n$, and assume that $\Omega\subset \mathbb C^n$ is a $m$-hyperconvex domain and let $\mu\in \MM$. If
$0\leq f, f_j\leq 1$ are measurable functions such that $f_j\to f$ in $L^1_{loc}(\mu)$, as $j\to +\infty$, then $\Jp(U(f_j\mu), U(f\mu))\to 0$.
\end{theorem}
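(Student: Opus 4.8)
The plan is to reduce the general convergence statement to a monotone approximation argument, exploiting the fact that $\Jp$ is continuous along decreasing sequences (Proposition~\ref{monotone}) together with the quasi-triangle inequality (Lemma~\ref{qm}). Since $0\le f,f_j\le 1$ and $\mu\in\MM$, each of $f\mu$ and $f_j\mu$ is dominated by $\mu$, so by Lemma~\ref{lem2} the energies $e_{p,m}(U(f_j\mu))$ and $e_{p,m}(U(f\mu))$ are uniformly bounded by $D(p,m)^{(m+p)/p}e_{p,m}(U(\mu))$. This uniform energy bound is the backbone: it keeps the whole argument inside a fixed ``energy ball'' and lets me apply the H\"older-type estimate of Theorem~\ref{thm_holder} freely.

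First I would handle the case of monotone data. Suppose $g_j\nearrow g$ (or $g_j\searrow g$) with $0\le g_j,g\le 1$. Then by the comparison principle for the Dirichlet problem the solutions $U(g_j\mu)$ are monotone in $j$ and converge to $U(g\mu)$; for decreasing data the solutions increase and for increasing data they decrease. Here I would invoke the uniform energy bound to guarantee the limit stays in $\mathcal E_{p,m}(\Omega)$, and then appeal to the weak convergence of the Hessian measures and to the stability results already available (via Proposition~\ref{cap} and Lemma~\ref{lem1}) to identify the monotone limit of the solutions as $U(g\mu)$. The key estimate here is the bound
\[
\Jp(U(g_j\mu),U(g\mu))^{p+m}\le 2\int_{\Omega}\bigl(U(g\mu)-U(g_j\mu)\bigr)^p\,\HH\bigl(U(g\mu)\bigr)
\]
from Proposition~\ref{basic}(6) (for the decreasing-solution case), which tends to zero by monotone convergence; the increasing-solution case is handled symmetrically using the dominating measure.

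For the general case I would sandwich $f_j$ between monotone envelopes. Since $f_j\to f$ in $L^1_{loc}(\mu)$, after passing to a subsequence I may assume $f_j\to f$ $\mu$-a.e., and then set $h_k=\sup_{j\ge k}f_j$ and $\ell_k=\inf_{j\ge k}f_j$, so that $\ell_k\nearrow f$ and $h_k\searrow f$ $\mu$-a.e.\ with $\ell_k\le f_j\le h_k$ for $j\ge k$. By the monotone case, both $\Jp(U(h_k\mu),U(f\mu))$ and $\Jp(U(\ell_k\mu),U(f\mu))$ tend to $0$. The comparison principle gives $U(\ell_k\mu)\le U(f_j\mu)\le U(h_k\mu)$ for $j\ge k$, and I would then control $\Jp(U(f_j\mu),U(f\mu))$ by combining the quasi-triangle inequality with a monotonicity estimate such as Proposition~\ref{basic}(7) or (8), which bounds $\Jp$ of two nested functions by $\Jp$ of an enclosing pair. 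Because a subsequential argument only yields convergence along a subsequence, I would wrap the whole thing in the standard ``every subsequence has a further convergent subsequence with the same limit'' device to recover full convergence.

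The main obstacle I anticipate is that $\Jp$ is \emph{not} known to be continuous along increasing sequences when $p\neq 1$ (as the authors stress just before Proposition~\ref{monotone}), so the increasing-data half of the monotone case cannot be treated by a naive passage to the limit. The workaround will have to mirror the completeness proof (Theorem~\ref{comp}): rather than claiming continuity outright, I would extract the one-sided estimate
\[
\liminf_{j\to\infty}\Jp\bigl(U(g_j\mu),U(g\mu)\bigr)^{p+m}\ge \Jp\bigl(U(g\mu),U(g\mu)\bigr)^{p+m}=0
\]
is vacuous and instead use Proposition~\ref{basic}(6) in the form that bounds $\Jp$ from \emph{above} by the integral against the measure of the smaller solution, pairing this with Proposition~\ref{weakconvergence} exactly as in \eqref{7}. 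Making this upper bound vanish in the increasing case—where the controlling measure itself varies with $j$—is the delicate point, and it is where the uniform energy bound from Lemma~\ref{lem2} together with the weak convergence $\HH(U(g_j\mu))\to\HH(U(g\mu))$ must be deployed most carefully.
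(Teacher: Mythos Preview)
Your envelope-sandwich strategy is genuinely different from the paper's proof and can be made to work, but as written it has a real gap at exactly the place you flag: the increasing-solution case. You correctly reduce to showing $\Jp(U(h_k\mu),U(f\mu))\to 0$ when $h_k\searrow f$ $\mu$-a.e.\ (so $U(h_k\mu)\nearrow U(f\mu)$), and you correctly invoke Proposition~\ref{basic}(6) to bound
\[
\Jp(U(h_k\mu),U(f\mu))^{p+m}\le 2\int_{\Omega}\bigl(U(f\mu)-U(h_k\mu)\bigr)^p h_k\,d\mu .
\]
But then you propose to use Proposition~\ref{weakconvergence} ``as in \eqref{7}''. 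That proposition is a \emph{liminf} inequality, and in \eqref{7} it is used to push a \emph{lower} bound across a weak limit; it cannot make an upper bound vanish. Nor does weak convergence $\HH(U(h_k\mu))\to\HH(U(f\mu))$ help, because the integrand is not a fixed continuous function. The actual fix is elementary and does not use weak convergence at all: since $0\le h_k\le 1$ the right-hand side is dominated by $2\int_\Omega (U(f\mu)-U(h_k\mu))^p\,d\mu$, the integrand decreases pointwise to $0$ and is bounded by $(-U(h_1\mu))^p$, which lies in $L^1(\mu)$ by the characterization $\mathcal E_{p,m}(\Omega)\subset L^p(\mu)$ of $\MM$. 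Dominated convergence then gives the result. A second, smaller issue: your appeal to Proposition~\ref{cap} and Lemma~\ref{lem1} to identify the monotone limit of the solutions is circular (those results assume $\Jp$-convergence, which is what you are proving). What you actually need is continuity of $\HH$ along increasing sequences with bounded energy, which is standard (see~\cite{L3}); together with $h_k\mu\searrow f\mu$ and uniqueness this yields $(\lim_k U(h_k\mu))^*=U(f\mu)$.

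For comparison, the paper's proof avoids the increasing-sequence issue altogether. Instead of taking monotone envelopes of the data, it uses the Cegrell--Lebesgue decomposition $\mu=g\,\HH(\varphi)$ with $\varphi\in\mathcal E_{0,m}$ bounded, then truncates and localizes to produce auxiliary solutions $u_{j,k},w_k$ that are \emph{uniformly bounded} (by $k^{1/m}$). On these bounded functions the $L^1_{loc}(\mu)$ hypothesis can be exploited directly via a Xing/B\l ocki-type inequality, and the remaining error terms are controlled by $e_{p,m}(\psi_k)\to 0$, $e_{p,m}(v_k)\to 0$ through Theorem~\ref{thm_holder} and dominated convergence. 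The advantage of the paper's route is that every limit it takes is either a decreasing-sequence limit (covered by Proposition~\ref{monotone}) or an explicit energy estimate, so the $p\ne 1$ obstruction never arises; the advantage of your route, once patched as above, is that it bypasses the decomposition and truncation machinery entirely.
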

\begin{proof}
Fix $\mu \in \MM$.  From the Cegrell-Lebesgue decomposition theorem (see~\cite{cegrell_pc, L3}) it follows that there exist $\varphi\in \mathcal E_{0,m}(\Omega)$, $\|\varphi\|_{\infty}\leq 1$ and $g\geq 0$ such that $g\HH(\varphi)=\mu$. Fix a fundamental sequence $\Omega_j$. For $j,k\in \mathbb N$, let us define the following functions:

\begin{alignat*}{2}
&w=U(\mu)\in \mathcal E_{p,m}(\Omega): \ \ &&\HH(w)=\mu; \\
&u=U(f\mu)\in \mathcal E_{p,m}(\Omega): \ \ &&\HH(u)=f\mu;\\
&u_j=U(f_j\mu)\in \mathcal E_{p,m}(\Omega): \ \ &&\HH(u_j)=f_j\mu; \\
&u_{j,k}\in \mathcal E_{p,m}(\Omega): \ \ &&\HH(u_{j,k})=\chi_{\Omega_k}f_j\min(g,k)\HH(\varphi); \\
&v_{j,k}\in \mathcal E_{p,m}(\Omega): \ \ &&\HH(v_{j,k})=\chi_{\Omega_k}f_j\mu; \\
&w_{k}\in \mathcal E_{p,m}(\Omega): \ \ &&\HH(w_{k})=\chi_{\Omega_k}f\min(g,k)\HH(\varphi); \\
&v_{k}\in \mathcal E_{p,m}(\Omega): \ \ &&\HH(v_{k})=(1-\chi_{\Omega_k})\mu; \\
&\psi_{k}\in \mathcal E_{p,m}(\Omega): \ \ &&\HH(\psi_{k})=(g-\min(g,k))\HH(\varphi). \\
\end{alignat*}

Since,
\begin{equation}\label{11}
\Jp(u,u_j)\leq C^3(\Jp(u,w_k)+\Jp(w_k,u_{j,k})+\Jp(u_{j,k},v_{j,k})+\Jp(v_{j,k},u_j)),
\end{equation}
it is enough to prove that each term in (\ref{11}) tend to zero to complete the proof.

\emph{Claim 1}. $\Jp(u,w_k)\to 0$, as $k\to \infty$. This follows from that $w_k$ is a decreasing sequence tending to $u$, as $k\to \infty$.

\emph{Claim 2}. For fixed $k$ we have that $\Jp(w_k,u_{j,k})\to 0$, as $j\to \infty$. To prove this first note that the functions $w_k$ and $u_{j,k}$ are uniformly bounded by $k^{\frac 1m}$, and therefore it follows
\begin{multline*}
\Jp(w_k,u_{j,k})^{p+m}=\int_{\Omega}|w_k-u_{j,k}|^p(\HH(w_k)+\HH(u_{j,k}))=\\
\int_{\{w_k<u_{j,k}\}}(u_{j,k}-w_k)^p\chi_{\Omega_k}(f_j+f)\min(g,k)\HH(\varphi)+\\
\int_{\{w_k>u_{j,k}\}}(w_k-u_{j,k})^p\chi_{\Omega_k}(f_j+f)\min(g,k)\HH(\varphi)\leq \\
2k\int_{\{w_k<u_{j,k}\}}(u_{j,k}-w_k)^p\HH(\varphi)+2k\int_{\{w_k>u_{j,k}\}}(w_k-u_{j,k})^p\HH(\varphi).
\end{multline*}

Now assume that $p\leq m$. Then we can continue our estimate by using the H\"older inequality. By~\cite{thien2} we get
\begin{multline*}
\int_{\{w_k<u_{j,k}\}}(u_{j,k}-w_k)^p\HH(\varphi)\leq\\ \left(\int_{\{w_k<u_{j,k}\}}(u_{j,k}-w_k)^m\HH(\varphi)\right)^{\frac {p}{m}}(\HH(\varphi)(\Omega))^{\frac {m-p}{m}}\leq\\
(\HH(\varphi)(\Omega))^{\frac {m-p}{m}}\left(m!\int_{\{w_k<u_{j,k}\}}(-\varphi)(\HH(w_k)-\HH(u_{j,k}))\right)^{\frac {p}{m}}\leq \\
(\HH(\varphi)(\Omega))^{\frac {m-p}{m}}\left(m!\int_{\Omega_k}|f_j-f|d\mu\right)^{\frac {p}{m}}\to 0, \text{ as } j\to \infty.
\end{multline*}

In a similar way, one can prove
\[
\int_{\{w_k>u_{j,k}\}}(w_k-u_{j,k})^p\HH(\varphi)\leq (\HH(\varphi)(\Omega))^{\frac {m-p}{m}}\left(m!\int_{\Omega_k}|f_j-f|d\mu\right)^{\frac {p}{m}}\to 0,
\]
as $j\to \infty$. If $p>m$, then one can repeat the above argument using the fact
\[
|u_{j,k}-w_k|^p\leq (2k^{\frac 1m})^{p-m}|u_{j,k}-w_k|^m.
\]

\emph{Claim 3}. $\Jp(v_{j,k},u_{j,k})\to 0$, as $k\to \infty$, and the convergence is uniform on $j$. Since
\begin{multline*}
\HH(v_{j,k})=\chi_{\Omega_k}f_jg\HH(\varphi)\leq \\
\chi_{\Omega_k}f_j\min(g,k)\HH(\varphi)+(g-\min(g,k))\HH(\varphi)=\\
\HH(u_{j,k})+\HH(\psi_k)\leq \HH(u_{j,k}+\psi_k),
\end{multline*}
we have that $u_{j,k}+\psi_k\leq v_{j,k}$. Furthermore, $u_{j,k}\geq v_{j,k}$, and $\psi_k$ is a increasing sequence such that
\begin{multline*}
e_{p,m}(\psi_k)=\int_{\Omega}(-\psi_k)^p\HH(\psi_k)=\int_{\Omega}(-\psi_k)^p(g-\min(g,k))\HH(\varphi)\leq \\
\int_{\Omega}(-\psi_1)^p(g-\min(g,k))\HH(\varphi)\to 0, \ k\to \infty,
\end{multline*}
by dominated convergence theorem. Finally,
\begin{multline*}
\Jp(v_{j,k},u_{j,k})^{p+m}=\int_{\Omega}|v_{j,k}-u_{j,k}|^p(\HH(v_{j,k})+\HH(u_{j,k}))\leq \\
2\int_{\Omega}(u_{j,k}-v_{j,k})^p\HH(w)\leq 2\int_{\Omega}(-\psi_k)^p\HH(w)\leq \\
2D(m,p)e_{p,m}(\psi_k)^{\frac {p}{m+p}}e_{p,m}(w)^{\frac {m}{m+p}}\to 0,
\end{multline*}
as $k\to \infty$. The convergence is as well uniform in $j$.

\emph{Claim 4}. $\Jp(v_{j,k},u_{j})\to 0$, as $k\to \infty$, and the convergence is uniform on $j$. The proof of Claim 4 follows that of Claim 3.
Observe that
\begin{multline*}
\HH(u_{j})=f_j\mu\leq (1-\chi_{\Omega_k})\mu+f_j\chi_{\Omega_k}\mu=\\
\HH(v_k)+\HH(v_{j,k})\leq \HH(v_k+v_{j,k}),
\end{multline*}
which implies that $v_k+v_{j,k}\leq u_{j}$. Furthermore, $u_{j}\leq v_{j,k}$, and $v_k$ is increasing sequence such that
\begin{multline*}
e_{p,m}(v_k)=\int_{\Omega}(-v_k)^p\HH(v_k)=\int_{\Omega}(-v_k)^p(1-\chi_{\Omega_k})\mu\leq \\
\int_{\Omega}(-v_1)^p(1-\chi_{\Omega_k})\mu\to 0, \ k\to \infty,
\end{multline*}
by dominated convergence theorem. Finally,
\begin{multline*}
\Jp(v_{j,k},u_{j})^{p+m}=\int_{\Omega}|v_{j,k}-u_{j}|^p(\HH(v_{j,k})+\HH(u_{j}))\leq \\
2\int_{\Omega}(v_{j,k}-u_{j})^p\HH(w)\leq 2\int_{\Omega}(-v_k)^p\HH(w)\leq \\
2D(m,p)e_{p,m}(v_k)^{\frac {p}{m+p}}e_{p,m}(w)^{\frac {m}{m+p}}\to 0, \ k\to \infty,
\end{multline*}
the convergence is uniform in $j$.

\end{proof}

\section{The compact K\"ahler manifold case}\label{sec_kah}

Let $n\geq 2$, $p>0$, and let $1\leq m\leq n$. Assume that $(X,\omega)$ is a connected and compact K\"ahler manifold of complex dimension $n$, where $\omega$ is a K\"ahler form on $X$ such that $\int_{X}\omega^n=1$. In a similar way as in Section~\ref{sec_quasimetric}, we define a quasimetric, $I_p$, for $(\omega,m)$-subharmonic functions (for the case $m=n$, see~\cite{GLZ}). For further information concerning $(\omega,m)$-subharmonic function ($\mathcal {SH}_m(X,\omega)$) on compact K\"ahler manifold see e.g.~\cite{ACchar,DL,GLZ,LN}.

\medskip

For any $u\in \mathcal {SH}_m(X,\omega)$, let
\[
\omega_u=dd^cu+\omega.
\]
The complex Hessian operator is defined on  $(\omega,m)$-subharmonic functions through the following construction:  First assume that $u\in \mathcal {SH}_m(X,\omega)\cap L^{\infty}(X)$, then
\[
\operatorname{H}_m(u):=\omega_u^m\wedge \omega^{n-m},
\]
which is a non-negative (regular) Borel measure defined on $X$. For an arbitrary, not necessarily bounded, $(\omega,m)$-subharmonic function $u$ let $u_j=\max(u,-j)$ be the canonical approximation of $u$. Then define
\[
\operatorname{H}_m(u):=\lim_{j\to \infty}\chi_{\{u>-j\}}\operatorname{H}_m(u_j).
\]
We define the class of \emph{$(\omega,m)$-subharmonic functions with bounded $(p,m)$-energy} as
\[
\mathcal E_{p,m}(X,\omega):=\left\{u\in \mathcal E_m(X,\omega): u\leq 0, \int_X(-u)^p\operatorname{H}_m(u)<\infty\right\},
\]
where
\[
\mathcal E_m(X,\omega)=\left\{u\in \mathcal {SH}_m(X,\omega): \int_X\operatorname{H}_m(u)=1 \right\}.
\]
For the counterparts of Theorem~\ref{thm_holder}, Theorem~\ref{thm_prel}, and the approximation theorem for $(\omega,m)$-subharmonic functions defined on compact K\"ahler manifolds we refer to \cite{ACchar,DL,GLZ,LN,LN2}.

The following definition is the counterpart of $\Jp$ in Definition~\ref{def_Jp}.

\begin{definition}
Let $n\geq 2$, $p>0$, and let $1\leq m\leq n$. For $u,v\in \mathcal E_{p,m}(X,\omega)$ and we define
\[
\Ip(u,v)=\left(\int_{X}|u-v|^p(\HH(u)+\HH(v))\right)^{\frac 1{p+m}}.
\]
\end{definition}
\begin{remark}
Note that it follows from~\cite[Lemma 3.5]{ACchar} that the functional $\Ip$ is well-defined.
\end{remark}

The aim of this section is to prove that $(\mathcal E_{p,m}(X,\omega),\Ip)$ is a complete quasimetric space.

\begin{theorem}\label{thmKah_quasimetric}
The pair $(\mathcal E_{p,m}(X,\omega),\Ip)$ is a quasimetric space.
\end{theorem}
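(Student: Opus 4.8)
The plan is to mirror, step by step, the verification carried out in Section~\ref{sec_quasimetric} for $(\mathcal E_{p,m}(\Omega),\Jp)$, now using the compact Kähler counterparts of Theorem~\ref{thm_holder} and Theorem~\ref{thm_prel} (the Hölder-type energy estimate, the comparison principle, and the behaviour of $\HH$ under $\max$ and convex combinations) that are available from~\cite{ACchar,DL,GLZ,LN,LN2}. Finiteness and well-definedness of $\Ip$ are already recorded in the remark following its definition (via~\cite[Lemma~3.5]{ACchar}, and also obtainable from the Kähler analogue of Theorem~\ref{thm_holder}), while symmetry $\Ip(u,v)=\Ip(v,u)$ is immediate, since $|u-v|^p=|v-u|^p$ and the measure $\HH(u)+\HH(v)$ is symmetric in $u,v$. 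It therefore remains only to establish definiteness and the quasi-triangle inequality.

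For definiteness, suppose $\Ip(u,v)=0$. Then
\[
\int_X|u-v|^p\,\HH(u)=\int_X|u-v|^p\,\HH(v)=0,
\]
so $\HH(u)(\{u<v\})=0$ and $\HH(v)(\{v<u\})=0$. Applying the compact-manifold comparison principle in the form of the counterpart of Theorem~\ref{thm_prel}(3) to each of these yields $u\geq v$ and $v\geq u$, hence $u=v$; the reverse implication is trivial.

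For the quasi-triangle inequality I would reproduce the argument of Lemma~\ref{qm} in the compact setting. Expressing $\Ip(u,v)^{p+m}$ through the layer-cake formula
\[
\int_X|u-v|^p(\HH(u)+\HH(v))=p\int_0^\infty s^{p-1}(\HH(u)+\HH(v))(\{|u-v|>s\})\,ds,
\]
which is legitimate because $\HH(u)$ and $\HH(v)$ are finite (total mass $1$) measures on $X$, I would then estimate $\HH(u)(\{u<v-2s\})$ using the inclusion
\[
\{u<v-2s\}\subset\{u<w-s\}\cup\Big\{w<\tfrac{u+2v}{3}-\tfrac s3\Big\}
\]
together with the comparison principle on $X$. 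The crucial linearity input is that $\omega_{(u+2v)/3}=\tfrac13\omega_u+\tfrac23\omega_v$, so that $\omega_u\le\omega_u+2\omega_v=3\,\omega_{(u+2v)/3}$ as positive currents and hence $\HH(u)\le 3^m\HH\big(\tfrac{u+2v}{3}\big)$. Combining this with the elementary bound for $\big|w-\tfrac{u+2v}{3}\big|^p$ and the symmetric estimate for $\HH(v)(\{v<u-2s\})$ gives the quasi-triangle inequality with the same constant $C=\big(2^{2p+1}(2^p+1)3^m\big)^{1/(p+m)}$ as in Lemma~\ref{qm}.

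The main obstacle is to confirm that the analytic tools underpinning this argument genuinely hold on a compact Kähler manifold, where there is no boundary and $(\omega,m)$-subharmonic functions do not vanish at infinity: specifically the comparison principle $\int_{\{u<v\}}\HH(v)\le\int_{\{u<v\}}\HH(u)$, its consequence $\HH(u)(\{u<v\})=0\Rightarrow u\ge v$, and the mass monotonicity $\HH(u)(\{v+2s<u\})\le\HH(v)(\{v+2s<u\})$ (obtained by applying the comparison principle to $u$ and $v+2s$, the latter again being $(\omega,m)$-subharmonic). These are precisely the statements recorded in~\cite{ACchar,GLZ,LN}. Once they are in hand, the role played in $\C^n$ by $\be$ and $dd^cu$ is taken over, without any structural change, by $\omega^{n-m}$ and $\omega_u=dd^cu+\omega$; the linearity of the Kähler term under convex combinations guarantees that the decisive step of Lemma~\ref{qm} survives, and the proof then transfers essentially line by line.
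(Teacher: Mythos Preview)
Your proposal is correct and follows essentially the same route as the paper's proof. The paper argues definiteness by invoking the domination principle on compact K\"ahler manifolds (\cite[Theorem~3.15]{LN}) to conclude $u\geq v$ from $\HH(u)(\{u<v\})=0$, and then states that the quasi-triangle inequality ``follows in the same way as in Lemma~\ref{qm}''; your write-up simply makes explicit the one point that deserves checking in the K\"ahler setting, namely that $\omega_{(u+2v)/3}=\tfrac13\omega_u+\tfrac23\omega_v$, so that $3^m\HH\big(\tfrac{u+2v}{3}\big)=(\omega_u+2\omega_v)^m\wedge\omega^{n-m}\geq \omega_u^m\wedge\omega^{n-m}=\HH(u)$ and the estimate in Lemma~\ref{qm} carries over verbatim.
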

\begin{proof}
First assume that $u,v\in \mathcal E_{p,m}(X,\omega)$, and $\Ip(u,v)=0$. Then $\operatorname{H}_m(u)(\{u<v\})=0$, and by~\cite[Theorem~3.15]{LN} we get $u\geq v$. In a similar way, we can obtain that $v\geq u$. Hence, $u=v$. The quasi-triangle inequality follows in the same way as in Lemma~\ref{qm}.
\end{proof}

We shall need the counterpart of Proposition~\ref{cp xing}.

\begin{proposition}\label{cp xing_m}
Let $u,v\in \mathcal E_{p,m}(X,\omega)$.
\begin{enumerate}\itemsep2mm
\item If $u\leq v$, then
\[
\int_{X}(v-u)^p\HH(v)\leq \int_{X}(v-u)^p\HH(u).
\]
\item Without any additional assumption on $u$, and $v$,  it holds
\[
\int_{\{u<v\}}(v-u)^p\HH(v)\leq \int_{\{u<v\}}(v-u)^p\HH(u).
\]
\end{enumerate}
\end{proposition}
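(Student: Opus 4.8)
The plan is to transplant the proof of Proposition~\ref{cp xing} from the $\C^n$ setting into the compact K\"ahler manifold case, using the manifold analogues of the tools already listed in this section. The two key inputs are the maximum/comparison machinery for $(\omega,m)$-subharmonic functions (the counterpart of Theorem~\ref{thm_prel}, available from~\cite{LN}) and the integration by parts formula in the form $\HH(u)-\HH(v)=\sum_{k+l=m-1}dd^c(u-v)\wedge \omega_u^k\wedge\omega_v^l\wedge\omega^{n-m}$, which holds for $(\omega,m)$-subharmonic functions by~\cite{ACchar,DL,LN}. The crucial algebraic point is that $\omega_u-\omega_v=dd^c(u-v)$, so the difference of Hessian measures telescopes exactly as in the flat case even though $\omega_u$ and $\omega_v$ individually carry the background form $\omega$.

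For part (1), assuming $u\le v$, I would first pass to the approximants: since $u,v\in\mathcal E_{p,m}(X,\omega)$ need not be bounded, one should work with the canonical cutoffs $u^{(N)}=\max(u,-N)$, $v^{(N)}=\max(v,-N)$ (still $u^{(N)}\le v^{(N)}$), establish the inequality for bounded functions where integration by parts is valid, and then pass to the limit $N\to\infty$ using monotone convergence and the convergence properties of the Hessian measures recorded in the definition of $\HH$ on this class. At the bounded level I would compute
\[
\int_{X}(v-u)^p\big(\HH(u)-\HH(v)\big)=\sum_{k+l=m-1}\int_{X}(v-u)^p\,dd^c(u-v)\wedge\omega_u^k\wedge\omega_v^l\wedge\omega^{n-m},
\]
integrate by parts to move $dd^c$ onto $(v-u)^p$, and observe that this produces
\[
p\sum_{k+l=m-1}\int_{X}(v-u)^{p-1}\,d(v-u)\wedge d^c(v-u)\wedge\omega_u^k\wedge\omega_v^l\wedge\omega^{n-m}\ge 0,
\]
since $d(v-u)\wedge d^c(v-u)\ge 0$ and the remaining wedge factors are positive. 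This gives $\int_X(v-u)^p\HH(v)\le\int_X(v-u)^p\HH(u)$.

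For part (2), I would drop to the set $\{u<v\}$ exactly as in Proposition~\ref{cp xing}, writing $\{u<v\}$-integrals against $\max(u,v)$: on $\{u<v\}$ one has $\max(u,v)=v$ and $\HH(\max(u,v))=\HH(v)$ there, while $\max(u,v)\ge u$ everywhere, so part (1) applied to the pair $u\le\max(u,v)$ yields
\[
\int_{\{u<v\}}(v-u)^p\HH(v)=\int_{X}(\max(u,v)-u)^p\HH(\max(u,v))\le\int_{X}(\max(u,v)-u)^p\HH(u)=\int_{\{u<v\}}(v-u)^p\HH(u),
\]
where the last equality uses that $\max(u,v)-u$ vanishes off $\{u<v\}$ and the manifold counterpart of Theorem~\ref{thm_prel}(4)--(5) to handle $\HH(\max(u,v))$ on the overlap. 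The main obstacle I anticipate is purely technical rather than conceptual: justifying the integration by parts and the limiting argument for \emph{unbounded} $(\omega,m)$-subharmonic functions of finite $(p,m)$-energy, where one must control the boundary-type terms that the canonical approximation generates and invoke the correct quasi-continuity and convergence theorems from~\cite{ACchar,LN} to ensure the measures $\HH(u^{(N)})$ converge appropriately; the positivity and algebraic telescoping themselves are immediate once the analytic framework is in place.
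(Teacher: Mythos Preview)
Your strategy is sound and part~(2) matches the paper exactly, but for part~(1) the paper takes a different regularization route. Instead of approximating by canonical cutoffs $\max(u,-N)$, the paper uses a scaling trick adapted to the compact setting: for $0<\epsilon<1$ one has $u\le v\le\epsilon v$ (recall $v\le 0$), and $\epsilon v$ remains $(\omega,m)$-subharmonic because $\omega_{\epsilon v}=\epsilon\,\omega_v+(1-\epsilon)\omega$ is a convex combination of forms in the positivity cone. This mirrors, but is not identical to, the $\epsilon>1$ trick in Proposition~\ref{cp xing}: on a compact manifold one cannot scale $u$ by $\epsilon>1$ without destroying positivity of $\omega_{\epsilon u}$, so the paper scales $v$ with $\epsilon<1$ instead. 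It then proves the single-step inequality $\int_X(\epsilon v-u)^p\,\omega_{\epsilon v}\wedge T\le\int_X(\epsilon v-u)^p\,\omega_u\wedge T$ under the strict separation $u<\epsilon v$, uses $\omega_{\epsilon v}\ge\epsilon\,\omega_v$, lets $\epsilon\to1^-$ via monotone convergence, and iterates this $m$ times rather than telescoping all factors at once. The $\epsilon$-trick buys a \emph{strict} gap before integrating by parts, so the potentially singular factor $(v-u)^{p-1}$ for $p<1$ never meets the contact set $\{u=v\}$---a point your direct integration by parts on the cutoffs glosses over. Conversely, your cutoff approximation makes the passage from bounded to finite-energy functions explicit, which the paper leaves implicit; either route closes, but the paper's is a bit more self-contained.
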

\begin{proof} (1) First assume that $u<v$. Then for any positive current $T$ it holds
\[
\int_X(v-u)^p\omega_u\wedge T-\int_X(v-u)^p\omega_v\wedge T=p\int_X(v-u)^{p-1}d(v-u)\wedge d^c(v-u)\wedge T\geq 0.
\]
Now, let $\epsilon<1$, then $u\leq v<\epsilon v$. We obtain
\[
\epsilon \int_X(\epsilon v-u)^p\omega_v\wedge T\leq \int_X(\epsilon v-u)^p\omega_{\epsilon v}\wedge T\leq \int_X(\epsilon v-u)^p\omega_u\wedge T,
\]
so by the monotone convergence theorem, and letting $\epsilon \to 1^-$, we arrive ay
\[
\int_X(v-u)^p\omega_v\wedge T\leq \int_X(v-u)^p\omega_v\wedge T.
\]
Repeating the above argument $m$-times we get
\[
\int_{X}(v-u)^p\HH(v)\leq \int_{X}(v-u)^p\HH(u).
\]
(2) This part follows now in a similar manner as  in Proposition~\ref{cp xing}.
\end{proof}

\begin{corollary}\label{decr}
Let $u,u_j,v\in \mathcal E_{p,m}(X,\omega)$.
\begin{enumerate}
\item If $u\leq v$, then
\[
2\int_X(u-v)^p\HH(v)\leq \Ip(u,v)^{p+m}\leq 2\int_X(u-v)^p\HH(u).
\]
\item If $u_j\searrow u$, $j\to \infty$, then $\Ip(u_j,u)\to 0$.
\end{enumerate}
\end{corollary}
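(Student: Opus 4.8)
The plan is to deduce both parts from the Xing-type inequality already established in the manifold setting (Proposition~\ref{cp xing_m}), exactly mirroring the arguments that gave Proposition~\ref{basic}(6) and Proposition~\ref{monotone} in the $\mathbb C^n$ case.

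For part (1), since $u\leq v$ we have $|u-v|=v-u$, so the definition of $\Ip$ splits as
\[
\Ip(u,v)^{p+m}=\int_X(v-u)^p\HH(u)+\int_X(v-u)^p\HH(v).
\]
By Proposition~\ref{cp xing_m}(1) the second integral is dominated by the first. Discarding the smaller term in favour of the larger (and vice versa) then yields both estimates simultaneously: replacing $\int_X(v-u)^p\HH(u)$ by the smaller $\int_X(v-u)^p\HH(v)$ gives the lower bound $2\int_X(v-u)^p\HH(v)$, while replacing $\int_X(v-u)^p\HH(v)$ by the larger $\int_X(v-u)^p\HH(u)$ gives the upper bound $2\int_X(v-u)^p\HH(u)$. (Here I read $(u-v)^p$ in the statement as $|u-v|^p=(v-u)^p$, consistent with the definition of $\Ip$.)

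For part (2), I would invoke the symmetry $\Ip(u_j,u)=\Ip(u,u_j)$ together with part (1). Since $u_j\searrow u$ we have $u\leq u_j$, so the upper bound in (1) gives
\[
\Ip(u_j,u)^{p+m}\leq 2\int_X(u_j-u)^p\HH(u).
\]
It then remains to pass to the limit on the right. Because $u\leq u_j\leq u_1\leq 0$, we have the pointwise domination $0\leq(u_j-u)^p\leq(-u)^p$, and $(u_j-u)^p$ decreases to $0$ wherever $u>-\infty$. The finiteness $\int_X(-u)^p\HH(u)<\infty$ built into membership in $\mathcal E_{p,m}(X,\omega)$ forces $\HH(u)(\{u=-\infty\})=0$, since $(-u)^p\equiv+\infty$ there; hence $(u_j-u)^p\to0$ holds $\HH(u)$-almost everywhere, dominated by the $\HH(u)$-integrable majorant $(-u)^p$. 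The dominated convergence theorem then sends the right-hand side to $0$, giving $\Ip(u_j,u)\to0$.

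The only step requiring any genuine care is this last passage to the limit, namely producing a fixed $\HH(u)$-integrable majorant for the decreasing family $(u_j-u)^p$. The inequality $u\leq u_j\leq0$ supplies $-u$ as such a majorant directly, so no appeal to the H\"older-type estimate is needed; the defining finiteness of $e_{p,m}(u)$ does all the work. I therefore expect no substantial obstacle: both parts are short consequences of Proposition~\ref{cp xing_m} and standard convergence theorems.
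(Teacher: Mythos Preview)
Your proposal is correct and follows essentially the same route the paper takes: part (1) is the direct analogue of Proposition~\ref{basic}(6), obtained immediately from Proposition~\ref{cp xing_m}(1), and part (2) mirrors Proposition~\ref{monotone}, bounding $\Ip(u_j,u)^{p+m}$ by $2\int_X(u_j-u)^p\HH(u)$ and passing to the limit. The only cosmetic difference is that the paper invokes the monotone convergence theorem (since $(u_j-u)^p$ decreases to $0$) while you use dominated convergence with majorant $(-u)^p$; both are valid here.
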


We end this paper with the main result of the compact K\"ahler case.

\begin{theorem}\label{thmKah_comp}
The pair $(\mathcal E_{p,m}(X,\omega),\Ip)$ is a complete quasimetric space.
\end{theorem}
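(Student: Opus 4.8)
The plan is to mirror, as closely as possible, the proof of completeness in the $\mathbb C^n$ case (Theorem~\ref{comp}), since the compact K\"ahler setting has already been equipped with all the structural analogues we need: the quasi-triangle inequality (Theorem~\ref{thmKah_quasimetric}), the Xing-type estimate (Proposition~\ref{cp xing_m}), and the monotone estimates of Corollary~\ref{decr}. So I would begin with an arbitrary Cauchy sequence $\{\varphi_j\}\subset \mathcal E_{p,m}(X,\omega)$ and, after passing to a subsequence, arrange that $\Ip(\varphi_j,\varphi_{j+1})\leq \frac{1}{3(2C)^{j+3}}$, where $C$ is the quasi-triangle constant. Using a bounded (hence continuous, or at least canonical) approximation $u_j=\max(\varphi_j,-k(j))$ together with the decreasing continuity from Corollary~\ref{decr}(2), I would replace $\varphi_j$ by bounded functions $u_j$ with $\Ip(u_j,\varphi_j)\leq \frac{1}{3(2C)^{j+3}}$, so that $\{u_j\}$ is again Cauchy with $\Ip(u_j,u_{j+1})\leq (2C)^{-(j+1)}$.

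Next I would introduce the increasing-in-$k$ maxima $v_{j,k}=\max(u_j,\dots,u_k)$ and run the same telescoping estimate as in~\eqref{5} to obtain $\Ip(u_j,v_{j,k})\leq 2^{-j}$, uniformly in $k$. The only genuinely new ingredient required here is the counterpart of Proposition~\ref{basic}(9) (that $\Ip(u_j,\max(u_j,v_{j+1,k}))\leq \Ip(u_j,v_{j+1,k})$); this follows from the energy additivity $\Ip(u,v)^{p+m}=\Ip(u,\max(u,v))^{p+m}+\Ip(v,\max(u,v))^{p+m}$, which in turn rests on the fact (from~\cite{LN}) that $\HH(\max(u,v))=\HH(u)$ on $\{u>v\}$. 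I would state this as the manifold analogue of Proposition~\ref{basic}, either invoking it or deriving the one or two items actually used. From the uniform bound on $\Ip(0,u_j)$ (the analogue of~\eqref{4}) and $\Ip(u_j,v_{j,k})$ I would deduce a uniform energy bound $\sup_k e_{p,m}(v_{j,k})<\infty$; here one must be slightly careful because on $X$ the normalization $\int_X \HH(u)=1$ is fixed, so the relevant quantity controlling membership in $\mathcal E_{p,m}(X,\omega)$ is the $(p,m)$-energy $\int_X(-v_{j,k})^p\HH(v_{j,k})$, and I would use the Guedj--Lu--Zeriahi / Lu--Nguyen type energy estimates cited in the section to conclude $v_j:=(\lim_k v_{j,k})^*\in \mathcal E_{p,m}(X,\omega)$.

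Finally, $\{v_j\}$ decreases to $u:=(\limsup_j u_j)^*\in \mathcal E_{p,m}(X,\omega)$, and I would establish $\Ip(u_j,v_j)\to 0$ by the lower-semicontinuity argument of~\eqref{7}: the term $\int_X(v_{j,k}-u_j)^p\HH(u_j)$ passes to the limit in $k$ by monotone convergence, while the term $\int_X(v_{j,k}-u_j)^p\HH(v_{j,k})$ is handled by the weak convergence of the Hessian measures together with the lower-semicontinuity Proposition~\ref{weakconvergence} (which is stated abstractly and applies verbatim on $X$). Combined with $\Ip(u_j,v_{j,k})\leq 2^{-j}$ this forces $\Ip(u_j,v_j)\to 0$, and then the quasi-triangle inequality applied to $\Ip(\varphi_j,u)\leq C^2(\Ip(u_j,v_j)+\Ip(v_j,u)+\Ip(u_j,\varphi_j))$ gives $\varphi_j\to u$, proving completeness.

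I expect the main obstacle to be the verification that the limiting functions $v_j$ and $u$ genuinely lie in $\mathcal E_{p,m}(X,\omega)$ rather than just in $\mathcal{SH}_m(X,\omega)$: unlike in $\mathbb C^n$, where $m$-hyperconvexity and the boundary-vanishing condition make the class robust under the relevant limits, on a compact manifold one must control the energy through the full nonlinear machinery (the approximation theorem and the energy/capacity estimates from~\cite{ACchar,DL,GLZ,LN,LN2}), and ensure the weak convergence $\HH(v_{j,k})\to\HH(v_j)$ needed for the lower-semicontinuity step actually holds for these increasing, uniformly energy-bounded sequences. Everything else should transcribe directly from the proof of Theorem~\ref{comp}.
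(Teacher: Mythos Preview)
Your proposal is correct and follows exactly the approach the paper takes: the paper's own proof simply says that completeness ``can be proved in exactly the same way as in Theorem~\ref{comp},'' and you have faithfully transcribed that argument to the compact K\"ahler setting, identifying the correct substitutes (canonical cut-offs in place of the continuous approximants from~\cite{ACH}, Corollary~\ref{decr} in place of Proposition~\ref{monotone}, and the energy estimates from~\cite{ACchar,DL,GLZ,LN,LN2}). The only minor slip is a labeling one---the inequality $\Ip(u_j,\max(u_j,v_{j+1,k}))\leq \Ip(u_j,v_{j+1,k})$ is the analogue of Proposition~\ref{basic}(5), not~(9)---but your derivation of it from the additivity~(4) is correct.
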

\begin{proof}
Theorem~\ref{thmKah_quasimetric} gives that $(\mathcal E_{p,m}(X,\omega),\Ip)$ is a quasimetric space, and  the completeness can be proved in exactly the same way as in Theorem~\ref{comp}.
\end{proof}


\begin{thebibliography}{99}

\bibitem{SA}  Abdullaev B. I., Sadullaev A., Potential theory in the class of $m$-subharmonic functions. Proc. Steklov Inst. Math.  279  (2012),  no. 1, 155-180.

\bibitem{AS2}  Abdullaev B. I., Sadullaev A.,  Capacities and Hessians in the class of $m$-subharmonic functions. Dokl. Math.  87  (2013),  no. 1, 88-90.

\bibitem{czyz_energy} \AA hag P., Czy\.z R., Ph\d{a}m H.H., Concerning the energy class $\Ep$ for $0<p<1$. Ann. Polon. Math. 91 (2007), no. 2-3, 119-130.

\bibitem{mod} \AA hag P., Czy\.z R., Modulability and duality of certain cones in pluripotential theory. J. Math. Anal. Appl. 361 (2010), no. 2, 302-321.

\bibitem{ACH} \AA hag P., Czy\.z R., Hed L., The geometry of $m$-hyperconvex domains. J. Geom. Anal. 28 (2018), no. 4, 3196-3222.

\bibitem{AC2020} \AA hag P., Czy\.z R., Poincar\'e-- and Sobolev-- type inequalities for complex $m$-Hessian equations. Results Math. 75 (2020), no. 2, Paper No. 63, 21 pp.

\bibitem{ACchar} \AA hag P., Czy\.z R., A characterization of the degenerate complex Hessian equations for functions with bounded $(p,m)$-energy.
    Manuscript (2020), arXiv:2003.06157.

\bibitem{ACmetric}  \AA hag P., Czy\.z R.,  Geodesics in the space of $m$-subharmonic functions with bounded energy. Manuscript (2021), arXiv: .

\bibitem{BedfordTaylor78} Bedford E., Taylor B.A., Variational properties of the complex Monge-Amp\`ere equation. I. Dirichlet principle. Duke Math. J. 45 (1978), no. 2, 375-403.

\bibitem{BedfordTaylor79} Bedford E., Taylor B.A., Variational properties of the complex Monge-Amp\`re equation. II. Intrinsic norms. Amer. J. Math. 101 (1979), no. 5, 1131-1166.

\bibitem{BBEGZ19} Berman J., Boucksom S., Eyssidieux P., Guedj V., Zeriahi A., K\"ahler-Einstein metrics and the K\"ahler-Ricci flow on log Fano varieties. J. Reine Angew. Math. 751 (2019), 27-89.


\bibitem{Blocki} B\l ocki Z., Weak solutions to the complex Hessian equation. Ann. Inst. Fourier (Grenoble) 55 (2005), no. 5, 1735-1756.

\bibitem{CalabuigFalcianiSancez} Calabuig J.M., Falciani H., S\'{a}nchez-P\'{e}rez E.A., Dreaming machine learning: Lipschitz extensions for reinforcement
learning on financial markets. Neurocomputing 398 (2020), 172-184.

\bibitem{CNS85}  Caffarelli L., Nirenberg L., Spruck J., The Dirichlet problem for nonlinear second-order elliptic equations. III. Functions of the eigenvalues of the Hessian. Acta Math. 155 (1985), no. 3-4, 261-301.

\bibitem{cegrell_pc} Cegrell U., Pluricomplex energy. Acta Math. 180 (1998), no. 2, 187–217.

\bibitem{CK}  Cegrell U., Ko\l odziej S., The equation of complex Monge-Ampère type and stability of solutions. Math. Ann. 334 (2006), no. 4, 713-729.

\bibitem{CP} Cegrell U., Persson L., An energy estimate for the complex Monge-Amp\`{e}re operator. Ann. Polon. Math. 67 (1997), no. 1, 95-102.


\bibitem{ChernLevineNirenberg} Chern S.S., Levine H.I., Nirenberg L., Intrinsic norms on a complex manifold. 1969 Global Analysis (Papers in Honor of K. Kodaira), pp. 119-139, Univ. Tokyo Press, Tokyo.

\bibitem{C} Czy\.z R., The complex Monge-Amp\`{e}re operator in the Cegrell classes. Dissertationes Math. 466 (2009), 83 pp.

\bibitem{CAV} Czy\.z R., A note on Le-Pham's paper--convergence in $\mathcal E_p$ spaces.  Acta Math. Vietnam. 34 (2009), no. 3, 401-410.

\bibitem{DL} Dinew S. Lu H.-C., Mixed Hessian inequalities and uniqueness in the class $\mathcal E(X,\omega,m)$. Math. Z. 279 (2015), no. 3-4, 753-766.

\bibitem{NessaLu} Di Nezza E., Lu C.-H., $L^p$  metric geometry of big and nef cohomology classes. Acta Math. Vietnam. 45 (2020), no. 1, 53-69.

\bibitem{Gaveau1} Gaveau B., M\'ethodes de contr\^ole optimal en analyse complexe; r\'esolution d'\'equations de Monge-Amp\`ere.  C. R. Acad. Sci. Paris S\'er. A-B 284 (1977), no. 11, A593-A596.

\bibitem{Gaveau2} Gaveau B., M\'ethodes de contr\^ole optimal en analyse complexe. I. R\'esolution d'\'equations de Monge-Amp\`ere.  J. Functional Analysis 25 (1977), no. 4, 391-411.

\bibitem{GZ07} Guedj V., Zeriahi A., The weighted Monge-Amp\`{e}re energy of quasiplurisubharmonic functions. J. Funct. Anal. 250 (2007), no. 2, 442-482.

\bibitem{GZbook} Guedj V., Zeriahi A., Degenerate complex Monge-Amp\`{e}re. EMS Tracts in Mathematics, 26. European Mathematical Society (EMS), Z\"urich, 2017

\bibitem{GLZ}  Guedj V., Lu H.-C., Zeriahi A., Plurisubharmonic envelopes and supersolutions. J. Differential Geom. 113 (2019), no. 2, 273-313.


\bibitem{H}  Heinonen J., Lectures on analysis on metric spaces. Universitext. Springer-Verlag, New York, 2001.

\bibitem{JakelScholkopfWichmann} J\"akel F., Sch\"olkopf B., Wichmann F. A., Similarity, kernels, and the triangle inequality. J. Math. Psych. 52 (2008), no. 5, 297-303.

\bibitem{Kalina} Kalina J., Some remarks on variational properties of inhomogeneous complex Monge-Amp\`ere equation. Bull. Polish Acad. Sci. Math. 31 (1983), no. 1-2, 9-14.

\bibitem{L} Lu H.-C., Complex Hessian equations. Doctoral thesis. University of Toulouse III Paul Sabatier, 2012.

\bibitem{L3} Lu H.-C., A variational approach to complex Hessian equations in $\mathbb C^n$. J. Math. Anal. Appl. 431 (2015), no. 1, 228-259.

\bibitem{LN} Lu H.-C., Nguyen V.-D., Degenerate complex Hessian equations on compact K\"ahler manifolds. Indiana Univ. Math. J. 64 (2015), no. 6, 1721-1745.

\bibitem{LN2} Lu H.-C., Nguyen V.-D., Complex Hessian equations with prescribed singularity on compact K\"ahler manifolds.  Ann. Sc. Norm. Super. Pisa Cl. Sci. (accepted). Manuscript (2019), arXiv:1909.02469.

\bibitem{thien} Nguyen V.T. , On delta $m$-subharmonic functions. Ann. Polon. Math. 118 (2016), no. 1, 25-49.

\bibitem{thien2} Nguyen V.T., Maximal $m$-subharmonic functions and the Cegrell class $\mathcal N_m$. Indag. Math. (N.S.) 30 (2019), no. 4, 717-739.

\bibitem{persson} Persson L., A Dirichlet principle for the complex Monge-Amp\`{e}re operator. Ark. Mat. 37 (1999), no. 2, 345–356.

\bibitem{rashkovskii}  Rashkovskii A.,  Local geodesics for plurisubharmonic functions. Math. Z. 287 (2017), no. 1-2, 73-83.

\bibitem{rashkovskii2021}  Rashkovskii A., Rooftop envelopes and residual plurisubharmonic functions. Manuscript (2021), arXiv:2103.14514.

\bibitem{shepard} Shepard R. N., Attention and the metric structure of the stimulus space. J. Math. Psych. 1 (1964), no. 1, 54-87.

\bibitem{X}  Xia Q., The geodesic problem in quasimetric spaces. J. Geom. Anal. 19 (2009), no. 2, 452-479.

\bibitem{Xing} Xing Y., A strong comparison principle for plurisubharmonic functions with finite pluricomplex energy. Michigan Math. J. 56 (2008), no. 3, 563-581.

\bibitem{vinacua1} Vinacua A.,  Nonlinear elliptic equations written in terms of functions of the eigenvalues of the complex Hessian.
Doctoral thesis, New York University, 1986.

\bibitem{vinacua2} Vinacua A., Nonlinear elliptic equations and the complex Hessian. Comm. Partial Differential Equations 13 (1988), no. 12, 1467-1497.

\bibitem{WanWang2} Wan D., Wang W., Complex Hessian operator and Lelong number for unbounded $m$-subharmonic functions. Potential Anal. 44 (2016), no. 1, 53-69.

\bibitem{Zolotarev} Zolotarev V. M., Metric distances in spaces of random variables and of their distributions. Mat. Sb. (N.S.) 101 (143) (1976), no. 3, 416-454, 456.
\end{thebibliography}
\end{document}